\newcommand{\ZZ}{\mathbb{Z}}
\newcommand{\NN}{\mathbb{N}}
\newcommand{\RR}{\mathbb{R}}
\newcommand{\calR}{\mathcal{R}}
\newcommand{\calC}{\mathcal{C}}
\newcommand{\calX}{\mathcal{X}}
\newcommand{\calD}{\mathcal{D}}
\newcommand{\frakX}{\mathfrak{X}}
\newcommand{\frakB}{\mathfrak{B}}
\newcommand{\scrD}{\mathscr{D}}
\newcommand{\vphi}{\varphi}
\newcommand{\Id}{\operatorname{Id}}
\newcommand{\pl}[1]{\foreignlanguage{polish}{#1}}
\newcommand{\norm}[1]{\lVert {#1} \rVert}
\newcommand{\abs}[1]{\lvert {#1} \rvert}
\newcommand{\tr}{\operatorname{tr}}
\newcommand{\GL}{\operatorname{GL}}
\newcommand{\discr}{\operatorname{discr}}
\newcommand{\sym}{\operatorname{sym}}
\newcommand{\sinc}{\operatorname{sinc}}
\newcommand{\ud}{{\: \rm d}}
\newcommand{\supp}{\operatornamewithlimits{supp}}
\newtheorem{theorem}{Theorem}
\newtheorem{proposition}{Proposition}
\newtheorem{lemma}{Lemma}
\newtheorem{corollary}{Corollary}
\newtheorem{claim}{Claim}
\newtheorem{conjecture}{Conjecture}
\newtheorem*{theorem*}{Theorem}
\theoremstyle{definition}
\newtheorem{example}{Example}
\newtheorem{remark}{Remark}
\numberwithin{equation}{section}
\theoremstyle{plain}
\newcounter{thm}
\newtheorem{main_theorem}[thm]{Theorem}
\newcounter{cor}
\title[Asymptotic behaviour of Christoffel--Darboux kernel]
{Asymptotic behaviour of Christoffel--Darboux kernel via three-term recurrence relation II}
\author{Grzegorz Świderski}
\address{
	\pl{
		Grzegorz \'Swiderski \\
		Department of Mathematics \\
		KU Leuven \\
		Celestijnenlaan 200B box 2400 \\
		BE-3001 Leuven \\
		Belgium \&
		Mathematical Institute \\
		University of Wroc\l{}aw \\
		pl. Grunwaldzki 2/4 \\
		50-384 Wroc\l{}aw \\
		Poland
	}
}
\email{grzegorz.swiderski@kuleuven.be}
\author{Bartosz Trojan}
\address{
	\pl{
	Bartosz Trojan\\
	Institute of Mathematics\\
	Polish Academy of Sciences\\
	ul. \'Sniadeckich 8\\
	00-696 Warszawa\\
	Poland}
}
\email{btrojan@impan.pl}
\keywords{Orthogonal polynomials, asymptotics, Tur\'an determinants, Christoffel functions, scaling limits}
\subjclass[2010]{Primary: 42C05, 47B36.}
\begin{document}
\selectlanguage{english}

\begin{abstract}
We study orthogonal polynomials with periodically modulated Jacobi parameters in the case when $0$ lies on the soft edge
of the spectrum of the corresponding periodic Jacobi matrix. We determine when the orthogonality measure is absolutely
continuous and we provide a constructive formula for it in terms of the limit of Tur\'an determinants. We next consider
asymptotics of the solutions of associated second order difference equation. Finally, we study scaling limits of the
Christoffel--Darboux kernel.
\end{abstract}

\maketitle

\section{Introduction}
Let $\mu$ be a probability measure on the real line with infinite support such that for every $n \in \NN_0$,
\[
	\text{the moments} \quad \int_{\RR} x^n \ud \mu(x) \quad \text{are finite}.
\]
Let $L^2(\RR, \mu)$ be the Hilbert space of square-integrable functions equipped with the scalar product
\[
	\langle f, g \rangle = \int_\RR f(x) \overline{g(x)} \ud \mu(x).
\]
By performing on the sequence of monomials $(x^n : n \in \NN_0)$ the Gram--Schmidt orthogonalization process 
one obtains the sequence of polynomials $(p_n : n \in \NN_0)$ satisfying
\begin{equation} \label{eq:1a}
	\langle p_n, p_m \rangle = \delta_{nm}
\end{equation}
where $\delta_{nm}$ is the Kronecker delta. Moreover, $(p_n : n \in \NN_0)$ satisfies the following recurrence relation
\begin{equation}
	\label{eq:100}
	\begin{aligned} 
	p_0(x) &= 1, \qquad p_1(x) = \frac{x - b_0}{a_0}, \\
	x p_n(x) &= a_n p_{n+1}(x) + b_n p_n(x) + a_{n-1} p_{n-1}(x), \qquad n \geq 1
	\end{aligned}
\end{equation}
where
\[
	a_n = \langle x p_n, p_{n+1} \rangle, \qquad
	b_n = \langle x p_n, p_n \rangle, \qquad n \geq 0.
\]
Notice that for every $n$, $a_n > 0$ and $b_n \in \RR$. The pair $(a_n)$ and $(b_n)$ is called the
\emph{Jacobi parameters}. One of the central object of this article is the \emph{Christoffel--Darboux} kernel $K_n$,
which is defined as
\[
	K_n(x, y) = \sum_{j=0}^n p_j(x) p_j(y).
\]
The classical topic in analysis is studying the asymptotic behavior of orthogonal polynomials $(p_n)$, which allows
to find the asymptotic of Christoffel--Darboux kernel. To motivate the interest in Christoffel--Darboux kernel
see surveys \cite{Lubinsky2016} and \cite{Simon2008}.

The case when the measure $\mu$ has compact support is well understood. For the asymptotics of the polynomials 
see e.g. the monograph \cite{StahlTotik1992} where the classical potential theory is the basic tool. Nowadays, the
so-called \emph{Riemann--Hilbert method} is commonly used to derive precise asymptotics of the orthogonal polynomials as
well as its Jacobi parameters, see e.g. \cite{Deift1999, Kuijlaars2004} and the book \cite{DeiftRH}. However, this method
demands stronger regularity conditions than those imposed in \cite{StahlTotik1992}. One of the most general result
concerning the Christoffel--Darboux kernel has been proven in \cite{Totik2009}.  Namely, if $I$ is an open
interval contained in $\supp(\mu)$, so that $\mu$ is absolutely continuous on $I$ with continuous positive density
$\mu'$, then
\begin{equation} 
	\label{eq:97}
	\lim_{n \to \infty} \frac{1}{n} K_n \Big( x + \frac{u}{n}, x + \frac{v}{n} \Big) = 
	\frac{\omega'(x)}{\mu'(x)} \sinc \big( (u-v) \pi \omega'(x) \big)
\end{equation}
locally uniformly with respect to $x \in I$ and $u, v \in \RR$, provided that $\mu$ is \emph{regular} 
(see \cite[Definition 3.1.2]{StahlTotik1992}). In the formula \eqref{eq:97}, $\omega'$ denotes the density of the
\emph{equilibrium measure} corresponding to the support of $\mu$, see \eqref{eq:96} for details. In the case when
$\supp(\mu)$ is a finite union of compact intervals, $\mu$ is regular provided that $\mu' > 0$ almost everywhere
in the interior of $\supp(\mu)$. Let us recall that
\[
	\sinc(x) = 
	\begin{cases}
		\frac{\sin(x)}{x} & \text{if } x \neq 0, \\
		1 & \text{otherwise.}
	\end{cases}
\]

The best understood class of measures with unbounded support is the class of \emph{exponential weights}.
In the monograph \cite{Levin2001} asymptotics of the polynomials as well as their Jacobi parameters were studied under
a number of regularity conditions imposed on the function $Q(x) = -\log \mu'(x)$. Concerning the Christoffel--Darboux
kernel, it was recently proven in \cite{Ignjatovic2017} that under some conditions
\begin{equation} 
	\label{eq:108}
	\lim_{n \to \infty} \frac{1}{\rho_n} K_n(x,x) = \frac{1}{2 \pi \mu'(x)}
\end{equation}
locally uniformly with respect to $x \in \RR$ where
\[
	\rho_n = \sum_{j=0}^n \frac{1}{a_j}.
\]
Let us comment that $\rho_n$ is comparable to $n$, if the sequences $(a_n)$ and $(a_n^{-1})$ are bounded.
In particular, the conditions imposed on $Q$ imply that the density of $\mu$ is an even, everywhere positive and continuously differentiable function.

Instead of taking the measure $\mu$ as the starting point one can consider polynomials $(p_n : n \in \NN_0)$ satisfying
the three-term recurrence relation \eqref{eq:100} for a given sequences $(a_n)$ and $(b_n)$ such that $a_n > 0$ and
$b_n \in \RR$. In view of the Favard's theorem (see, e.g. \cite[Theorem 5.10]{Schmudgen2017}), there is a probability
measure $\nu$ such that $(p_n)$ is orthonormal in $L^2(\RR, \nu)$. The measure $\nu$ is unique, if and only if there
is exactly one measure with the same moments as $\nu$. In such a case we call $\nu$ \emph{determinate} and denote it by
$\mu$. Otherwise $\nu$ is \emph{indeterminate}. For example, the determinacy of $\nu$ is implied by the
\emph{Carleman condition}
\begin{equation}
	\label{eq:103}
	\sum_{n=0}^\infty \frac{1}{a_n} = \infty
\end{equation}
(see, e.g. \cite[Corollary 6.19]{Schmudgen2017}). However, the condition \eqref{eq:103} is not necessary. Let us recall
that the orthogonality measure has compact support, if and only if the Jacobi parameters are bounded.

In the setup when the Jacobi parameters are central objects, the questions concerning asymptotic behavior of orthogonal
polynomials and the Christoffel--Darboux kernel make a perfect sense. Additionally, if the measure $\nu$ is determinate,
one can ask how to approximate it.

In this article we are exclusively interested in \emph{unbounded} Jacobi parameters. We shall mostly consider
the class of periodically modulated sequences. This class has been introduced in \cite{JanasNaboko2002}
and systematically studied since then. To be more precise, let $N$ be a positive integer. We say that Jacobi parameters 
$(a_n)$ and $(b_n)$ are \emph{$N$-periodically modulated} if there are two $N$-periodic sequences $(\alpha_n : n \in \ZZ)$
and $(\beta_n : n \in \ZZ)$ of positive and real numbers, respectively, such that
\begin{enumerate}[(a)]
	\item
	$\begin{aligned}[b]
	\lim_{n \to \infty} a_n = \infty
	\end{aligned},$
	\item
	$\begin{aligned}[b]
	\lim_{n \to \infty} \bigg| \frac{a_{n-1}}{a_n} - \frac{\alpha_{n-1}}{\alpha_n} \bigg| = 0
	\end{aligned},$
	\item
	$\begin{aligned}[b]
	\lim_{n \to \infty} \bigg| \frac{b_n}{a_n} - \frac{\beta_n}{\alpha_n} \bigg| = 0
	\end{aligned}.$
\end{enumerate}
It turns out that properties of $\mu$ corresponding to $N$-periodically modulated Jacobi parameters are intimately related
to the trace of the matrix
\[
	\frakX_1(x) = \prod_{j=1}^N 
	\begin{pmatrix}
		0 & 1 \\
		-\frac{\alpha_{j-1}}{\alpha_j} & \frac{x-\beta_j}{\alpha_j}
	\end{pmatrix}.
\]
More precisely, under some regularity assumptions imposed on the Jacobi parameters, the measure $\mu$ is purely absolutely
continuous on $\RR$ with positive continuous density when $|\tr \frakX_1(0)| < 2$
(see \cite{PeriodicII, SwiderskiTrojan2019}), whereas $\mu$ is purely discrete when $|\tr \frakX_1(0)| > 2$ 
(see \cite{JanasNaboko2002}). In the boundary case $|\tr \frakX_1(0)| = 2$, we have two possibilities:
either the matrix $\frakX_1(0)$ is diagonalizable (which implies that $\frakX_1(0) = \sigma \Id$ for some
$\sigma \in \{-1, 1\}$), or it is similar to a non-trivial Jordan block. In this article we are concerned with the first
case. The second one is more challenging and we leave it for future research.

In the case $|\tr \frakX_1(0)| < 2$ the asymptotic behavior of $(p_n : n \in \NN_0)$ has already been studied in
\cite{SwiderskiTrojan2019}, and the scaling limit of $K_n$ has been recently obtained in \cite{ChristoffelI}.
In particular, it has been shown that under some regularity assumptions one has
\begin{equation} 
	\label{eq:109}
	\lim_{n \to \infty} \frac{1}{\rho_n} K_n \Big( x + \frac{u}{\rho_n}, x + \frac{v}{\rho_n} \Big) =
	\frac{\omega'(0)}{\mu'(x)} \sinc \big( (u-v) \pi \omega'(0) \big)
\end{equation}
locally uniformly with respect to $x,u,v \in \RR$, where
\begin{equation} \label{eq:110}
	\omega'(0) = \frac{|\tr \frakX_1'(0)|}{N \pi \sqrt{4 - \big( \tr \frakX_1(0) \big)^2}},
\end{equation}
and
\[
	\rho_n = \sum_{j = 0}^n \frac{\alpha_j}{a_j}.
\]
Notice that by taking $\alpha_n \equiv 1$, $\beta_n \equiv 0$, $N = 1$, and $u = v = 0$, we can reproduce \eqref{eq:108}.

Let us emphasize that $|\tr \frakX_1(0)| = 2$ when $\frakX_1(0) = \sigma \Id$. This situation lies on the boundary of the
previous case. In particular, the formula \eqref{eq:110} is not well-defined, and consequently, it is not clear whether
any analogue of \eqref{eq:109} holds true. Moreover, the behavior of the corresponding measure is different than in the
case $|\tr \frakX_1(0)| < 2$. In fact, under some regularity conditions, there is an explicit compact interval
$I \subset \RR$ such that the measure $\mu$ is purely absolutely continuous on $\RR \setminus I$ with continuous positive
density, and it is discrete on $I$ (see \cite{PeriodicIII}). Moreover, in our forthcoming article \cite{Discrete} we have
shown that in fact the support of $\mu$ has no accumulation points in the interior of $I$. All of this suggest that
the asymptotic behavior of the polynomials and the Christoffel--Darboux kernel might be different in this setup.
A very natural example satisfying our theorems are the following Jacobi parameters
\[
	a_n = (n+1)^{\kappa} + \frac{1}{2}\big( (-1)^n + 1\big), \qquad b_n \equiv 0,
\]
for $\kappa \in (0,1)$ and $N = 2$. In view of \cite{Dombrowski2004}, the measure $\mu$ is absolutely continuous on
$\RR$ and has support equal to $\RR \setminus (-1, 1)$.

Before we go further, let us introduce some terminology. A sequence $(u_n \in \NN_0)$ is \emph{generalized 
eigenvector} associated with $x \in \RR$, if it satisfies the recurrence relation
\begin{equation}
	\label{eq:28}
	x u_n = a_n u_{n+1} + b_n u_n + a_{n-1} u_{n-1}, \qquad n \geq 1,
\end{equation}
with some initial condition $(u_0, u_1) \neq (0, 0)$. The relation \eqref{eq:28} can be rewritten as
\begin{equation}
	\label{eq:136}
	\begin{pmatrix}
		u_n \\
		u_{n+1}
	\end{pmatrix}
	=
	B_n(x)
	\begin{pmatrix}
		u_{n-1} \\
		u_n
	\end{pmatrix}
\end{equation}
where $B_n$ is \emph{$1$-step transfer matrix} defined as
\[
	B_n(x) = 
	\begin{pmatrix}
		0 & 1 \\
		-\frac{a_{n-1}}{a_n} & 	\frac{x-b_n}{a_n}
	\end{pmatrix}.
\]
To study $N$-periodic modulations we consider $N$-step transfer matrix defined as
\[
	X_n = \prod_{j=n}^{n+N-1} B_j.
\]
By $\GL(2, \RR)$ we denote $2 \times 2$ 
real invertible matrices equipped with the spectral norm. For a matrix 
\[
	Y = 
	\begin{pmatrix}
	y_{1, 1} & y_{1, 2} \\
	y_{2, 1} & y_{2, 2}
	\end{pmatrix}
\]
we set $[Y]_{i, j} = y_{i, j}$; its discriminant is defined as $\discr Y = (\tr Y)^2 - 4 \det Y$.
Given a compact subset $K \subset \RR$, we say that the sequence $(Y_n : n \in \NN)$ of
mappings $Y_n : K \to \GL(2, \RR)$ belongs to $\calD_1 \big(K, \GL(2, \RR) \big)$, if
\[
	\sum_{n=0}^\infty \sup_{x \in K} \| Y_{n+1}(x) - Y_n(x) \| < \infty.
\]

It turns out that some properties of the measure $\mu$ depend on the asymptotic behavior of generalized eigenvectors.
For example, $\mu$ is determinate, if and only if there is a generalized eigenvector corresponding to $x\in \RR$ which
is not square-summable. Moreover, subordinacy theory (see e.g. \cite{Khan1992}) implies that, if $\mu$ is determinate
and $I \subset \RR$ is an open interval such that for any generalized eigenvectors $(u_n)$, $(v_n)$ associated with
$x \in I$,
\[
	\sup_{n \geq 0} \frac{\sum_{k=0}^n |u_k|^2}{\sum_{k=0}^n |v_k|^2} < \infty,
\]
then $\mu$ is absolutely continuous on $I$, and $I \subset \supp(\mu)$. This motivates the study of the asymptotic
behavior of generalized eigenvectors.
\begin{main_theorem}
	\label{thm:A}
	Let $N$ be a positive integer, and $i \in \{0, 1, \ldots, N-1 \}$. Suppose that
	\[
		\lim_{j \to \infty} a_{jN+i-1} = \infty.
	\]
	Let
	\[
		\Lambda^i = \Big\{ x \in \RR : \lim_{j \to \infty} \discr R_{jN+i}(x) \text{ exists and is negative} \Big\}
	\]
	where
	\[
		R_n(x) = a_{n+N-1} \big( X_n(x) - \sigma \Id \big)
	\]
	for some $\sigma \in \{-1, 1\}$. Suppose that $K \subset \Lambda^i$ is a compact interval with non-empty
	interior such that
	\[
		\big(X_{jN+i} : j \in \NN\big), \big(R_{jN+i} : j \in \NN\big) \in \calD_1 \big( K, \GL(2, \RR) \big).
	\]
	Then there is a constant $c>1$ such that for every generalized eigenvector $(u_n : n \in \NN_0)$ associated with
	$x \in K$, and all $n \geq 1$,
	\begin{equation} 
		\label{thmA:eq:1}
		c^{-1} \big( u_0^2 + u_1^2 \big) 
		\leq 
		a_{nN+i-1} \big( u_{nN+i-1}^2 + u_{nN+i}^2 \big)
		\leq
		c \big( u_0^2 + u_1^2 \big).
	\end{equation}
\end{main_theorem}
Theorem~\ref{thm:A} is a generalization of \cite[Theorem C]{PeriodicIII}, and is proven in Section \ref{sec:2}, see
Theorem \ref{thm:3}. If the hypotheses of Theorem \ref{thm:A} are satisfied for all $i \in \{0, 1, \ldots, N-1\}$,
the Carleman condition \eqref{eq:103}, with a help of subordinacy theory, implies that the bounds \eqref{thmA:eq:1} entail
the absolute continuity of the measure $\mu$ on each compact subset $K \subset \bigcap_{i=0}^{N-1} \Lambda^i$. However,
this method does not give any additional information about $\mu$. Because of this and inspired by earlier
work for bounded Jacobi parameters \cite{GeronimoVanAssche1991, Nevai1983}, in \cite{PeriodicII} the first author has
introduced a method in the case of unbounded Jacobi parameters that allows to approximate $\mu$ in terms of $N$-shifted
Tur\'an determinants. The later are defined as follows
\begin{align*}
	D^N_n(x) &= 
	\det
	\begin{pmatrix}
		p_{n+N-1}(x) & p_{n-1}(x) \\
		p_{n+N}(x) & p_n(x)
	\end{pmatrix} \\
	&=
	p_n(x) p_{n+N-1}(x) - p_{n-1}(x) p_{n+N}(x).
\end{align*}
See also \cite{PeriodicIII, SwiderskiTrojan2019} and the references therein.

Our second result concerns the convergence of $N$-shifted Tur\'an determinants.
\begin{main_theorem} 
	\label{thm:B}
	Suppose that the hypotheses of Theorem~\ref{thm:A} are satisfied. Assume further that
	\[
		\lim_{j \to \infty} \big( a_{(j+1)N+i-1} - a_{jN+i-1} \big) = 0.
	\]
	Then there is a positive function $g_i$, such that
	\begin{equation}
		\label{thmB:eq:1}
		\lim_{j \to \infty}
		\sup_{x \in K}
		\Big|
		a_{(j+1)N+i-1}^2 \big| D^N_{jN+i}(x) \big| - g_i(x)
		\Big|
		=0.
	\end{equation}
	Moreover, the measure $\mu$ is absolutely continuous on $K$ with the density
	\[
		\mu'(x) = \frac{\sqrt{-h_i(x)}}{2\pi g_i(x)}, \qquad x \in K
	\]
	where
	\begin{equation}
		\label{eq:12}
		h_i(x) = \lim_{n \to \infty} \discr \big( R_{nN+i}(x) \big), \qquad x \in K.
	\end{equation}
\end{main_theorem}
Again, Theorem~\ref{thm:B} is a generalization of \cite[Theorem D]{PeriodicIII}. Its proof is in Section \ref{sec:3},
see Theorem \ref{thm:4}. 

In the next theorem we study asymptotics of the polynomials in more detail.
\begin{main_theorem}
	\label{thm:C}
	Suppose that the hypotheses of Theorem~\ref{thm:B} are satisfied. Then there are $M \geq 1$ and a continuous
	real-valued function $\eta$ such that for all $n \geq M$,
	\begin{equation}
		\label{thm:C:eq:1}
		\lim_{n \to \infty} \sup_{x \in K} 
		\bigg|
	 	\sqrt{a_{nN+i-1}} p_{nN+i}(x) -
	 	\sqrt{\frac{2 \big|[\calR_i(x)]_{2,1}\big|}{\pi \mu'(x) \sqrt{-h_i(x)}}}
	 	\sin \Big( \sum_{j=M+1}^{n} \theta_j(x) + \eta(x) \Big)
		\bigg| = 0
	\end{equation}
	where $h_i$ is given by \eqref{eq:12},
	\[
		\calR_i(x) = \lim_{n \to \infty} R_{nN+i}(x),
	\]
	and
	\[
		\theta_j(x) = 
		\arccos \bigg( \frac{\tr X_{jN+i}(x)}{2 \sqrt{\det X_{jN+i}(x)}} \bigg).
	\]
\end{main_theorem}
Let us mention that asymptotics of the polynomials in the case when $(X_{nN+i} : n \in \NN)$ is convergent to the matrix
$\calX$ such that $\discr \calX < 0$, has been obtained in \cite{SwiderskiTrojan2019}. Theorem~\ref{thm:C} corresponds
to the case when $\discr \calX = 0$. For the proof of Theorem \ref{thm:C} we refer to Theorem \ref{thm:1}.

Finally, in the last theorem we study the analogues of \eqref{eq:109} for periodic modulations such that 
$\frakX_1(0) = \sigma \Id$ for some $\sigma	\in \{-1, 1 \}$. It is proven in Theorem \ref{thm:7}.
\begin{main_theorem} 
	\label{thm:D}
	Suppose that the hypotheses of Theorem~\ref{thm:B} are satisfied for all $i \in \{0, 1, \ldots, N-1 \}$. Assume
	further that Jacobi parameters $(a_n)$ and $(b_n)$ are $N$-periodically modulated so that $\frakX_1(0) = \sigma \Id$.
	Then
	\begin{equation}
		\label{thm:D:eq:1}
		\lim_{n \to \infty}
		\frac{1}{\rho_n} K_n\bigg(x + \frac{u}{\rho_n}, x + \frac{v}{\rho_n} \bigg)
		=
		\frac{\upsilon(x)}{\mu'(x)}
		\sinc \big( (u-v) \pi \upsilon(x) \big)
	\end{equation}
	locally uniformly with respect to $(x,u,v) \in \Lambda^0 \times \RR^2$, where
	\[
		\rho_n = \sum_{j=0}^n \frac{\alpha_j}{a_j}, 
		\qquad\text{and}\qquad
		\upsilon(x) = \frac{1}{4 \pi N \alpha_{N-1}} \frac{|h_0'(x)|}{\sqrt{-h_0(x)}}.
	\]
\end{main_theorem}
Observe that in \eqref{eq:109} the factor $\omega'(0)$ is constant whereas in \eqref{thm:D:eq:1}
the factor $\upsilon$ usually depends on $x \in \Lambda^0$. For cases when $\upsilon(x) = \omega'(0)$ for each $x \neq 0$
see Theorem~\ref{thm:2}. By taking $u = v = 0$ in Theorem~\ref{thm:D} we obtain sufficient conditions under which
Ignjatović's conjecture is valid, see Corollary~\ref{cor:2} for details. In general the conjecture is false.

Let us present some ideas of the proofs. In view of \cite[Theorem 1]{PeriodicI} it is enough to prove the uniform
convergence of generalized $N$-shifted Tur\'an determinants
\[
	a_{n+N-1}^2 
	\det
	\begin{pmatrix}
		u_{n+N-1} & u_{n-1} \\
		u_{n+N} & u_n
	\end{pmatrix}.
\]
We do so by careful analysis of $(X_{nN+i} : n \in \NN)$ following the method developed in \cite{PeriodicIII},
see Section~\ref{sec:2} for details. To prove Theorem \ref{thm:B}, we use the convergence of generalized $N$-shifted
Tur\'an determinants together with the approximation method described in \cite{PeriodicII}, see Section \ref{sec:3}.
To prove Theorem~\ref{thm:C} we follow the method recently introduced in \cite{SwiderskiTrojan2019}
for the case when the limit of the sequence $(X_{nN+i} : n \in \NN)$ has negative discriminant. In the current
setup the analysis is much more subtle and involved. Finally, to prove Theorem~\ref{thm:D} we use the asymptotics from
Theorem~\ref{thm:C}. It results in estimating the following oscillatory sum
\[
	\sum_{k=0}^n \frac{\gamma_k}{\sum_{j=0}^n \gamma_j}
	\sin \Big( \sum_{j=0}^n \theta_j(x_n) + \sigma(x_n) \Big)
	\sin \Big( \sum_{j=0}^n \theta_j(y_n) + \sigma(y_n) \Big)
\]
where 
\[
	x_n = x + \frac{u}{\rho_n}, \qquad\text{and}\qquad
	y_n = x + \frac{v}{\rho_n}.
\]
To deal with the sum we prove two auxiliary results (see Lemma \ref{lem:1} and Lemma \ref{lem:3}) that are 
valid for sequences not necessarily belonging to $\calD_1$. In Section~\ref{sec:6}, we show a number of necessary
algebraic identities which are specific to the periodic modulations in the setup $\frakX_1(0) = \sigma \Id$. 

The article is organized as follows: In Section \ref{sec:1} we fix some basic notation. Section \ref{sec:2} is devoted to
proving Theorem \ref{thm:A} (see Theorem \ref{thm:3}). In the next section we describe the approximation procedure 
(see Proposition \ref{prop:1}) which is a tool in proving Theorem \ref{thm:B} (see Theorem \ref{thm:4}). In Section
\ref{sec:4}, we study the asymptotic behavior of orthogonal polynomials. Behavior of the Christoffel function
in residue classes is analyzed in Section \ref{sec:5}. In the next section we define periodic modulations 
and introduce a function $\upsilon$. Section \ref{sec:7} is dedicated to study the Christoffel function for
periodic modulations. Finally, in Section \ref{sec:8} we investigate asymptotic behavior of the Christoffel--Darboux
kernel. Let us emphasize that Lemma \ref{lem:1} and Lemma \ref{lem:3} are sufficiently general to allow studying other 
types of scaling limits of Christoffel--Darboux kernels. In particular, in Theorem \ref{thm:11} it is shown how Lemma
\ref{lem:3} may be applied.

\subsection*{Notation}
By $\NN$ we denote the set of positive integers and $\NN_0 = \NN \cup \{0\}$. Throughout the whole article, we write $A \lesssim B$ if there is an absolute constant $c>0$ such that
$A\le cB$. Moreover, $c$ stands for a positive constant whose value may vary from occurrence to occurrence.

\subsection*{Acknowledgment}
The first author was partially supported by the Foundation for Polish Science (FNP) and by long term structural funding -- Methusalem grant of the Flemish Government.

\section{Preliminaries}
\label{sec:1}
Given Jacobi parameters $(a_n : n \in \NN_0)$ and $(b_n : n \in \NN_0)$ and $k \in \NN_0$, we define polynomials 
$(p_n^{[k]} : n \in \NN_0)$ by relations
\begin{align*}
	p_0^{[k]}(x) &= 1, \qquad p_1^{[k]}(x) = \frac{x-b_k}{a_k}, \\
	x p_n^{[k]}(x) &= a_{n+k-1} p_{n-1}^{[k]}(x) + b_{n+k} p_n^{[k]}(x) + a_{n+k} p_{n+1}^{[k]}(x), \qquad n \geq 1.
\end{align*}
We usually omit the superscript when $k = 0$. In particular, $(p_n(x) : n \in \NN_0)$ are the generalized eigenvectors
associated with $x$ satisfying the initial condition
\[
	p_0(x) = 1, \qquad p_1(x) = \frac{x-b_0}{a_0}.
\]
Given a compact set $K \subset \RR$ with non-empty interior, there is the unique probability measure $\omega_K$,
called the \emph{equilibrium measure} corresponding to $K$, minimizing the energy
\begin{equation} \label{eq:96}
	I(\nu) = -\int_\RR \int_\RR \log |x-y| \nu({\rm d} x) \nu({\rm d} y)
\end{equation}
among all probability measures $\nu$ supported on $K$. The measure $\omega_K$ is absolutely continuous on the interior
of $K$ with continuous density, see \cite[Theorem IV.2.5, pp. 216]{Saff1997}.

Let $r$ be a positive integer. We say that a sequence $(x_n : n \in \NN)$ of vectors from a normed space $V$ belongs
to $\calD_r(V)$, if it is bounded and for each $j \in \{1, \ldots, r\}$,
\[
	\sum_{n = 1}^\infty \big\| \Delta^j x_n \big\|^{\frac{r}{j}} < \infty
\]
where
\begin{align*}
	\Delta^0 x_n &= x_n, \\
	\Delta^j x_n &= \Delta^{j-1} x_n - \Delta^{j-1} x_{n-1}, \qquad j \geq 1.
\end{align*}
For a positive integer $N$, we say that
\[
	(x_n : n \in \NN) \in \calD_r^N(V)
\]
if for each $i \in \{0, 1, \ldots, N-1 \}$
\[
	(x_{nN+i} : n \in \NN ) \in \calD_r(V).
\]
If $Y$ is a matrix
\[
	Y = \begin{pmatrix}
		y_{1,1} & y_{1,2} \\
		y_{2,1} & y_{2,2}
	\end{pmatrix},
\]
we set $[Y]_{i, j} = y_{i,j}$. The symmetrization and the discriminant of $Y$ are defined as
\[
	\sym(Y) = \frac{1}{2} Y + \frac{1}{2} Y^*,
	\qquad\text{and}\qquad
	\discr Y = (\tr Y)^2 - 4 \det Y,
\]
respectively. Here $Y^*$ is the Hermitian transpose of the matrix $Y$.

\section{Tur\'an determinants}
\label{sec:2}
Let $N$ be a positive integer and let $(u_n : n \in \NN_0)$ be a generalized eigenvector associated with
$\alpha \in \RR^2 \setminus \{ 0 \}$ and $x \in \RR$. We define $N$-shifted generalized Tur\'an determinant by the
formula
\begin{equation}
	\label{eq:40}
	D_n(\alpha, x) = u_n(x) u_{n+N-1}(x) - u_{n-1}(x) u_{n+N}(x).
\end{equation}

\begin{theorem} 
	\label{thm:3}
	Let $N$ be a positive integer and $i \in \{0, 1, \ldots, N-1 \}$. Suppose that
	\[
		\lim_{j \to \infty} a_{jN+i-1} = \infty.
	\]
	Let
	\[
		\Lambda = \big\{ x \in \RR : \lim_{j \to \infty} \discr R_{jN+i}(x) \text{ exists and is negative} \big\}
	\]
	where
	\[
		R_n(x) = a_{n+N-1} ( X_n(x) - \sigma \Id )
	\]
	for some $\sigma \in \{-1, 1\}$. Suppose that $K \subset \Lambda$ is a compact interval with non-empty interior and
	$\Omega$ is a compact and connected subset of $\RR^2 \setminus \{0 \}$. If
	\[
		\big(X_{jN+i} : j \in \NN\big), \big(R_{jN+i} : j \in \NN\big) \in \calD_1 \big( K; \GL(2, \RR) \big),
	\]
	then there is $g$ a real continuous function without zeros on $\Omega \times K$ and a constant $c>0$ such that
	\begin{align*}
	&\sup_{\alpha \in \Omega} \sup_{x \in K}
	\big| a_{(k+1)N+i-1}^2 D_{kN+i}(\alpha, x) - g(\alpha, x) \big| \\
	&\qquad
	\leq c 
	\sum_{j=k}^\infty \Big( \sup_{x \in K} \big\| R_{(j+1)N+i}(x) - R_{jN+i}(x) \big\| 
	+ \sup_{x \in K} \big\| X_{(j+1)N+i}(x) - X_{jN+i}(x) \big\| \Big).
\end{align*}
\end{theorem}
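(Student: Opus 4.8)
The plan is to realize the Turán determinant as a definite quadratic form along the transfer‑matrix orbit and to exploit that this form is, up to a scalar, invariant under those matrices. Fix a generalized eigenvector $(u_n)$ with $(u_0,u_1)=\alpha$, write $\vec{w}_n=(u_{nN+i-1},u_{nN+i})$, so that $\vec{w}_{n+1}=X_{nN+i}(x)\vec{w}_n$, and put $c_n=a_{nN+i-1}$, $\delta_n=c_{n+1}/c_n$; since $\det B_j=a_{j-1}/a_j$ we get $\det X_{nN+i}=c_n/c_{n+1}=\delta_n^{-1}$. A direct computation gives $D_{nN+i}(\alpha,x)=-\vec{w}_n^{T}JX_{nN+i}(x)\vec{w}_n$ with $J=\bigl(\begin{smallmatrix}0&1\\-1&0\end{smallmatrix}\bigr)$. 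Writing $X_{nN+i}=\sigma\Id+c_{n+1}^{-1}R_{nN+i}$ and using $\vec{w}_n^{T}J\vec{w}_n=0$ turns this into
\[
	\Psi_n:=a_{(n+1)N+i-1}^2\,D_{nN+i}(\alpha,x)=-\vec{y}_n^{T}S_n(x)\,\vec{y}_n,\qquad
	\vec{y}_n=\sqrt{c_{n+1}}\,\vec{w}_n,\quad S_n=\sym\bigl(JR_{nN+i}\bigr).
\]
Since $\det S_n=-\tfrac14\discr R_{nN+i}$ converges uniformly on $K$ to $-\tfrac14 h_i$, and $h_i$ is continuous and negative on the compact $K$, there is $M$ (independent of $x$) such that for $n\ge M$ the symmetric matrix $S_n$ is definite with eigenvalues bounded above and bounded away from $0$, uniformly on $K$; in particular $|\vec{y}_n^{T}S_n\vec{y}_n|\asymp\|\vec{y}_n\|^2$.

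The key algebraic input is: for every $Y\in\GL(2,\RR)$ with traceless part $\widetilde Y=Y-\tfrac12(\tr Y)\Id$ one has
\[
	Y^{T}\bigl(J\widetilde Y\bigr)Y=\det(Y)\,J\widetilde Y,
\]
which is a two‑line check from $\widetilde Y^{\,2}=-\det(\widetilde Y)\Id$ and $Z^{T}JZ=\det(Z)J$. Because $R_{nN+i}=c_{n+1}X_{nN+i}-c_{n+1}\sigma\Id$ and $J$ is antisymmetric, $S_n=\sym(JR_{nN+i})=J\widetilde{R_{nN+i}}=c_{n+1}J\widetilde{X_{nN+i}}$; hence the identity gives the \emph{exact} relation $X_{nN+i}^{T}S_nX_{nN+i}=\det(X_{nN+i})S_n=\delta_n^{-1}S_n$. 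Plugging $\vec{y}_{n+1}=\sqrt{\delta_{n+1}}\,X_{nN+i}\vec{y}_n$ into $\Psi_{n+1}=-\vec{y}_{n+1}^{T}S_{n+1}\vec{y}_{n+1}$ and writing $S_{n+1}=S_n+(S_{n+1}-S_n)$, everything but the increment of $S$ telescopes:
\[
	\Psi_{n+1}=\frac{\delta_{n+1}}{\delta_n}\,\Psi_n-\vec{y}_{n+1}^{T}\bigl(S_{n+1}-S_n\bigr)\vec{y}_{n+1}.
\]
Here $\|S_{n+1}-S_n\|\lesssim\sup_{x\in K}\|R_{(n+1)N+i}(x)-R_{nN+i}(x)\|$ by linearity of $\sym(J\,\cdot\,)$; and, since $R_{nN+i}$ is bounded on $K$ and $c_{n+1}\to\infty$, we have $\det X_{nN+i}\to1$, so $\delta_{n+1}/\delta_n=\det X_{nN+i}/\det X_{(n+1)N+i}=1+O\bigl(\sup_{x\in K}\|X_{(n+1)N+i}-X_{nN+i}\|\bigr)$. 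Both relative errors are summable by the $\calD_1$ hypothesis.

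The recursion is used twice. First, absorbing the error into $|\Psi_{n+1}|$ via $|\vec{y}_{n+1}^{T}S_{n+1}\vec{y}_{n+1}|\asymp\|\vec{y}_{n+1}\|^2$ (valid for $n+1\ge M$) gives, for $n$ large,
\[
	\Bigl|\log|\Psi_{n+1}|-\log|\Psi_n|\Bigr|\le\bigl|\log\delta_{n+1}-\log\delta_n\bigr|+C\sup_{x\in K}\|R_{(n+1)N+i}-R_{nN+i}\|;
\]
summing, the first term telescopes to $\log\delta_n-\log\delta_{M}$ (bounded, as $\delta_n\to1$), so $|\Psi_n|\asymp1$ for $n\ge M$, uniformly on $\Omega\times K$ — the two‑sided bound at the base point $n=M$ coming from continuity of $\Psi_{M}$ on the compact $\Omega\times K$, the fact that $\vec{y}_{M}\neq0$ there (as $\Omega\subset\RR^2\setminus\{0\}$), and the definiteness of $S_{M}$. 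This is Theorem~\ref{thm:A}, and it yields $\|\vec{y}_{n+1}\|^2\lesssim1$. Feeding this back, $|\Psi_{n+1}-\Psi_n|\le|\delta_{n+1}/\delta_n-1|\,|\Psi_n|+\|\vec{y}_{n+1}\|^2\,\|S_{n+1}-S_n\|\lesssim\sup_{x\in K}\|X_{(n+1)N+i}-X_{nN+i}\|+\sup_{x\in K}\|R_{(n+1)N+i}-R_{nN+i}\|$ uniformly on $\Omega\times K$; hence $(\Psi_n)$ is uniformly Cauchy, its uniform limit $g$ is continuous and real with $|g|\asymp1$ (so without zeros on $\Omega\times K$), and summing the one‑step bounds from $k$ to $\infty$ gives the asserted estimate for $k\ge M$. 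The finitely many $k<M$ are absorbed into the constant, since $\sum_{j\ge k}(\cdots)>0$ for each $k$: otherwise $X_{jN+i}$ and $R_{jN+i}$ would be eventually constant, which together with $c_j\to\infty$ forces $X_{jN+i}\equiv\sigma\Id$ and $R_{jN+i}\equiv0$, contradicting $K\subset\Lambda$.

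The step I expect to be the real obstacle is finding and using the invariance $X_{nN+i}^{T}S_nX_{nN+i}=\det(X_{nN+i})S_n$. Without it, a crude bound on $\Psi_{n+1}-\Psi_n$ leaves a leftover term of size $|a_{(n+1)N+i-1}-a_{nN+i-1}|/a_{nN+i-1}$, which need not be summable under the present hypotheses (it is $\asymp1/n$ in the model examples $a_n\sim n^{\kappa}$, $\kappa\in(0,1)$), and the identity is exactly what cancels it. A secondary subtlety is that Theorem~\ref{thm:A} is not available as an input but has to be produced along the way by the logarithmic bootstrap, so the a priori control of $\|\vec{y}_n\|$ and the Cauchy estimate must be extracted from one and the same recursion.
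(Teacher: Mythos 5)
Your proof is correct and follows essentially the same route as the paper: both realize $a_{(k+1)N+i-1}^2 D_{kN+i}$ as a quadratic form built from $\sym(E R_{kN+i})$, use the uniform negativity of $\discr \calR_i$ on $K$ to get two-sided definiteness, and reduce the convergence to summability of the increments of $(R_{jN+i})$ and $(X_{jN+i})$. The only difference is organizational: you derive the key cancellation (the exact conjugation identity $X^{T}\sym(JR)X=\det(X)\sym(JR)$, which kills the non-summable $|a_{(n+1)N+i-1}-a_{nN+i-1}|/a_{nN+i-1}$ term) from scratch and run an additive telescoping with a logarithmic bootstrap, whereas the paper imports the same estimate from \cite[Lemma 3]{PeriodicIII} and packages the argument as a convergent product $\prod(1+F_{jN+i-1})$.
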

\begin{proof}
	We follow the method developed in \cite[Theorem 2]{PeriodicI}, namely we write
	\[
		\frac{S_{(k+1)N+i-1}}{S_{m N+i-1}} 
		= \prod_{j = m}^k (1 + F_{jN+i-1})
	\]
	where for $x \in \RR$ and $\alpha \in \RR^2 \setminus \{0\}$ we have set
	\[
		S_n(\alpha, x) = a_{n+N-1}^2 D_n(\alpha, x),
	\]
	and
	\[
		F_{jN+i-1} = \frac{S_{(j+1)N+i-1} - S_{jN+i-1}}{S_{jN+i-1}}.
	\]
	Therefore, for the proof it is sufficient to show
	\begin{equation}
		\label{eq:15}
		\sum_{j = m}^\infty \sup_{\alpha \in \Omega} \sup_{x \in K} |F_{jN+i-1}(\alpha, x) | < \infty.
	\end{equation}
	In view of \cite[Lemma 3]{PeriodicIII} we have
	\begin{equation}
		\label{eq:29}
		\begin{aligned}
		&
		\big|
		S_{n+N}(\alpha, x) - S_n(\alpha, x)
		\big| \\
		&\qquad\qquad
		\leq
		a_{n+N-1}
		\bigg\|
		\frac{a_{n+2N-1}}{a_{n+N-1}} R_{n+N}(x) - 
		\frac{a_{n+N-1}}{a_{n-1}} R_n(x) \bigg\|
		\big(u_{n+N-1}(x)^2 + u_{n+N}(x)^2\big).
		\end{aligned}
	\end{equation}
	We next consider a quadratic form on $\RR^2$ defined as
	\[
		Q^x_n(v) = a_{n+N-1} \big\langle E X_n(x) v, v \big\rangle
	\]
	where $x \in \RR$, and
	\[
		E = 
		\begin{pmatrix}
		0 & -1 \\
		1 & 0
		\end{pmatrix}.
	\]
	In particular, we have
	\begin{equation}
		\label{eq:34}
		S_n (\alpha, x) = a_{n+N-1} Q^x_n
		\begin{pmatrix}
		u_{n-1} \\
		u_{n}
		\end{pmatrix}.
	\end{equation}
	We claim the following holds true.
	\begin{claim}
		\label{clm:1}
		There is $c > 0$ such that for all $j \in \NN$, $x \in K$, and $v \in \RR^2$,
		\begin{equation}
			\label{eq:9}
			c^{-1} (v_1^2 + v_2^2) \leq \big|Q^x_{jN+i}(v) \big| \leq c (v_1^2 + v_2^2).
		\end{equation}
	\end{claim}
	For the proof, let us observe that
	\[
		Q_{jN+i}^x(v) = \frac{a_{(j+1)N+i-1}}{a_{jN+i-1}} \big\langle E R_{jN+i}(x) v, v\big\rangle.
	\]
	Since the sequence $(R_{jN+i} : j \in \NN_0)$ belongs to $\calD_1\big(K, \GL(2, \RR)\big)$, it is convergent. 
	Let $\calR_i(x)$ denote its limit. We next notice that
	\begin{align*}
		\lim_{j \to \infty} \det X_{jN+i}(x)
		&=
		\lim_{j \to \infty} \det \big( a_{jN+i-1}^{-1} R_{jN+i} + \sigma \Id\big) \\
		&=
		\sigma^2 = 1.
	\end{align*}
	On the other hand,
	\[
		\det X_{jN+i}(x) = \frac{a_{jN+i-1}}{a_{(j+1)N+i-1}}.
	\]
	Consequently, we obtain
	\[
		\lim_{j \to \infty} \frac{a_{(j+1)N+i-1}}{a_{jN+i-1}} = 1,
	\]
	and hence
	\[
		\lim_{j \to \infty} Q_{jN+i}^x(v) = \big\langle E \calR_i(x) v, v \big\rangle.
	\]
	Since
	\[
		\det \sym(E \calR_i) = -\tfrac{1}{4} \discr \calR_i,
	\]
	we conclude that for each $x \in K$,
	\[
		\det \sym(E \calR_i(x)) > 0,
	\]
	which easily leads to \eqref{eq:9}.

	Using now Claim \ref{clm:1} together with \eqref{eq:29} and \eqref{eq:34}, we obtain
	\begin{equation}
		\label{eq:35}
		\sup_{\alpha \in \Omega} \sup_{x \in K} 
		\big|
		F_{jN+i-1}(\alpha, x)
		\big|
		\leq
		c 
		\sup_{x \in K}
		\bigg\|
		\frac{a_{(j+2)N+i-1}}{a_{(j+1)N+i-1}} R_{(j+1)N+i}(x)
		-
		\frac{a_{(j+1)N+i-1}}{a_{jN+i-1}} R_{jN+i}(x)
		\bigg\|.
	\end{equation}
	By \cite[the formula (6)]{PeriodicI}, there is a constant $c > 0$, such that
	\begin{align*}
		&\sum_{j=m}^\infty \sup_K
		\bigg\|
		\frac{a_{(j+2)N + i-1}}{a_{(j+1)N+i-1}} R_{(j+1)N+i} 
		- 
		\frac{a_{(j+1)N + i-1}}{a_{jN+i-1}} R_{jN+i} 
		\bigg\| \\	
		&\qquad\qquad\leq 
		c \sum_{j=m}^\infty
		\bigg( \sup_K \big\| R_{(j+1)N+i} - R_{jN+i} \big\| + 
		\bigg| 
		\frac{a_{(j+2)N + i-1}}{a_{(j+1)N+i-1}}
		-
		\frac{a_{(j+1)N + i-1}}{a_{jN+i-1}}
		\bigg|
		\bigg).
	\end{align*}
	Since for each $r, s \in \{1, 2\}$,
	\[
		\Big| \big[ X_{(j+1)N+i}(x) \big]_{r, s} - \big[ X_{jN+i}(x) \big]_{r, s} \Big|
		\leq
		\big\| X_{(j+1)N+i}(x) - X_{jN+i}(x) \big\|,
	\]
	we obtain
	\[
		\sum_{j = m}^\infty \sup_K \big| \det X_{(j+1)N+i} - \det X_{jN+i} \big| \leq c
		\sum_{j = m}^\infty \sup_K \big \| X_{(j+1)N+i} - X_{jN+i} \big\|.
	\]
	Hence,
	\begin{align*}
		&\sum_{j = m}^\infty \sup_K
		\bigg\| 
		\frac{a_{(j+2)N + i-1}}{a_{(j+1)N+i-1}} R_{(j+1)N+i} 
		- \frac{a_{(j+1)N + i-1}}{a_{jN+i-1}} R_{jN+i}
		\bigg\| \\
		&\qquad\qquad
		\leq 
		c \sum_{j=n}^\infty
		\Big( \sup_K \big\| R_{(j+1)N+i} - R_{jN+i} \big\| +
		\sup_K \big\| X_{(j+1)N+i} - X_{jN+i} \big\| \Big),
	\end{align*}
	which together with \eqref{eq:35} implies \eqref{eq:15} and the theorem follows.
\end{proof}

\section{Approximation procedure}
\label{sec:3}
In this section we present a method that allows to prove a formula for the density of an orthogonality measure.
It is a further development of \cite{PeriodicII} and \cite{SwiderskiTrojan2019}.

Let $(p_n : n \in \NN_0)$ be a sequence of polynomials corresponding to sequences $(a_n : n \in \NN_0)$ and
$(b_n : n \in \NN_0)$. We set
\begin{equation} 
	\label{eq:135}
	\scrD_n(x) = p_n(x) p_{n+N-1}(x) - p_{n-1}(x) p_{n+N}(x).
\end{equation}
Then
\[
	\scrD_n(x)
	=
	D_n \Big( \Big(1, \tfrac{x - b_0}{a_0} \Big), x \Big)
\]
where $D_n$ is given by \eqref{eq:40}. 

Given $L \in \NN$, we consider the truncated sequences $(a^L_n : n \in \NN_0)$ and $(b^L_n : n \in \NN_0)$ defined by
\begin{subequations}
	\begin{equation}
	\label{eq:42a}
	a^L_n = 
	\begin{cases}
		a_n & \text{if } 0 \leq n < L+N, \\
		a_{L+i} & \text{if } L+N \leq n, \text{ and } n-L \equiv i \bmod N,
	\end{cases}
	\end{equation}
	and
	\begin{equation}
	\label{eq:42b}
	b^L_n =
	\begin{cases}
		b_n & \text{if } 0 \leq n < L+N, \\
		b_{L+i} & \text{if } L+N \leq n, \text{ and } n-L \equiv i \bmod N,
	\end{cases}
	\end{equation}
\end{subequations}
where $i \in \{0, 1, \ldots, N-1\}$. Let $(\scrD_n^L : n \in \NN_0)$ be the sequence \eqref{eq:135} associated to the
polynomials $(p_n^L : n \in \NN_0)$ that are corresponding to the sequences $a^L$ and $b^L$. Then
\[
	\scrD_n^L(x) = 
	\bigg\langle
	E X_n^L(x)
	\begin{pmatrix}
		p^L_{n-1}(x) \\
		p^L_{n}(x)
	\end{pmatrix},
	\begin{pmatrix}
		p^L_{n-1}(x) \\
		p^L_{n}(x)
	\end{pmatrix}
	\bigg\rangle
\]
where
\begin{equation} 
	\label{eq:43}
	X_n^L(x) = \prod_{j=n}^{n+N-1}
	\begin{pmatrix}
		0 & 1\\
		-\frac{a_{j-1}^L}{a_j^L} & \frac{x - b_j^L}{a_j^L}
	\end{pmatrix} \quad \text{and} \quad
	E = 
	\begin{pmatrix}
		0 & -1 \\
		1 & 0	
	\end{pmatrix}.
\end{equation}
Observe that there is the unique measure $\mu_L$ orthonormalizing the polynomials $(p^L_n : n \in \NN_0)$.

\begin{proposition}
	\label{prop:1}
	Given $\sigma \in \{-1, 1\}$ we set
	\begin{equation}
		\label{prop:1:eq:1}
		R_n = a_{n+N-1} \big( X_n - \sigma \Id \big), \qquad \text{and} \qquad
		R_n^L = a^L_{n+N-1} \big( X^L_n - \sigma \Id \big).
	\end{equation}
	Suppose that $(L_j : j \in \NN)$ is an increasing sequence of integers such that
	\begin{equation}
		\label{prop:1:eq:2}
		\lim_{j \to \infty} a_{L_j-1} = \infty, \qquad\text{and}\qquad
		\lim_{j \to \infty} (a_{L_j + N-1} - a_{L_j - 1}) = 0.
	\end{equation}
	If $K$ is a compact subset of $\RR$, such that
	\begin{equation} 
		\label{prop:1:eq:3}
		\sup_{j \in \NN} \sup_{K} \| R_{L_j} \| < \infty,
	\end{equation}
	then
	\[
		\lim_{j \to \infty} 
		\sup_K 
		\left\| R_{L_j} - R^{L_j}_{L_j+N} \right\| 
		= 0.
	\]
\end{proposition}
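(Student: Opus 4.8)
The plan is to compare the two $N$-step transfer matrices entering $R_{L_j}$ and $R^{L_j}_{L_j+N}$ directly, exploiting that the truncation \eqref{eq:42a}--\eqref{eq:42b} alters only a single one-step factor. First I would record the prefactors in \eqref{prop:1:eq:1}. Since $L+2N-1-L \equiv N-1 \pmod{N}$ and $L+2N-1 \ge L+N$, definition \eqref{eq:42a} gives $a^L_{L+2N-1} = a_{L+N-1}$, so that $R^L_{L+N} = a_{L+N-1}\big(X^L_{L+N} - \sigma\Id\big)$ carries exactly the same prefactor $a_{L+N-1}$ as $R_L = a_{L+N-1}(X_L - \sigma\Id)$. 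Hence
\[
	R_L - R^L_{L+N} = a_{L+N-1}\big(X_L - X^L_{L+N}\big),
\]
and it suffices to control $a_{L+N-1}\,\norm{X_L - X^L_{L+N}}$ on $K$ along $L = L_j$.

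Next I would compare the two products factor by factor. Since $X_n$ sends $(u_{n-1}, u_n)$ to $(u_{n+N-1}, u_{n+N})$, it is the ordered product $X_n = B_{n+N-1}\cdots B_n$ with $B_n$ the rightmost (first applied) factor. Inspecting \eqref{eq:42a}--\eqref{eq:42b} for $L+N \le j \le L+2N-1$ shows $B^L_{L+N+k} = B_{L+k}$ for $1 \le k \le N-1$, while the rightmost factors differ only in their $(2,1)$ entry,
\[
	B_L - B^L_{L+N} = \frac{a_{L+N-1} - a_{L-1}}{a_L}\begin{pmatrix} 0 & 0 \\ 1 & 0 \end{pmatrix}.
\]
Writing $P = B_{L+N-1}\cdots B_{L+1}$ for the common block, one has $X_L = P B_L$ and $X^L_{L+N} = P B^L_{L+N}$, so
\[
	X_L - X^L_{L+N} = P\big(B_L - B^L_{L+N}\big) = \frac{a_{L+N-1} - a_{L-1}}{a_L}\,(P e_2)\, e_1^{\top},
\]
where $e_1, e_2$ is the standard basis of $\RR^2$; in particular only the column $P e_2$ matters, and $\norm{X_L - X^L_{L+N}} = \tfrac{|a_{L+N-1}-a_{L-1}|}{a_L}\norm{P e_2}$.

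The crucial step — and the place where the naive estimate $\norm{X_L - X^L_{L+N}} \le \norm{P}\,\norm{B_L - B^L_{L+N}}$ is useless, since the hypothesis \eqref{prop:1:eq:3} bounds $R_{L_j}$ but says nothing directly about the partial product $P$ — is to evaluate $P e_2$ through the identity $P = X_L B_L^{-1}$. A direct computation gives $B_L^{-1} e_2 = -\tfrac{a_L}{a_{L-1}} e_1$, whence $P e_2 = -\tfrac{a_L}{a_{L-1}} X_L e_1$ and the uncontrolled factor $a_L$ cancels:
\[
	\norm{R_L - R^L_{L+N}} = \frac{a_{L+N-1}}{a_{L-1}}\,\big|a_{L+N-1} - a_{L-1}\big|\,\norm{X_L e_1}.
\]

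Finally I would verify that each factor on the right is controlled along $L = L_j$, uniformly on $K$. The difference $|a_{L_j+N-1} - a_{L_j-1}| \to 0$ and $a_{L_j-1} \to \infty$ by \eqref{prop:1:eq:2}, so $\tfrac{a_{L_j+N-1}}{a_{L_j-1}} = 1 + \tfrac{a_{L_j+N-1}-a_{L_j-1}}{a_{L_j-1}} \to 1$; moreover, from $X_{L_j} = \sigma\Id + a_{L_j+N-1}^{-1} R_{L_j}$ together with \eqref{prop:1:eq:3} one gets $\sup_K \norm{X_{L_j} e_1} \le \sup_K \norm{X_{L_j}} \le 1 + a_{L_j+N-1}^{-1}\sup_K\norm{R_{L_j}}$, which is bounded (here $a_{L_j+N-1} \to \infty$, since it differs from $a_{L_j-1}$ by a null sequence). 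Taking the supremum over $x \in K$ then yields $\sup_K\norm{R_{L_j} - R^{L_j}_{L_j+N}} \to 0$. I expect the only subtle point to be the column identity $P e_2 = -\tfrac{a_L}{a_{L-1}} X_L e_1$ that removes the factor $a_L$; the remainder is bookkeeping on the truncation indices in \eqref{eq:42a}--\eqref{eq:42b}.
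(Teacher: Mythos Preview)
Your proof is correct and follows the same route as the paper's: both reduce to the identity $R_L - R^L_{L+N} = a_{L+N-1}(X_L - X^L_{L+N})$ and then bound the right-hand side by $\frac{a_{L+N-1}}{a_{L-1}}\,|a_{L+N-1}-a_{L-1}|$ times a uniformly bounded quantity. The only difference is that the paper imports this bound from \cite[Corollary 4]{SwiderskiTrojan2019}, whereas you derive it from scratch via the column identity $P e_2 = -\tfrac{a_L}{a_{L-1}} X_L e_1$; your version is in fact slightly sharper ($\|X_L e_1\|$ in place of $\|X_L\|$), though this makes no difference for the conclusion.
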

\begin{proof}
	Since
	\[
		R_L - R_{L+N}^L = a_{L+N-1} \big( X_L - X_{L+N}^L \big),
	\]
	in view of \cite[Corollary 4]{SwiderskiTrojan2019} we have
	\begin{equation}
		\label{eq:20}
		\big\| R_L - R_{L+N}^L \big\| 
		\leq
		\| X_L \| 
		\frac{a_{L+N-1}}{a_{L-1}} \big| a_{L+N-1} - a_{L-1} \big|.
	\end{equation}
	By \eqref{prop:1:eq:1}, we have
	\[
		\| X_n \| \leq 1 + \frac{\|R_n\|}{a_{n+N-1}},
	\]
	thus, \eqref{prop:1:eq:2} and \eqref{prop:1:eq:3} imply that
	\begin{equation}
		\label{eq:18}
		\sup_{j \in \NN} \sup_K \| X_{L_j} \| < \infty.
	\end{equation}
	Lastly, by \eqref{prop:1:eq:2} we have
	\begin{equation}
		\label{eq:19}
		\lim_{j \to \infty} \frac{a_{L_j+N-1}}{a_{L_j-1}} = 1.
	\end{equation}
	Therefore, by using \eqref{eq:18}, \eqref{eq:19} and \eqref{prop:1:eq:2} in \eqref{eq:20}, the conclusion follows.
\end{proof}

\begin{theorem} \label{thm:4}
	Let $N$ be a positive integer and $\sigma \in \{-1, 1\}$. We set
	\[
		R_n = a_{n+N-1} \big(X_n - \sigma \Id \big).
	\]
	Suppose $(L_j : \in \NN)$ is an increasing sequence of positive integers such that
	\begin{equation} 
		\label{eq:50a}
		\lim_{j \to \infty} a_{L_j - 1} = \infty,
	\end{equation}
	and
	\begin{equation} 
		\label{eq:50b}
		\lim_{j \to \infty} \big( a_{L_j + N-1} - a_{L_j - 1} \big) = 0.
	\end{equation}
	Let $K$ be a compact subset of 
	\[
		\Lambda = \Big\{x \in \RR : \lim_{j \to \infty} \discr R_{L_j}(x)
		\text{ exists and is negative} \Big\}
	\]
	with non-empty interior and such that
	\begin{equation}
		\label{eq:52}
		\sup_{j \in \NN} \sup_{x \in K} \| R_{L_j}(x) \| < \infty.
	\end{equation}
	Suppose that there is a positive function $g: K \rightarrow \RR$ such that 
	\begin{equation} 
		\label{eq:51}
		\lim_{j \to \infty} \sup_{x \in K} 
		\Big| a_{L_j+N-1}^2 \big| \scrD_{L_j}(x) \big| - g(x) \Big| = 0.
	\end{equation}
	If $\nu$ is any weak accumulation point of the sequence $(\mu_{L_j} : j \in \NN)$, then 
	$\nu$ is a probability measure such that $(p_n : n \in \NN_0)$ are orthogonal in 
	$L^2(\RR, \nu)$, which is absolutely continuous on $K$ with the density
	\[
		\nu'(x) = \frac{\sqrt{-h(x)}}{2 \pi g(x)}, \qquad x \in K
	\]
	where
	\begin{equation} \label{eq:17}
		h(x) = \lim_{j \to \infty} \discr R_{L_j}(x), \qquad x \in K.
	\end{equation}
\end{theorem}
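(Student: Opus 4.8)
The plan is to combine the approximation procedure of Proposition \ref{prop:1}, the uniform convergence of generalized Turán determinants (Theorem \ref{thm:3}), and a spectral-approximation argument connecting the truncated measures $\mu_{L_j}$ to $\nu$. First I would verify that for each fixed $L$ the truncated Jacobi parameters $a^L, b^L$ are eventually $N$-periodic, so that the orthogonality measure $\mu_L$ is explicitly computable via the classical theory of periodic Jacobi matrices: its absolutely continuous part on the relevant band is given in terms of the Floquet discriminant of the periodic tail, and the density can be expressed through the truncated transfer matrices $X_n^L$ and the polynomials $p_n^L$. Concretely, on the set where the periodic tail has spectrum, one has a Weyl--Wall type formula expressing $\mu_L'$ in terms of $|\scrD_{L+N}^L|$ and the discriminant of $R_{L+N}^L$; the identity $\scrD_n^L(x) = \langle E X_n^L(x) (p_{n-1}^L, p_n^L), (p_{n-1}^L, p_n^L)\rangle$ already recorded in the text is the key algebraic link. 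I expect this step to follow the template of \cite{PeriodicII, SwiderskiTrojan2019} rather than requiring new ideas.

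Next I would pass to the limit $j \to \infty$. By \eqref{eq:50a}, \eqref{eq:50b} and \eqref{eq:52}, Proposition \ref{prop:1} gives $\sup_K \|R_{L_j} - R_{L_j+N}^{L_j}\| \to 0$, hence also $\discr R_{L_j+N}^{L_j} \to h$ and $\|X_{L_j+N}^{L_j}\|$ stays bounded, uniformly on $K$. Combined with \eqref{eq:51}, which controls $a_{L_j+N-1}^2 |\scrD_{L_j}(x)|$, and with the closeness of the truncated Turán determinant $\scrD_{L_j+N}^{L_j}$ to $\scrD_{L_j}$ (again via Proposition \ref{prop:1}-type estimates and the continuity of the determinant in the transfer matrices), I would deduce that the explicit densities $\mu_{L_j}'$ converge uniformly on $K$ to $\frac{\sqrt{-h(x)}}{2\pi g(x)}$. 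One must be a little careful that \eqref{eq:51} is stated with $\scrD_{L_j}$, not $\scrD_{L_j+N}^{L_j}$, so the bridge between these two quantities — $\scrD_{L_j}$ depends on the original parameters through $p_{L_j-1}, p_{L_j}$, while $\scrD_{L_j+N}^{L_j}$ depends on $p^{L_j}_{L_j+N-1}, p^{L_j}_{L_j+N}$ — has to be made explicit; since $a^L_n = a_n$ and $b^L_n = b_n$ for $n < L+N$, the polynomials agree, $p_n^{L} = p_n$ for $n \le L+N$, so in fact $\scrD_n^{L} = \scrD_n$ for $n \le L+N-1$, and then the one remaining $N$-step is handled by the transfer-matrix comparison.

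Finally I would invoke weak compactness: the $\mu_{L_j}$ are probability measures with uniformly bounded moments of each order (because truncation does not change the first $L_j$ Jacobi parameters, so the first $2L_j$ moments of $\mu_{L_j}$ agree with those of $\nu$), hence the family is tight and has weak accumulation points; any such $\nu$ is a probability measure orthonormalizing $(p_n)$, since for fixed $n,m$ the orthogonality relation $\int p_n p_m \, d\mu_{L_j} = \delta_{nm}$ holds for all large $j$ and passes to the limit. To upgrade weak convergence on $K$ to absolute continuity with the stated density, I would test against continuous functions supported in the interior of $K$: the uniform convergence $\mu_{L_j}' \to \frac{\sqrt{-h}}{2\pi g}$ on $K$ together with uniform boundedness of $\mu_{L_j}'$ on a slightly larger compact set forces $\int_K f \, d\nu = \int_K f(x) \frac{\sqrt{-h(x)}}{2\pi g(x)} \, dx$ for every such $f$, which is exactly absolute continuity on the interior of $K$ with the claimed density; the boundary points of $K$ contribute nothing since $\nu$ has no atoms there (the density stays bounded). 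The main obstacle, I expect, is not any single hard estimate but the bookkeeping needed to make the periodic-Jacobi density formula for $\mu_{L_j}$ genuinely uniform in $j$ on all of $K$ and to track which quantities are evaluated at the shift $L_j$ versus $L_j+N$; once that is set up cleanly, the limit is a routine combination of Proposition \ref{prop:1}, Theorem \ref{thm:3}, and weak compactness.
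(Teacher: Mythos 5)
Your proposal follows essentially the same route as the paper's proof: the explicit density of the eventually periodic truncation $\mu_{L_j}$ in terms of $\big|\scrD^{L_j}_{L_j+N}\big|$ and $\discr R^{L_j}_{L_j+N}$, the observation that the Tur\'an limit for the truncated system stabilizes after one period, Proposition~\ref{prop:1} for the convergence of the discriminants, and weak compactness for the final passage to $\nu$ (which the paper delegates to an external proposition). The one point you leave implicit is the estimate the comparison $S^{L_j}_{L_j+N}-S_{L_j}\to 0$ actually hinges on: the error is bounded by $a_{L_j+N-1}\big(p_{L_j+N-1}^2+p_{L_j+N}^2\big)\,\|X_{L_j}\|\,\big|a_{L_j+N-1}-a_{L_j-1}\big|$, so one must control $a_{L_j+N-1}\big(p_{L_j+N-1}^2+p_{L_j+N}^2\big)$ uniformly, which the paper does via the lower bound $|S_{L_j}|\gtrsim a_{L_j+N-1}\big(p_{L_j+N-1}^2+p_{L_j+N}^2\big)$ coming from the negativity of $\discr R_{L_j}$ on $K$ combined with the boundedness of $|S_{L_j}|$ from \eqref{eq:51} --- ``continuity of the determinant in the transfer matrices'' alone would not close this step.
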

\begin{proof}
	Let us fix a positive integer $L$. We set
	\begin{equation} 
		\label{eq:14}
		R_{n}^{L} = a^{L}_{n+N-1} \big( X_{n}^{L} - \sigma \Id \big),
	\end{equation}
	and
	\[
		\Lambda_L = \Big\{ x \in \RR: \discr\big(R^L_{L+N}(x)\big) < 0 \Big\},
	\]
	and
	\[
		S^L_n(x) = \big( a^L_{n+N-1} \big)^2 \scrD^L_{n}(x), \qquad n \geq 1.
	\]
	By \eqref{eq:14},
	\[
		\discr\big(X^L_{L+N}(x)\big) = \frac{1}{a_{L+N-1}^2}\discr\big(R^L_{L+N}(x)\big).
	\]
	Hence, by \cite[Theorem 3]{PeriodicII}, (see also \cite[Theorem 6]{GeronimoVanAssche1991}), 
	for each $x \in \Lambda_L$,
	\[
		\text{the limit}\quad 
		\lim_{k \to \infty} \big|S^L_{L+kN}(x)\big| 
		\quad\text{exists,}
	\]
	and defines a positive continuous function $g^L: \Lambda_L \rightarrow \RR$. Moreover, 
	the orthogonality measure $\mu_L$ is absolutely continuous on $\Lambda_L$ with the density
	\begin{align}
		\nonumber
		\mu'_L(x) &= \frac{a_{L+N-1} \sqrt{-\discr\big(X^L_{L+N}(x)\big)}}{2 \pi g^L(x)} \\
		\label{eq:55}
		&= 
		\frac{\sqrt{-\discr\big(R^L_{L+N}(x)\big)}}{2 \pi g^L(x)}.
	\end{align}
	Next, we observe that by estimates (21) and (22) from \cite{PeriodicI}
	\begin{align*}
		&\Big| S^L_{n+N}(x) - S^L_{n}(x) \Big| \\
		&\qquad
		\leq 
		a_{n+N-1}^L
		\Big( \big( p^L_{n+N-1}(x) \big)^2 + \big( p^L_{n+N-1}(x) \big)^2 \Big)
		\bigg\|
			\frac{a^L_{n+2N-1}}{a^L_{n+N-1}} R^L_{n+N}(x) -
			\frac{a^L_{n+N-1}}{a^L_{n-1}} R^L_{n}(x)
		\bigg\|,
	\end{align*}
	which together with \eqref{eq:42a} and \eqref{eq:42b} entails that $S^L_{n+N}(x) = S^L_n(x)$ for all 
	$n \geq L+1$. Hence, for all $x \in \Lambda_L$,
	\begin{equation}
		\label{eq:11}
		g^L(x) = \big| S^L_{L+N}(x) \big|.
	\end{equation}
	Let us fix a compact subset $K \subset \Lambda$. Since $\discr \big( R_{L_j}(x) \big)$ is 
	a polynomial of degree at most $2N$, the convergence in \eqref{eq:17} is uniform on $K$. 
	Thus, by Proposition~\ref{prop:1},
	\begin{equation}
		\label{eq:39}
		\lim_{j \to \infty}
		\sup_{x \in K}
		\Big| \discr \Big( R_{L_j+N}^{L_j}(x) \Big) - h(x) \Big| = 0.
	\end{equation}
	Moreover, $K \subset \Lambda_{L_j}$ for all $j$ sufficiently large. Now, setting
	\[
		S_n(x) = a_{n+N-1}^2 \scrD_n(x),
	\]
	by \cite[Proposition 5]{SwiderskiTrojan2019}, we obtain
	\begin{align*}
		\big| S^{L_j}_{L_j+N}(x) - S_{L_j}(x) \big| 
		&= 
		a_{L_j+N-1}^2 \big|\scrD^{L_j}_{L_j+N}(x) - \scrD_{L_j}(x)\big| \\
		&\leq
		a_{L_j+N-1} \big( p_{L_j+N-1}^2(x) + p_{L_j+N}^2(x) \big)
		\| X_{L_j}(x) \| \left|a_{L_j+N-1} - a_{L_j-1} \right|.
	\end{align*}
	From \cite[Lemma 3]{PeriodicIII} we have
	\[
		S_L(x) = \frac{a^2_{L+N-1}}{a_{L-1}}
		\left\langle
			E R_L(x)
			\begin{pmatrix}
				p_{L+N-1}(x) \\
				p_{L+N}(x)
			\end{pmatrix},
			\begin{pmatrix}
				p_{L+N-1}(x) \\
				p_{L+N}(x)
			\end{pmatrix}
		\right\rangle.
	\]	
	Since $K$ is a compact subset of $\Lambda$, there are $j_0$ and $\delta > 0$ such that for all $j \geq j_0$ and 
	$x \in K$ we have
	\begin{equation}
		\label{eq:37}
		\det \Big( \sym \big( E R_{L_j}(x) \big) \Big) = - \frac{1}{4} \discr \big( R_{L_j}(x) \big) \geq \delta.
	\end{equation}
	By \eqref{eq:50a} and \eqref{eq:50b}
	\[
		\lim_{j \to \infty} \frac{a_{L_j+N-1}}{a_{L_j-1}} = 1,
	\]
	which together with \eqref{eq:37} implies that there are $j_0$ and $c > 0$ such that for all $j \geq j_0$ and
	$x \in K$,
	\[
		\big| S_{L_j}(x) \big| \geq c^{-1} a_{L_j+N-1} \big( p_{L_j+N-1}^2(x) + p_{L_j+N}^2(x) \big).
	\]
	Hence,
	\[
		\big|S^{L_j}_{L_j+N}(x) - S_{L_j}(x)\big| 
		\leq c 
		|S_{L_j}(x)| \cdot \| X_{L_j}(x) \| \left| a_{L_j+N-1} - a_{L_j-1} \right|.
	\]
	Since
	\[
		X_{L_j}  = \sigma \Id + \frac{1}{a_{L_j+N-1}} R_{L_j},
	\]
	by \eqref{eq:50a} and \eqref{eq:52}, we easily obtain
	\[
		\sup_{j \in \NN} \sup_{x \in K} \| X_{L_j}(x) \| < \infty.
	\]
	Therefore, by \eqref{eq:51} and \eqref{eq:11},
	\[
		\sup_{x \in K} \big|g^{L_j}(x) - g(x)\big| \leq c \left| a_{L_j+N-1} - a_{L_j-1} \right|,
	\]
	thus
	\begin{equation}
		\label{eq:57}
		\lim_{j \to \infty}
		\sup_{x \in K} \big| g^{L_j}(x) - g(x) \big| = 0.
	\end{equation}
	Finally, by \eqref{eq:55}, \eqref{eq:39}, and \eqref{eq:57} we obtain
	\begin{equation}
		\label{eq:41}
		\lim_{j \to \infty}
		\sup_{x \in K} \bigg| \mu'_{L_j}(x) - \frac{\sqrt{-h(x)}}{2\pi g(x)} \bigg| = 0,
	\end{equation}
	and the theorem is a consequence of \cite[Proposition 4]{SwiderskiTrojan2019}.
\end{proof}

\begin{corollary}
	\label{cor:1}
	Let the hypothesis of Theorem \ref{thm:3} be satisfied. If
	\begin{equation}
		\label{cor:1:eq:1}
		\lim_{j \to \infty} (a_{(j+1)N+i-1} - a_{jN+i-1}) = 0,
	\end{equation}
	then the sequence $(\mu_{jN+i} : j \in \NN)$ is weakly convergent to the probability measure $\mu$ which is
	absolutely continuous on $K$. Moreover, the sequence $(p_n : n \in \NN_0)$ is orthonormal in $L^2(\RR, \mu)$, and
	\begin{equation}
		\label{eq:38}
		\lim_{j \to \infty} \sup_{x \in K} \big| \mu'_{jN+i}(x) - \nu'(x) \big| = 0.
	\end{equation}
\end{corollary}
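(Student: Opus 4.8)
The plan is to derive the statement from Theorems~\ref{thm:3} and~\ref{thm:4} applied along the arithmetic progression $L_j = jN+i$, and then to promote the conclusion about weak accumulation points to weak convergence of the full sequence. First I would invoke Theorem~\ref{thm:3} with the compact connected set
\[
	\Omega = \Big\{ \big(1, \tfrac{x-b_0}{a_0} \big) : x \in K \Big\},
\]
which lies in $\RR^2 \setminus \{0\}$ (its first coordinate is $1$) and is connected as the continuous image of the interval $K$. Theorem~\ref{thm:3} then supplies a real, continuous, nowhere vanishing function $\tilde g$ on $\Omega \times K$ with $a_{(k+1)N+i-1}^2 D_{kN+i}(\alpha, x) \to \tilde g(\alpha, x)$ uniformly on $\Omega \times K$. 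Setting $g(x) = \big| \tilde g\big( (1, \tfrac{x-b_0}{a_0}), x \big) \big|$, a positive continuous function on $K$ (since $\tilde g$ has no zeros and $K$ is connected), and recalling from Section~\ref{sec:3} that $\scrD_n(x) = D_n\big( (1, \tfrac{x-b_0}{a_0}), x \big)$, we obtain
\[
	\lim_{j \to \infty} \sup_{x \in K} \Big| a_{(j+1)N+i-1}^2 \big| \scrD_{jN+i}(x) \big| - g(x) \Big| = 0,
\]
which is precisely hypothesis \eqref{eq:51} of Theorem~\ref{thm:4} for $L_j = jN+i$.

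The remaining hypotheses of Theorem~\ref{thm:4} are read off directly: \eqref{eq:50a} is the standing assumption $\lim_j a_{jN+i-1} = \infty$; \eqref{eq:50b} is exactly \eqref{cor:1:eq:1} once one writes $a_{L_j+N-1} = a_{(j+1)N+i-1}$; the set $\Lambda$ in Theorem~\ref{thm:4} coincides with the one in Theorem~\ref{thm:3} for this choice of $L_j$, so $K \subset \Lambda$; and \eqref{eq:52} holds because $\big( R_{jN+i} : j \in \NN \big) \in \calD_1\big( K, \GL(2,\RR) \big)$ forces the maps $R_{jN+i}$ to be uniformly Cauchy, hence uniformly bounded, on $K$. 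Thus Theorem~\ref{thm:4} applies: every weak accumulation point $\nu$ of $(\mu_{jN+i} : j \in \NN)$ is a probability measure orthonormalizing $(p_n)$ in $L^2(\RR, \nu)$, absolutely continuous on $K$ with density $\nu'(x) = \sqrt{-h(x)}/(2\pi g(x))$; moreover \eqref{eq:41}, established inside the proof of Theorem~\ref{thm:4} and specialized to $L_j = jN+i$, is exactly \eqref{eq:38}.

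It remains to see that $(\mu_{jN+i})$ converges weakly, for which I would argue tightness and uniqueness of the limit. Since the truncation leaves $a_0, b_0$ unchanged we have $p^{jN+i}_0 = p_0$ and $p^{jN+i}_1 = p_1$, so, expanding $x = a_0 p_1 + b_0 p_0$ and using orthonormality in $L^2(\RR, \mu_{jN+i})$,
\[
	\int_\RR x^2 \ud \mu_{jN+i}(x) = a_0^2 + b_0^2
\]
for every $j$; by Chebyshev's inequality the family $(\mu_{jN+i})$ is uniformly tight, hence every subsequence has a further subsequence converging weakly to a probability measure. To identify the limit I would show the moment problem for $(a_n), (b_n)$ is determinate. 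Since $a_{(j+1)N+i-1} - a_{jN+i-1} \to 0$, there is $j_0$ with $a_{jN+i-1} \leq a_{j_0 N+i-1} + (j - j_0)$ for $j \geq j_0$, whence $\sum_j a_{jN+i-1}^{-1} = \infty$; picking any $x \in K$ (nonempty, as $K$ has nonempty interior) and any generalized eigenvector $(u_n)$ associated with $x$, Theorem~\ref{thm:A} gives $a_{nN+i-1}(u_{nN+i-1}^2 + u_{nN+i}^2) \geq c^{-1}(u_0^2 + u_1^2)$ for all $n \geq 1$, so
\[
	\sum_{n=1}^\infty \big( u_{nN+i-1}^2 + u_{nN+i}^2 \big) \geq c^{-1}(u_0^2 + u_1^2) \sum_{n=1}^\infty a_{nN+i-1}^{-1} = \infty,
\]
and $(u_n) \notin \ell^2$. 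Hence the moment problem is determinate; let $\mu$ be its unique solution. Every weak accumulation point of $(\mu_{jN+i})$ orthonormalizes $(p_n)$, so it shares all moments with $\mu$ and therefore equals $\mu$; together with tightness this yields $\mu_{jN+i} \to \mu$ weakly, the measure $\mu$ being absolutely continuous on $K$ with the density recorded above, and \eqref{eq:38} as already noted.

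The genuinely new part is the last paragraph: Theorem~\ref{thm:4} controls accumulation points only, so one must separately pin down a unique limit. Its one nonroutine ingredient is the elementary fact that $a_{jN+i-1} \to \infty$ with asymptotically vanishing increments forces at most linear growth, and hence $\sum_j a_{jN+i-1}^{-1} = \infty$; fed into the lower bound of Theorem~\ref{thm:A} this produces a generalized eigenvector outside $\ell^2$, i.e.\ determinacy. I would also be careful that the tightness estimate is uniform in $j$, which is why I rely on the exact identity $\int_\RR x^2 \ud\mu_{jN+i} = a_0^2 + b_0^2$ rather than a cruder bound.
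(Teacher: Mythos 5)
Your proof is correct, and its skeleton matches the paper's: feed Theorem~\ref{thm:3} (with $\Omega=\{(1,\tfrac{x-b_0}{a_0}):x\in K\}$) into Theorem~\ref{thm:4} along $L_j=jN+i$, get \eqref{eq:38} from \eqref{eq:41}, and then use the vanishing increments to force at most linear growth of $a_{kN+i}$ and hence divergence of $\sum_k a_{kN+i}^{-1}$, which is the key to uniqueness of the limit. The one place you genuinely deviate is the determinacy step: the paper simply observes that $\sum_n a_n^{-1}\geq\sum_k a_{kN+i}^{-1}=\infty$ is the Carleman condition and cites the standard fact that Carleman implies determinacy, whereas you combine the divergence of $\sum_n a_{nN+i-1}^{-1}$ with the lower bound of Theorem~\ref{thm:A} to exhibit a generalized eigenvector outside $\ell^2$. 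Both are valid; the paper's route is more economical since it needs no information about generalized eigenvectors, while yours only uses hypotheses already in force and would survive even without the Carleman citation. Your explicit verification of hypothesis \eqref{eq:51} via Theorem~\ref{thm:3} and your tightness argument through the identity $\int x^2\,d\mu_{jN+i}=a_0^2+b_0^2$ are details the paper leaves implicit (the latter is subsumed in its citation of \cite[Proposition 4]{SwiderskiTrojan2019} inside Theorem~\ref{thm:4}); spelling them out is harmless and arguably an improvement in rigor.
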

\begin{proof}
	In view of \eqref{cor:1:eq:1}, there is $c > 0$ such that for all $k \geq 0$,
	\begin{align*}
		a_{kN+i} 
		&= 
		\sum_{j=0}^{k-1} \big( a_{(j+1)N+i} - a_{jN+i} \big) + a_i \\
		&\leq
		c(k+1).
	\end{align*}
	Therefore,
	\begin{align*}
		\sum_{n = 0}^{k_0N+i} \frac{1}{a_n} 
		&\geq 
		\sum_{k = 0}^{k_0} \frac{1}{a_{kN+i}} \\
		&\geq 
		\frac{1}{c} \sum_{k=1}^{k_0} \frac{1}{k}.
	\end{align*}
	Thus, the Carleman condition is satisfied, and consequently, there is the only one measure $\mu$
	such that $(p_n : n \in \NN_0)$ are orthonormal in $L^2(\RR, \mu)$. Lastly, \eqref{eq:38} is a consequence
	of \eqref{eq:41}.
\end{proof}

\section{Asymptotics of orthogonal polynomials}
\label{sec:4}
Our next goal is to derive the asymptotic formula for the polynomials $(p_n : n \in \NN_0)$.
\begin{theorem}
	\label{thm:1}
	Let $N$ be a positive integer, $\sigma \in \{-1, 1\}$, and $i \in \{0, 1, \ldots, N-1 \}$. We set
	\[
		R_n = a_{n+N-1} ( X_n - \sigma \Id ).
	\]
	Suppose that
	\[
		\lim_{j \to \infty} a_{jN+i-1} = \infty.
	\]
	Let $K$ be a compact interval with non-empty interior contained in
	\[
		\Lambda = 
		\big\{ 
		x \in \RR : \lim_{j \to \infty} \discr R_{jN+i}(x) \ \text{exists and is negative} 
		\big\}.
	\]
	If
	\[
		(X_{jN+i} : j \in \NN), (R_{jN+i} : j \in \NN) \in \calD_1 \big( K, \GL(2, \RR) \big),
	\]
	then there are $M > 0$ and a continuous function $\vphi$ such that
	\begin{equation}
		\label{eq:16} 
		\lim_{k \to \infty} 
		\sup_{x \in K}
		\bigg|
		\frac{a_{(k+1)N+i-1}}{\prod_{j=M+1}^k \lambda_{jN+i}(x)} 
		\Big(p_{(k+1) N+i}(x) -\overline{\lambda_{kN+i}(x)} p_{kN+i}(x) \Big)
		-\vphi(x)
		\bigg| = 0
	\end{equation}
	where
	\[
		\lambda_n(x) = \frac{\tr X_{n}(x)}{2} + \frac{i}{2} \sqrt{-\discr X_n(x)}.
	\]
	Moreover,
	\begin{equation}
		\label{eq:44}
		\sqrt{\frac{a_{(k+1)N+i-1}}{a_{(M+1)N+i-1}}} p_{kN+i}(x) 
		= 
		\frac{2 |\vphi(x)|}{\sqrt{-\discr \calR_i(x)}} 
		\sin \bigg( \sum_{j=M+1}^k \arg \lambda_{jN+i}(x) + \arg \vphi(x) \bigg) 
		+ E_{kN+i}(x)
	\end{equation}
	where $\calR_i$ is the limit of $(R_{jN+i} : j \in \NN)$, and 
	\[
		\sup_K |E_{kN+i}| 
		\leq 
		c \sum_{j=k}^\infty 
		\Big( \sup_K \| X_{(j+1)N+i} - X_{jN+i} \| + \sup_K \| R_{(j+1)N+i} - R_{jN+i} \| \Big).
	\]
\end{theorem}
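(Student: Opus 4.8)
The plan is to reduce the recurrence restricted to the residue class $i$ modulo $N$ to a diagonal system perturbed by an $\ell^1$ sequence and then pass to the limit, in the spirit of the argument of \cite{SwiderskiTrojan2019} for the case $\discr\calX<0$; the new feature, forced by $X_{jN+i}(x)\to\sigma\Id$ being a \emph{degenerate} limit (one eigenvalue of multiplicity two), is that the diagonalising matrices must be built from the eigenvectors of $R_{jN+i}$ rather than of $X_{jN+i}$. First I would record the elementary consequences of the hypotheses. Since $x\mapsto\discr R_{jN+i}(x)$ is a polynomial of degree $\le 2N$ converging pointwise on the interval $K$ to a negative limit $h$, the convergence is uniform, so there are $M\in\NN$ and $\delta>0$ with $\discr R_{jN+i}\le-\delta$ on $K$ for $j\ge M$; combined with $(R_{jN+i})\in\calD_1(K,\GL(2,\RR))$ this shows that $\calR_i(x)=\lim_j R_{jN+i}(x)$ has non-real conjugate eigenvalues on $K$, hence is neither upper- nor lower-triangular there and both its off-diagonal entries are bounded away from $0$ on $K$. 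For $j\ge M$ let $\vec v_{jN+i}(x)=(1,w_{jN+i}(x))^{\mathsf T}$ be the eigenvector of $R_{jN+i}(x)$ with first coordinate $1$ belonging to $\rho_{jN+i}(x):=\tfrac12\tr R_{jN+i}(x)+\tfrac i2\sqrt{-\discr R_{jN+i}(x)}$, and put $C_{jN+i}(x)=\bigl[\vec v_{jN+i}(x)\mid\overline{\vec v_{jN+i}(x)}\bigr]$. As $X_{jN+i}=\sigma\Id+a_{(j+1)N+i-1}^{-1}R_{jN+i}$, this same $C_{jN+i}$ diagonalises $X_{jN+i}$, with eigenvalues $\lambda_{jN+i}=\sigma+a_{(j+1)N+i-1}^{-1}\rho_{jN+i}$ and $\overline{\lambda_{jN+i}}$; smoothness of the eigenvector off $\{\discr=0\}$ gives $C_{jN+i}\to C_i$ uniformly on $K$ with $C_i$ invertible, $\sup_K\|C_i^{-1}\|<\infty$, $(C_{jN+i})\in\calD_1(K,\GL(2,\RR))$, and $\sup_K\|C_{(j+1)N+i}-C_{jN+i}\|\lesssim\varepsilon_j$, where $\varepsilon_j:=\sup_K\bigl(\|X_{(j+1)N+i}-X_{jN+i}\|+\|R_{(j+1)N+i}-R_{jN+i}\|\bigr)$ (so $\sum_j\varepsilon_j<\infty$ and the error in \eqref{eq:44} is $c\sum_{j\ge k}\varepsilon_j$).

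Next I would diagonalise the recurrence. For a generalised eigenvector $(u_n)$ associated with $x$, set $\vec u_k=(u_{kN+i-1},u_{kN+i})^{\mathsf T}$, so $\vec u_{k+1}=X_{kN+i}(x)\vec u_k$, and $\vec y_k:=C_{kN+i}(x)^{-1}\vec u_k$ for $k\ge M$; since $\vec u_k$ is real and $C_{kN+i}$ has conjugate columns, $\vec y_k=(c_k,\overline{c_k})^{\mathsf T}$. Then
\[ \vec y_{k+1}=(\Id+G_k)\,\diag\bigl(\lambda_{kN+i},\overline{\lambda_{kN+i}}\bigr)\,\vec y_k,\qquad G_k=C_{(k+1)N+i}^{-1}\bigl(C_{kN+i}-C_{(k+1)N+i}\bigr),\quad \sup_K\|G_k\|\lesssim\varepsilon_k. \]
With $P_k:=\prod_{j=M+1}^{k-1}\lambda_{jN+i}(x)$ (so $P_{M+1}=1$) and $\Pi_k:=\diag(P_k,\overline{P_k})$, the substitution $\vec y_k=\Pi_k\vec z_k$ turns this into $\vec z_{k+1}=(\Id+\widetilde G_k)\vec z_k$, $\widetilde G_k=\Pi_{k+1}^{-1}G_k\Pi_{k+1}$. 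This is where the degeneracy is neutralised: the conjugating diagonal matrices have \emph{both} entries of modulus $|P_{k+1}|$, because $\lambda_{jN+i}$ and $\overline{\lambda_{jN+i}}$ have the common modulus $\sqrt{\det X_{jN+i}}$; hence the off-diagonal entries of $\widetilde G_k$ keep the absolute values of those of $G_k$ and the diagonal entries are unchanged, so $\sup_K\|\widetilde G_k\|\lesssim\varepsilon_k$. Therefore the products $\prod_{j=M+1}^k(\Id+\widetilde G_j)$ converge uniformly on $K$; $\vec z_k$ converges uniformly on $K$ to a limit $\vec z_\infty$ continuous in $x$, with $\sup_K\|\vec z_k-\vec z_\infty\|\lesssim\sum_{j\ge k}\varepsilon_j$; and writing $\vec z_k=(z_k,\overline{z_k})^{\mathsf T}$ (conjugate-symmetry is preserved by the substitution) one has $z_\infty\ne0$.

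Now I would read off the two statements. From $\vec u_{k+1}=X_{kN+i}\vec u_k$, $\vec u_k=c_k\vec v_{kN+i}+\overline{c_k}\,\overline{\vec v_{kN+i}}$ and $X_{kN+i}\vec v_{kN+i}=\lambda_{kN+i}\vec v_{kN+i}$, only the $\vec v_{kN+i}$-term survives in $u_{(k+1)N+i}-\overline{\lambda_{kN+i}}u_{kN+i}$, so
\[ u_{(k+1)N+i}-\overline{\lambda_{kN+i}}\,u_{kN+i}=c_k w_{kN+i}\bigl(\lambda_{kN+i}-\overline{\lambda_{kN+i}}\bigr)=\frac{i\,c_k\,w_{kN+i}\,\sqrt{-\discr R_{kN+i}}}{a_{(k+1)N+i-1}}. \]
Since $c_k=P_k z_k$ and $\prod_{j=M+1}^k\lambda_{jN+i}=P_k\lambda_{kN+i}$, this yields
\[ \frac{a_{(k+1)N+i-1}}{\prod_{j=M+1}^k\lambda_{jN+i}}\Bigl(u_{(k+1)N+i}-\overline{\lambda_{kN+i}}\,u_{kN+i}\Bigr)=\frac{i\,z_k\,w_{kN+i}\,\sqrt{-\discr R_{kN+i}}}{\lambda_{kN+i}}, \]
which converges uniformly on $K$ (as $\lambda_{kN+i}\to\sigma$, $\discr R_{kN+i}\to h$, and $z_k,w_{kN+i}$ converge) and defines a continuous, nowhere-vanishing function; calling it $\varphi$ and taking $(u_n)=(p_n)$ proves \eqref{eq:16}. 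For \eqref{eq:44}, use $p_{kN+i}=2\operatorname{Re}(c_k w_{kN+i})=2\operatorname{Re}(P_k z_k w_{kN+i})$, express $z_\infty w_\infty$ ($w_\infty=\lim w_{kN+i}$) through $\varphi$ by the displayed limit, and insert $|P_k|^2=\prod_{j=M+1}^{k-1}\det X_{jN+i}=a_{(M+1)N+i-1}/a_{kN+i-1}$ and $\arg P_k=\sum_{j=M+1}^{k-1}\arg\lambda_{jN+i}$; a short computation then gives \eqref{eq:44}, the remaining discrepancies — the phase sum reaching $k$ instead of $k-1$ (the extra term $\arg\lambda_{kN+i}\to\tfrac\pi2(1-\sigma)$ accounts precisely for the absence of a factor $\sigma$), $a_{kN+i-1}$ versus $a_{(k+1)N+i-1}$, and replacing $z_k w_{kN+i}$ by its limit — all being $O(a_{(k+1)N+i-1}^{-1})$ and hence absorbable into $E_{kN+i}$ by the bound $a_{(k+1)N+i-1}^{-1}=\sum_{j\ge k}\bigl(a_{(j+1)N+i-1}^{-1}-a_{(j+2)N+i-1}^{-1}\bigr)\lesssim\sum_{j\ge k}\varepsilon_j$ (estimate each term via $X_{jN+i}=\sigma\Id+a_{(j+1)N+i-1}^{-1}R_{jN+i}$ and $\inf_K\|\calR_i\|>0$). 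As $h=\discr\calR_i$ on $K$, this is exactly \eqref{eq:44}.

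The hard part is precisely the degeneracy flagged by the authors: $X_{jN+i}\to\sigma\Id$ is not of the type handled in \cite{SwiderskiTrojan2019}, because the limit carries one eigenvalue of multiplicity two and the two distinct complex eigenvalues of $X_{jN+i}$ coalesce at rate $a_{(j+1)N+i-1}^{-1}$. The device that makes the scheme survive is to diagonalise $R_{jN+i}$ instead of $X_{jN+i}$: its limit $\calR_i$ is genuinely non-degenerate exactly because $\discr\calR_i<0$ on $K$ — this is where the hypothesis $K\subset\Lambda$ enters, and it is the very assumption that fails in the Jordan-block case the authors defer — after which the Levinson step still closes, the equal moduli of $\lambda_{jN+i}$ and $\overline{\lambda_{jN+i}}$ preventing conjugation by the diagonal partial products from inflating the $\ell^1$ perturbation. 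A secondary but indispensable point is the inequality $a_{(k+1)N+i-1}^{-1}\lesssim\sum_{j\ge k}\varepsilon_j$, which is what forces the error term in \eqref{eq:44} into the stated form.
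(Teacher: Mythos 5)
Your proposal is correct and rests on the same two pillars as the paper's own proof: diagonalising $R_{jN+i}$ (whose limit $\calR_i$ has distinct non-real eigenvalues on $K$) rather than the degenerate $X_{jN+i}$, and exploiting that $\lambda_{jN+i}$ and $\overline{\lambda_{jN+i}}$ share the modulus $\sqrt{\det X_{jN+i}}$, so that conjugating the summable perturbation by the diagonal partial products does not inflate it. The only difference is bookkeeping: the paper telescopes the scalar $a_{(m+1)N+i-1}\phi_m$ against an auxiliary frozen-coefficient sequence $\psi_m$ using the identity $a_{(m+1)N+i-1}\bigl(D_m-\overline{\lambda_{mN+i}}\Id\bigr)=\diag(2i\Im\xi_m,0)$, whereas you establish convergence of the conjugated product $\prod(\Id+\widetilde{G}_k)$ and read both \eqref{eq:16} and \eqref{eq:44} off the limit vector --- an equivalent organisation of the same Levinson-type argument, including the needed bound $a_{(k+1)N+i-1}^{-1}\lesssim\sum_{j\geq k}\varepsilon_j$ for the error term.
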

\begin{proof}
	Let us fix a compact interval $K \subset \Lambda$ with non-empty interior. Since $\calR_i$ is the uniform limit of
	$(R_{jN+i} : j \in \NN)$, there are $\delta > 0$ and $M > 0$ such that for all $x \in K$ and $k \geq M$,
	\[
		\discr R_{kN+i}(x) \leq -\delta, \quad \text{and} \quad
		|[R_{kN+i}(x)]_{1,2}| > \delta
	\]
	Therefore, the matrix $R_{kN+i}$ has two eigenvalues $\xi_{k}$ and $\overline{\xi_{k}}$ where
	\begin{equation}
		\label{eq:26}
		\xi_k(x) = \frac{\tr R_{kN+i}(x)}{2} + \frac{i}{2} \sqrt{-\discr{R_{kN+i}(x)}}.
	\end{equation}
	Let us next observe that for $k \geq M$,
	\[
		\Im \xi_{k}(x) = \tfrac{1}{2} \sqrt{-\discr{R_{kN+i}(x)}} \geq \tfrac{1}{2} \sqrt{\delta}.
	\]
	Moreover,
	\[
		R_{kN+i}(x) = C_k(x) \tilde{D}_k(x) C_k^{-1}(x)
	\]
	where
	\[
		C_k = 
		\begin{pmatrix}
			1 & 1 \\
			\frac{\xi_{kN+i}-[R_{kN+i}]_{1,1}}{[R_{kN+i}]_{1,2}} & \frac{\overline{\xi_{kN+i}}-[R_{kN+i}]_{1,1}}{[R_{kN+i}]_{1,2}}
		\end{pmatrix}
		,
		\qquad\text{and}\qquad
		\tilde{D}_k(x) = 
		\begin{pmatrix}
			\xi_{k} & 0 \\
			0 & \overline{\xi_{k}}
		\end{pmatrix}.
	\]
	Since
	\[
		X_{kN+i} = \sigma \Id + \frac{1}{a_{(k+1)N+i-1}} R_{kN+i},
	\]	
	we obtain
	\[
		X_{kN+i}(x) = C_k(x) D_k(x) C_k^{-1}(x)
	\]
	where
	\[
		D_k = \sigma \Id + \frac{1}{a_{(k+1)N+i-1}} \tilde{D}_k.
	\]
	In particular, $X_{kN+i}$ has two eigenvalues $\lambda_{kN+i}$ and $\overline{\lambda_{kN+i}}$ where
	\[
		\lambda_{kN+i} = \sigma+ \frac{1}{a_{(k+1)N+i-1}} \xi_k.
	\]
	We next set
	\[
		\phi_{k} = \frac{p_{(k+1)N+i} - \overline{\lambda_{kN+i}} p_{kN+i}}{\prod_{j = M+1}^k \lambda_{jN+i}},
	\]
	and claim that the following holds true.
	\begin{claim}
		\label{clm:2}
		There is $c > 0$ such that for all $m \geq n \geq M$, and $x \in K$,
		\[
			\big|a_{(m+1)N+i-1} \phi_{m}(x) - a_{(n+1)N+i-1}\phi_{n}(x) \big|
			\leq
			c \Big(
			\sum_{j = n}^\infty \big| \xi_{j+1}(x) - \xi_{j}(x) \big|
			+
			\sum_{j = n}^\infty \big| \lambda_{(j+1)N+i}(x) - \lambda_{jN+i}(x)\big|\Big).
		\]
	\end{claim}
	We start by writing
	\[
		p_{mN+i} = 
		\left\langle 
		C_{m-1} \Big(\prod_{j = n}^{m-1} D_j C_j^{-1} C_{j-1} \Big) C_{n-1}^{-1}
		\begin{pmatrix}
			p_{nN+i-1} \\
			p_{nN+i}
		\end{pmatrix}
		,
		\begin{pmatrix}
			0 \\
			1
		\end{pmatrix}
		\right\rangle.
	\]
	Let us introduce two auxiliary functions
	\[
		q_m = \left\langle
		C_\infty 
		\Big(\prod_{j = n}^{m-1} D_j \Big) C_{n-1}^{-1}
        \begin{pmatrix}
            p_{nN+i-1} \\
            p_{nN+i}
        \end{pmatrix}
        ,
        \begin{pmatrix}
            0 \\
            1
        \end{pmatrix}
		\right\rangle,
	\]
	and
	\[
		\psi_m = \frac{q_{m+1} - \overline{\lambda_{mN+i}} q_m}{\prod_{j = M+1}^m \lambda_{jN+i}}.
	\]
	Notice that 
	\[
		a_{(m+1)N+i-1} \big( \phi_m - \psi_m \big) = 
		\left\langle
		Y_m 
		\begin{pmatrix}
			p_{nN+i-1} \\
			p_{nN+i}
		\end{pmatrix}
		,
		\begin{pmatrix}
			0 \\
			1
		\end{pmatrix}
		\right\rangle
		\prod_{j = M+1}^m \frac{1}{\lambda_{jN+i}}
	\]
	where
	\begin{align*}
		Y_m 
		&= (C_m - C_{\infty}) D_m^{-1} 
		\Big( a_{(m+1)N+i-1} \big( D_m - \overline{\lambda_{mN+i}} \Id \big) \Big) 
		\Big( \prod_{j=n}^{m} D_j C_j C_{j-1}^{-1} \Big) C_{n-1}^{-1} \\
		&\phantom{=}+ C_\infty D_m^{-1} \Big( a_{(m+1)N+i-1} \big( D_m - \overline{\lambda_{mN+i}} \Id \big) \Big) 
		\Big(\prod_{j=n}^{m} D_j C_j C_{j-1}^{-1} - \prod_{j=n}^{m} D_j \Big) C_{n-1}^{-1}.
	\end{align*}	
	In view of \cite[Propositon 1]{SwiderskiTrojan2019}, we have
	\[
		\norm{Y_m} 
		\lesssim 
		\bigg(\prod_{j = n}^{m} \norm{D_j} \bigg) 
		\Big\| a_{(m+1)N+i-1} \big( D_m - \overline{\lambda_{mN+i}} \Id \big) \Big\|
		\bigg(\sum_{j = n-1}^\infty \norm{\Delta C_j} + \norm{C_\infty - C_{m}}\bigg).
	\]
	Since
	\begin{equation} 
		\label{eq:10}
		a_{(m+1)N+i-1} \big( D_m - \overline{\lambda_{mN+i}} \Id) =
		\begin{pmatrix}
			2 i \Im \xi_{m} & 0 \\
			0 & 0
		\end{pmatrix},
	\end{equation}
	the right-hand side is convergent, hence bounded. Therefore, we have
	\begin{align*}
		\norm{Y_m}
		&\lesssim \bigg(\prod_{j = n}^{m} \norm{D_j} \bigg) \sum_{j = n-1}^\infty 
		\norm{\Delta C_j} \\
		&\lesssim
		\prod_{j=n}^{m} \abs{\lambda_{jN+i}} \cdot 
		\sum_{j=n-1}^\infty \big|\xi_{j+1} - \xi_{j}\big|.
	\end{align*}
	Following the arguments used in the proof of \cite[Claim 2]{SwiderskiTrojan2019}, we conclude that there is $c > 0$
	so that for all $n > M$ and $x \in K$,
	\begin{equation} 
		\label{eq:27}
		\frac{\sqrt{p^2_{nN+i}(x) + p^2_{nN+i-1}(x)}}{\prod_{j = M+1}^{n-1} \abs{\lambda_{jN+i}(x)}}
		\leq c,
	\end{equation}
	and consequently, for all $m \geq n > M$,
	\begin{equation}
		\label{eq:8}
		a_{(m+1)N+i-1}
		\big| \phi_{m} - \psi_m \big|
		\lesssim
		\sum_{j = n-1}^\infty
		\big| \xi_{j+1} - \xi_{j} \big|.
	\end{equation}
	We next notice that
	\begin{align*}
		&
		a_{(m+1)N+i-1} \big( q_{m+1} - \overline{\lambda_{mN+i}} q_m \big) \\
		&\qquad\qquad=
		\left\langle
		C_\infty 
		\Big( a_{(m+1)N+i-1} \big(D_m - \overline{\lambda_{mN+i}} \Id \big) \Big) 
		\Big( \prod_{j = n}^{m-1} D_j \Big) C_{n-1}^{-1}
		\begin{pmatrix}
            p_{nN+i-1} \\
            p_{nN+i}
        \end{pmatrix},
        \begin{pmatrix}
            0 \\
            1
        \end{pmatrix}
		\right\rangle.
	\end{align*}
	Since by \eqref{eq:10}
	\[
		\frac{a_{(m+1)N+i-1}}{\prod_{j = n}^m \lambda_{jN+i}}
		\Big(D_m - \overline{\lambda_{mN+i}} \Id\Big) \prod_{j = n}^{m-1} D_j
		=
		\frac{2 i}{\lambda_{mN+i}}
		\begin{pmatrix}
			\Im \xi_{m} & 0 \\
			0 & 0 
		\end{pmatrix},
	\]
	we obtain
	\begin{align*}
		a_{(m+1)N+i-1}
		\big|\psi_m - \psi_n \big| 
		&\lesssim
		\bigg| \frac{\Im \xi_m}{\lambda_{mN+i}} - \frac{\Im \xi_{n}}{\lambda_{nN+i}} \bigg| \\
		&\leq 
		\sum_{j = n}^\infty 
		\bigg| \frac{\Im \xi_{j+1}}{\lambda_{(j+1)N+i}} - \frac{\Im \xi_{j}}{\lambda_{jN+i}} \bigg|.
	\end{align*}
	Observe that
	\begin{align*}
		\bigg| \frac{\Im \xi_{j+1}}{\lambda_{(j+1)N+i}} - \frac{\Im \xi_{j}}{\lambda_{jN+i}} \bigg|
		&= 
		\bigg| \frac{\lambda_{jN+i} \Im (\xi_{j+1} - \xi_j) - (\lambda_{(j+1)N+i} - \lambda_{jN+i}) \Im \xi_j}
		{\lambda_{(j+1)N+i} \lambda_{jN+i}}\bigg| \\
		&\lesssim
		|\xi_{j+1} - \xi_j| + |\lambda_{(j+1)N+i} - \lambda_{jN+i}|.
	\end{align*}
	Hence,
	\[
		a_{(m+1)N+i-1} \big|\psi_m - \psi_n \big| 
		\lesssim 
		\sum_{j=n}^\infty \Big( \big| \xi_{j+1} - \xi_j \big| + \big| \lambda_{(j+1)N+i} - \lambda_{jN+i} \big| \Big),
	\]
	which together with \eqref{eq:8} implies that for all $m \geq n > M$ and $x \in K$,
	\[
		\big|a_{(m+1)N+i-1} \phi_{m}(x) - a_{(n+1)N+i-1} \phi_{n}(x) \big| 
		\lesssim
		\sum_{j=n}^\infty \Big( \big| \xi_{j+1}(x) - \xi_j(x) \big| 
		+ \big| \lambda_{(j+1)N+i}(x) - \lambda_{jN+i}(x) \big| \Big),
	\]
	proving Claim \ref{clm:2}.

	In particular, Claim \ref{clm:2} entails that the sequence $(a_{(m+1)N+i-1} \phi_{m} : m \in \NN)$ converges. 
	Let us denote by $\vphi$ its limit. Hence,
	\[
		\big| \vphi(x) - a_{(n+1)N+i-1} \phi_{n}(x) \big| 
		\lesssim
		\sum_{j=n}^\infty \Big( \big| \xi_{j+1}(x) - \xi_j(x) \big| 
		+ \big| \lambda_{(j+1)N+i}(x) - \lambda_{jN+i}(x) \big| \Big)
	\]
	Since polynomials $p_n$ are having real coefficients, by taking imaginary part we obtain
	\begin{align*}
		&
		\bigg|
		\frac{a_{(n+1)N+i-1}}{2} \sqrt{-\discr{X_{nN+i}(x)}} \frac{p_{nN+i}(x)}{\prod_{j=M+1}^n |\lambda_{jN+i}(x)|} 
		-
		\abs{\vphi(x)} \sin\Big(\sum_{j = M+1}^n \arg \lambda_{jN+i}(x) + \arg \vphi(x) \Big)
		\bigg| \\
		&\qquad\qquad
		\lesssim
		\sum_{j=n}^\infty \Big( \big| \xi_{j+1}(x) - \xi_j(x) \big| 
		+ \big| \lambda_{(j+1)N+i}(x) - \lambda_{jN+i}(x) \big| \Big).
	\end{align*}
	Observe that
	\[
		\det X_{jN+i} = \prod_{k = jN+i}^{(j+1)N+i-1} \det B_k = \frac{a_{jN+i-1}} {a_{(j+1)N+i-1}},
	\]
	thus
	\begin{align*}
		\prod_{j = M+1}^n \big|\lambda_{jN+i} \big|^2 
		&= \prod_{j = M+1}^n \det X_{jN+i} \\
		&= \frac{a_{(M+1)N+i-1}}{a_{(n+1)N+i-1}}.
	\end{align*}
	Moreover,
	\begin{align*}
		\discr X_{nN+i} &= 
		(\lambda_{nN+i} - \overline{\lambda_{nN+i}})^2 \\
		&= 
		\frac{1}{a_{(n+1)N+i-1}^2} (\xi_{n} - \overline{\xi_n})^2
		=
		\frac{1}{a_{(n+1)N+i-1}^2} \discr R_{nN+i}.
	\end{align*}
	Therefore,
	\begin{align*}
		&\bigg|
		\sqrt{\frac{a_{(n+1)N+i-1}}{a_{(M+1)N+i-1}}} \sqrt{-\discr R_{nN+i}(x)} p_{nN+i}(x) 
		-
		2 |\vphi(x)| 
		\sin\Big(\sum_{j = M+1}^n \arg \lambda_{j N + i}(x) + \arg \vphi(x)\Big)
		\bigg| \\
		&\qquad\qquad\qquad\qquad
		\lesssim
		\sum_{j=n}^\infty \Big( \big| \xi_{j+1} - \xi_j \big| + \big| \lambda_{(j+1)N+i} - \lambda_{jN+i} \big| \Big).
	\end{align*}
	By \eqref{eq:26}, we can write
	\[
		\bigg|
		\frac{1}{\sqrt{-\discr R_{nN+i}(x)}} - \frac{1}{\sqrt{-\discr \calR_i(x)}} \bigg|
		\lesssim
		\sum_{j = n}^\infty \big| \xi_{j+1} - \xi_{j} \big|,
	\]
	thus
	\begin{align*}
		&\bigg| \sqrt{\frac{a_{(n+1)N+i-1}}{a_{(M+1)N+i-1}}} p_{nN+i}(x) - \frac{2 |\vphi(x)|}{\sqrt{-\discr \calR_i(x)}} 
		\sin \bigg( \sum_{j=M+1}^n \arg \lambda_{j N + i}(x) + \arg \vphi(x) \bigg) \bigg| \\
		&\qquad\qquad\qquad\qquad
		\lesssim
		\sum_{j=n}^\infty \Big( \big| \xi_{j+1} - \xi_j \big| + \big| \lambda_{(j+1)N+i} - \lambda_{jN+i} \big| \Big).
	\end{align*}
	Finally, we have
	\[
		\big| \lambda_{(j+1)N+i} - \lambda_{jN+i} \big|
		\lesssim
		\big\|X_{(j+1)N+i} - X_{jN+i} \big\|
	\]
	and
	\[
		\big| \xi_{j+1} - \xi_{j} \big|
		\lesssim
		\big\|R_{(j+1)N+i} - R_{jN+i} \big\|
	\]
	and the theorem follows.
\end{proof}

Our next task is to compute $\abs{\vphi(x)}$. To do this, once again, we use the truncated sequences
defined in \eqref{eq:42a} and \eqref{eq:42b}.  
\begin{theorem} 
	\label{thm:5}
	Let $N$ be a positive integer, $\sigma \in \{-1, 1\}$, and $i \in \{0, 1, \ldots, N-1 \}$. Suppose that
	\[
		\label{eq:22}
		\lim_{n \to \infty} a_{nN+i-1} = \infty \qquad \text{and} \qquad 
		\lim_{n \to \infty} (a_{(n+1)N+i-1} - a_{nN+i-1}) = 0.
	\]
	Let $K$ be a compact interval with non-empty interior contained in
	\[
		\Lambda = \left\{ x \in \RR : \lim_{n \to \infty} \discr R_{nN+i}(x) \ \text{exists and is negative} \right\}
	\]
	where
	\begin{equation} 
		\label{eq:23}
		R_n = a_{n+N-1} ( X_n - \sigma \Id ).
	\end{equation}
	If
	\[
		(X_{nN+i} : n \in \NN), (R_{nN+i} : n \in \NN) \in \calD_1 \big( K; \GL(2, \RR) \big),
	\]
	then the polynomials $(p_n : n \in \NN_0)$ are orthonormal with respect to the measure $\mu$, which
	is purely absolutely continuous on $K$. Moreover, there is $M>0$ and a continuous real-valued function $\eta$ such
	that
	\[
		\sqrt{a_{(n+1)N+i-1}} p_{nN+i}(x) 
		= 
		\sqrt{\frac{2 \big| [\calR_i(x)]_{2, 1} \big|}{\pi \mu'(x) \sqrt{-\discr \calR_i(x)}} }
		\sin \bigg( \sum_{j=M+1}^n \theta_{j}(x) + \eta(x) \bigg)
		+ E_{nN+i}(x)
	\]
	where $\calR_i$ is the limit of $(R_{nN+i} : n \in \NN)$, 
	\[
		\theta_{j}(x) = \arccos \bigg( \frac{\tr X_{jN+i}(x)}{2 \sqrt{\det X_{jN+i}(x)}} \bigg)
	\]
	and 
	\[
		\sup_K |E_{nN+i}| \leq c 
		\sum_{j=n}^\infty 
		\Big( 
		\sup_K \big\| X_{(j+1)N+i} - X_{jN+i} \big\| + 
		\sup_K \big\| R_{(j+1)N+i} - R_{jN+i} \big\| 
		\Big).
	\]
\end{theorem}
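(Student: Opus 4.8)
The conclusions of this statement are of two kinds: the qualitative ones (that $\mu$ is the orthogonality measure and is purely absolutely continuous on $K$, with continuous positive density) are already essentially available, and the only genuinely new ingredient is the explicit value of $|\vphi|$ appearing in Theorem~\ref{thm:1}. So the plan is: first assemble the qualitative facts, then compute $|\vphi|$, and finally rewrite \eqref{eq:44}. For the first part, note that the present hypotheses contain those of Theorem~\ref{thm:1} as well as the condition $\lim_{n\to\infty}(a_{(n+1)N+i-1}-a_{nN+i-1})=0$, so Corollary~\ref{cor:1} applies (the truncated sequences of \eqref{eq:42a}--\eqref{eq:42b} along $L_j=jN+i$ satisfy \eqref{eq:50a}--\eqref{eq:50b}): $(p_n)$ is orthonormal with respect to a unique $\mu$, purely absolutely continuous on $K$, and, combining Theorems~\ref{thm:3} and~\ref{thm:4} with Corollary~\ref{cor:1}, the Tur\'an limit
\[
	g(x):=\lim_{n\to\infty}a_{(n+1)N+i-1}^2\big|\scrD_{nN+i}(x)\big|=\frac{\sqrt{-\discr\calR_i(x)}}{2\pi\,\mu'(x)},\qquad x\in K,
\]
exists uniformly on $K$, where $\calR_i=\lim_n R_{nN+i}$. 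Theorem~\ref{thm:1} supplies $M$, the continuous function $\vphi$, and formula \eqref{eq:44}; moreover $\arg\lambda_{jN+i}(x)=\arccos\big(\tr X_{jN+i}(x)/(2\sqrt{\det X_{jN+i}(x)})\big)=\theta_j(x)$ since $|\lambda_{jN+i}|^2=\det X_{jN+i}$ and $\Re\lambda_{jN+i}=\tfrac12\tr X_{jN+i}$.

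The core step is the identity $g(x)=|\vphi(x)|^2\,a_{(M+1)N+i-1}\big/\big|[\calR_i(x)]_{2,1}\big|$. I would prove it by evaluating $\lim_n a_{(n+1)N+i-1}^2\scrD_{nN+i}(x)$ in the eigenbasis used in the proof of Theorem~\ref{thm:1}. Writing $w_n=\binom{p_{nN+i-1}(x)}{p_{nN+i}(x)}$ one has $\scrD_{nN+i}=\langle E X_{nN+i}w_n,w_n\rangle$, and since $X_{nN+i}=\sigma\Id+a_{(n+1)N+i-1}^{-1}R_{nN+i}$ while $E$ is antisymmetric, $\scrD_{nN+i}=a_{(n+1)N+i-1}^{-1}\langle E R_{nN+i}w_n,w_n\rangle$. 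Decomposing $w_n=A_n v_n+\overline{A_n}\,\overline{v_n}$ in the eigenbasis of $R_{nN+i}$ (the same as that of $X_{nN+i}$), with $v_n=\binom{1}{s_n}$ the eigenvector for $\xi_n=\tfrac12\tr R_{nN+i}+\tfrac{i}{2}\sqrt{-\discr R_{nN+i}}$, the identity $v^T E v=0$ kills both ``diagonal'' contributions and leaves $\langle E R_{nN+i}w_n,w_n\rangle=4|A_n|^2\,\Im\xi_n\,\Im s_n$. From \eqref{eq:16} one extracts $A_n\big/\prod_{j=M+1}^n\lambda_{jN+i}\to A_\infty$ with $|A_\infty|=|\vphi|\big/(|s|\sqrt{-\discr\calR_i})$, $s=\lim_n s_n$; using $\prod_{j=M+1}^n|\lambda_{jN+i}|^2=\prod_{j=M+1}^n\det X_{jN+i}=a_{(M+1)N+i-1}/a_{(n+1)N+i-1}$ gives $a_{(n+1)N+i-1}|A_n|^2\to a_{(M+1)N+i-1}|A_\infty|^2$, hence $g(x)=2a_{(M+1)N+i-1}|A_\infty|^2\sqrt{-\discr\calR_i}\;\Im s$. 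Finally, from $\calR_i v=\xi_\infty v$ one gets $s=(\xi_\infty-[\calR_i]_{1,1})/[\calR_i]_{1,2}$, so $\Im s=\tfrac12\sqrt{-\discr\calR_i}\big/[\calR_i]_{1,2}$ and, using $([\calR_i]_{1,1}-[\calR_i]_{2,2})^2-\discr\calR_i=-4[\calR_i]_{1,2}[\calR_i]_{2,1}$ (so $[\calR_i]_{1,2}[\calR_i]_{2,1}<0$ on $K$ because $\discr\calR_i<0$), $|s|^2=-[\calR_i]_{2,1}/[\calR_i]_{1,2}$. Therefore $\Im s/|s|^2=\sqrt{-\discr\calR_i}\big/(2|[\calR_i]_{2,1}|)$, which yields the claimed identity and hence $|\vphi(x)|^2=\sqrt{-\discr\calR_i(x)}\,|[\calR_i(x)]_{2,1}|\big/(2\pi a_{(M+1)N+i-1}\mu'(x))$.

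With $|\vphi|$ in hand, the statement follows by multiplying \eqref{eq:44} through by $\sqrt{a_{(M+1)N+i-1}}$: the factor $a_{(M+1)N+i-1}$ cancels against the one inside $|\vphi|^2$, so the amplitude $2\sqrt{a_{(M+1)N+i-1}}|\vphi(x)|\big/\sqrt{-\discr\calR_i(x)}$ becomes exactly $\sqrt{2|[\calR_i(x)]_{2,1}|\big/(\pi\mu'(x)\sqrt{-\discr\calR_i(x)})}$; one sets $\eta=\arg\vphi$ (continuous, real-valued) and absorbs the fixed constant $\sqrt{a_{(M+1)N+i-1}}$ into the error bound, noting that $\discr\calR_i=\lim\discr R_{nN+i}$ equals the function $h$ of \eqref{eq:12}. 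Positivity and continuity of $\mu'$ on $K$, and $[\calR_i]_{2,1}\neq0$ on $K$, make the amplitude well defined.

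The main obstacle is precisely the middle paragraph: converting the error-controlled \emph{vector} asymptotics \eqref{eq:16} into a clean limit for the oscillatory scalar $a_{(n+1)N+i-1}^2\scrD_{nN+i}$. A priori $\langle E R_{nN+i}w_n,w_n\rangle$ oscillates with $n$; what rescues the argument is the cancellation $v^TEv=0$, which collapses it to $4|A_n|^2\Im\xi_n\Im s_n$, a quantity that stabilizes after multiplication by $a_{(n+1)N+i-1}$. One must also carry the error bound of \eqref{eq:16} through the replacements of $v_n,\xi_n,s_n$ by their limits while keeping it of the same order as in Theorem~\ref{thm:1}. Running the whole computation on the truncated sequences \eqref{eq:42a}--\eqref{eq:42b} — where $\scrD^{L}_n$ is eventually constant and the density is given exactly by \eqref{eq:55} — is a convenient way to organize this bookkeeping and is the natural place to feed in the convergence $\mu'_{jN+i}\to\mu'$ from Corollary~\ref{cor:1}.
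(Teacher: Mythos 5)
Your proposal is correct, and the interesting middle step is organized differently from the paper. The paper identifies $|\vphi|$ by running everything through the truncated Jacobi parameters \eqref{eq:42a}--\eqref{eq:42b}: it first shows (Claim~\ref{clm:3}) that $a_{L_k+N-1}\phi^{L_k}_{L_k+N}\to\vphi$ by a direct matrix comparison of $\phi_{L_k+N}$ with $\phi^{L_k}_{L_k+N}$, then imports the exact formula $|\phi^L_{L+N}|^2=|[X^L_{L+N}]_{2,1}|\sqrt{-\discr X^L_{L+N}}\big/\bigl(2\pi\mu_L' a_{(M+1)N+i-1}\bigr)$ from the eventually periodic case (Claim 5 of \cite{SwiderskiTrojan2019}), and passes to the limit via Proposition~\ref{prop:1} and Corollary~\ref{cor:1}. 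You instead evaluate $\lim_n a_{(n+1)N+i-1}^2\scrD_{nN+i}$ directly on the original sequence in the eigenbasis of $R_{nN+i}$ and match it against the density formula of Theorem~\ref{thm:4}; I checked the algebra and it closes: $w^TERw=4|A|^2\Im\xi\,\Im s$ by antisymmetry of $E$ (the $\sigma\Id$ part of $X$ drops out for the same reason), $a_{(n+1)N+i-1}|A_n|^2\to a_{(M+1)N+i-1}|A_\infty|^2$ via $\prod|\lambda_{jN+i}|^2=a_{(M+1)N+i-1}/a_{(n+1)N+i-1}$, the relation $a_{(k+1)N+i-1}(p_{(k+1)N+i}-\overline{\lambda_{kN+i}}p_{kN+i})=i A_k s_k\sqrt{-\discr R_{kN+i}}$ ties $A_\infty$ to $\vphi$, and $\Im s/|s|^2=\sqrt{-\discr\calR_i}\big/(2|[\calR_i]_{2,1}|)$ gives exactly $g=a_{(M+1)N+i-1}|\vphi|^2/|[\calR_i]_{2,1}|$, hence the paper's Claim~\ref{clm:4}. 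What each route buys: the paper's version gets the amplitude from an exact identity valid for every truncation (so no oscillatory limit has to be evaluated), at the cost of the extra approximation step Claim~\ref{clm:3} and an external reference; yours is more self-contained within this paper (it leans only on Theorems~\ref{thm:3} and~\ref{thm:4}, with the truncation hidden inside Theorem~\ref{thm:4}) and yields $\vphi\neq 0$ on $K$ as a byproduct, since $g>0$ and $[\calR_i]_{1,2}[\calR_i]_{2,1}<0$. One remark: your worry about ``carrying the error bound of \eqref{eq:16} through the replacements'' is unnecessary — the error term $E_{nN+i}$ in the final display comes verbatim from \eqref{eq:44} multiplied by the constant $\sqrt{a_{(M+1)N+i-1}}$, and the Tur\'an limit is used only to identify the constant $|\vphi(x)|$, for which a plain (even pointwise) limit suffices.
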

\begin{proof}
	Since $K \subset \Lambda$, and $\discr R_{kN+i}$ is a polynomial of degree at most $2N$, there
	are $\delta > 0$ and $M \geq 1$ such that for all $x \in K$ and $n \geq M$,
	\[
		\discr R_{nN+i}(x) \leq -\delta.
	\]
	Let us consider $L \in \{L_k : k \in \NN\}$ where
	\[
		L_k = kN+i.
	\]
	We set
	\[
		\Lambda_L = \big\{x \in \RR : \discr \big(R^L_{L+N}(x) \big) < 0 \big\}
	\]
	where
	\begin{equation} 
		\label{eq:24}
		R^L_n = a^L_{n+N-1} \big( X^L_{n} - \sigma \Id \big),
	\end{equation}
	and $X^L_{n}$ is defined by the formula \eqref{eq:43}. In view of \eqref{eq:42a} and \eqref{eq:42b}, we have
	\begin{equation} 
		\label{eq:21}
		X^{L_k}_{jN+i} = 
		\begin{cases}
			X_{jN+i} & \text{if } 0 \leq j \leq k, \\
			X_{L_k+N}^{L_k} & \text{if } k < j. 
		\end{cases}
	\end{equation}
	Moreover, by Proposition~\ref{prop:1}, there is $L_0 \geq M$ such that $K \subset \Lambda_L$ for all 
	$L \geq L_0$. For $x \in K$ we set
	\begin{align} 
		\label{eq:26a}
		\xi_{m}(x) &= \frac{1}{2} \tr R_{mN+i}(x) + \frac{i}{2} \sqrt{-\discr R_{mN+i}(x)}, \\
		\label{eq:26b}
		\xi_{m}^L(x) &= \frac{1}{2} \tr R^L_{mN+i}(x) + \frac{i}{2} \sqrt{-\discr R^L_{mN+i}(x)},
	\end{align}
	and
	\begin{align}
		\label{eq:25a}
		\lambda_{mN+i}(x) &= \gamma + \frac{1}{a_{(m+1)N+i}} \xi_m(x), \\
		\label{eq:25b}
		\lambda^L_{mN+i}(x) &= \gamma + \frac{1}{a_{(m+1)N+i}} \xi^L_m(x).
	\end{align}
	Lastly, we define
	\begin{align*}
		\phi_{mN+i}(x) &= \frac{p_{(m+1)N+i}(x) - \overline{\lambda_{mN+i}(x)} p_{mN+i}(x)}
		{\prod_{j=M+1}^m \lambda_{jN+i}(x)} \\
		\phi_{mN+i}^L(x) &= \frac{p^L_{(m+1)N+i}(x) - \overline{\lambda_{mN+i}^L(x)} p^L_{mN+i}(x)}
		{\prod_{j = M+1}^m \lambda^L_{jN+i}}
	\end{align*}
	where $(p^L_n : n \in \NN_0)$ is the sequence of orthogonal polynomials corresponding to \eqref{eq:42a} and
	\eqref{eq:42b}.
	\begin{claim}
		\label{clm:3}
		\[
			\lim_{k \to \infty} 
			\sup_{x \in K} 
			\Big| a_{L_k+N-1} \phi^{L_k}_{L_k+N}(x) - \varphi(x) \Big| = 0.
		\]
	\end{claim}
	First, let us observe that, by \eqref{eq:21}, we have
	\begin{align*}
		\phi_{L_k+N} &= 
		\frac{1}{\lambda_{L_k+N} \prod_{j=M+1}^k \lambda_{jN+i}}
		\bigg\langle
		\bigg( X_{L_k+N} - \overline{\lambda_{L_k+N}} \Id \bigg)
		\begin{pmatrix}
			p_{L_k+N-1} \\
			p_{L_k+N}
			\end{pmatrix},
		\begin{pmatrix}
			0 \\ 
			1
		\end{pmatrix}
		\bigg\rangle \\
		\intertext{and}
		\phi^{L_k}_{L_k+N} &= 
		\frac{1}{\lambda^{L_k}_{L_k+N} \prod_{j=M+1}^k \lambda_{jN+i}}
		\bigg\langle
		\bigg( X^{L_k}_{L_k+N} - \overline{\lambda^{L_k}_{L_k+N}} \Id \bigg)
		\begin{pmatrix}
			p_{L_k+N-1} \\
			p_{L_k+N}
		\end{pmatrix},
		\begin{pmatrix}
			0 \\ 
			1
		\end{pmatrix}
		\bigg\rangle.
	\end{align*}
	Thus,
	\[
		a_{L_k+2N-1} \phi_{L_k+N} - a^{L_k}_{L_k+2N-1} \phi^{L_k}_{L_k+N} =
		\frac{1}{\prod_{j=M+1}^k \lambda_{jN+i}}
		\bigg\langle Y_k
		\begin{pmatrix}
			p_{L_k+N-1} \\
			p_{L_k+N}
		\end{pmatrix},
		\begin{pmatrix}
			0 \\ 
			1
		\end{pmatrix}
		\bigg\rangle,
	\]
	where
	\[
		Y_k = 
		\frac{a_{L_k+2N-1}}{\lambda_{L_k+N}} 
		\Big( X_{L_k+N} - \overline{\lambda_{L_k+N}} \Id \Big) -
		\frac{a_{L_K+2N-1}^{L_k}}{\lambda^{L_k}_{L_k+N}} 
		\big( X^{L_k}_{L_k+N} - \overline{\lambda^{L_k}_{L_k+N}} \Id \big).
	\]
	Hence, by \eqref{eq:23}, \eqref{eq:24}, \eqref{eq:25a} and \eqref{eq:25b}
	\[
		Y_k = 
		\frac{1}{\lambda_{L_k+N}} \Big( R_{L_k+N} - \overline{\xi_{L_k+N}} \Id \Big) - 
		\frac{1}{\lambda^{L_k}_{L_k+N}} \Big( R^{L_k}_{L_k+N} - \overline{\xi^{L_k}_{L_k+N}} \Id \Big),
	\]
	and consequently,
	\[
		\| Y_k \| \leq 
		\Bigg|\frac{1}{\lambda_{L_k+N}} - \frac{1}{\lambda^{L_k}_{L_k+N}} \Bigg| \cdot \Big\| R_{L_k+N} \Big\| +
		\frac{1}{\big|\lambda^{L_k}_{L_k+N} \big|} \cdot \Big\| R_{L_k+N} - R^{L_k}_{L_k+N} \Big\| +
		\Bigg| 
		\frac{\overline{\xi^{L_k}_{L_k+N}}}{\lambda^{L_k}_{L_k+N}} 
		- 
		\frac{\overline{\xi_{L_k+N}}}{\lambda_{L_k+N}}
		\Bigg|.
	\]
	Since $(R_{L_k} : k \in \NN)$ is convergent, Proposition~\ref{prop:1} together with \eqref{eq:26a}--\eqref{eq:25b}
	implies that
	\[
		\lim_{k \to \infty} \sup_K \| Y_k \| = 0.
	\]
	Hence, by \eqref{eq:27}
	\[
		\lim_{k \to \infty} \sup_K \Big|a_{L_k+2N-1} \phi_{L_k+N} - a^{L_k}_{L_k+2N-1} \phi^{L_k}_{L_k+N}\Big| = 0,
	\]
	and the conclusion follows by \eqref{eq:16}.
	\begin{claim} 
		\label{clm:4}
		For $x \in K$, we have
		\[
			\big| \vphi(x) \big|^2 =
			\frac{| [\calR_i(x)]_{2, 1} | \sqrt{-\discr \calR_i(x)}} {2 \pi \mu'(x) a_{(M+1)N+i-1}}.
		\]
	\end{claim}
	In view of \cite[Claim 5]{SwiderskiTrojan2019} we can write
	\[
		\big| \phi_{L+N}^L(x) \big|^2 =
		\frac{| [ X_{L+N}^L(x) ]_{2, 1} | \sqrt{-\discr\big( X_{L+N}^L(x) \big)}}
		{2 \pi \mu_L'(x) a_{(M+1)N+i-1}}.
	\]
	Hence,
	\[
		\big| a^L_{L+2N-1} \phi_{L+N}^L(x) \big|^2 = 
		\frac{|[R_{L+N}^L(x)]_{2, 1} | \sqrt{-\discr\big( R_{L+N}^L(x) \big)}}
		{2 \pi \mu_L'(x) a_{(M+1)N+i-1}}
	\]
	which, by Proposition~\ref{prop:1} and Corollary~\ref{cor:1}, approaches to 
	\[
		\frac{| [ \calR_i(x) ]_{2, 1} | \sqrt{-\discr \calR_i(x) }}
		{2 \pi \mu'(x) a_{(M+1)N+i-1}}
	\]
	as $L$ tends to infinity, uniformly with respect to $x \in K$.
	Thus, the conclusion follows by Claim~\ref{clm:3}.

	Now, to finish the proof of the theorem it is enough to combine Claim \ref{clm:4} with \eqref{eq:44}.
\end{proof}

\section{Christoffel functions in residue classes}
\label{sec:5}
\begin{lemma} 
	\label{lem:1}
	Let $(\gamma_n : n \in \NN)$ be a sequence of positive numbers such that
	\[
		\sum_{n=0}^\infty \gamma_n = \infty, \qquad\text{and}\qquad
		\lim_{n \to \infty} \gamma_n = 0.
	\]
	Suppose that there are a compact set $K \subset \RR^d$, and $(\xi_n : n \in \NN)$ a sequence of real functions
	on $K$ such that
	\[
		\lim_{n \to \infty} \sup_{x \in K} \bigg| \frac{\xi_n(x)}{\gamma_n} - \psi(x) \bigg| = 0
	\] 
	for some function $\psi: K \rightarrow [0, \infty)$ satisfying
	\[
		c^{-1} \leq \psi(x) \leq c, \qquad\text{for all } x \in K.
	\]
	We set
	\[
		\Xi_n(x) = \sum_{j = 0}^n \xi_j(x), \qquad\text{and}\qquad
		\Gamma_n = \sum_{j = 0}^n \gamma_j.
	\]
	Then for any $f \in \calC^1([0, \infty))$ such that both $f$ and $f'$ are bounded on $[0, \infty)$,
	\[
		\lim_{n \to \infty}
		\frac{1}{\Gamma_n}
		\sum_{k=0}^n \gamma_k
		f (\Xi_k(x))
		=
		\lim_{n \to \infty}
		\frac{1}{\Xi_n(x)} \int_0^{\Xi_n(x)} f(t) \ud t
	\]
	uniformly with respect to $x \in K$ provided that the right-hand side exists.
\end{lemma}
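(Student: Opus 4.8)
The plan is to interpolate between the weighted average $\frac{1}{\Gamma_n}\sum_{k=0}^n \gamma_k f(\Xi_k(x))$ and the normalized integral $\frac{1}{\Xi_n(x)}\int_0^{\Xi_n(x)} f(t)\ud t$ through the intermediate quantity $\frac{1}{\Gamma_n}\sum_{k=0}^n \xi_k(x) f(\Xi_k(x))$, which is a right-endpoint Riemann sum for the integral along the partition $\Xi_{k_0-1}(x) < \Xi_{k_0}(x) < \cdots < \Xi_n(x)$. Write $\epsilon_n = \sup_{x \in K} |\xi_n(x)/\gamma_n - \psi(x)|$, so that $\epsilon_n \to 0$, each $\xi_n$ is bounded uniformly on $K$ by $\gamma_n(c + \epsilon_n)$, and $|\xi_n(x) - \gamma_n \psi(x)| \le \gamma_n \epsilon_n$; since $\psi \ge c^{-1}$ there is an index $k_0$, depending only on $K$, $c$ and $(\epsilon_n)$, with $0 < \xi_k(x) \le 2c\,\gamma_k$ for all $k \ge k_0$ and $x \in K$. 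Hence $(\Xi_n(x))_{n \ge k_0}$ is increasing, and summing the last inequality gives $|\Xi_n(x) - \psi(x)\Gamma_n| \le \sum_{j=0}^n \gamma_j \epsilon_j$. Since $\gamma_j > 0$, $\sum_j \gamma_j = \infty$, $\epsilon_j \to 0$, a Toeplitz/Ces\`aro averaging argument yields $\frac{1}{\Gamma_n}\sum_{j=0}^n \gamma_j \epsilon_j \to 0$, so $\sup_{x\in K}|\Xi_n(x)/\Gamma_n - \psi(x)| \to 0$; with $c^{-1}\le\psi\le c$ this shows $\Xi_n(x)\to\infty$ uniformly and that $\Xi_n(x)$ and $\Gamma_n$ are comparable, uniformly in $x$, for large $n$.

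First I would replace $\gamma_k$ by $\xi_k(x)/\psi(x)$ inside the sum: since $|\gamma_k - \xi_k(x)/\psi(x)| \le c\,\gamma_k \epsilon_k$ and $f$ is bounded, the same averaging argument gives
\[
	\sup_{x \in K}\biggl| \frac{1}{\Gamma_n}\sum_{k=0}^n \gamma_k f(\Xi_k(x)) - \frac{1}{\psi(x)}\cdot\frac{1}{\Gamma_n}\sum_{k=0}^n \xi_k(x) f(\Xi_k(x)) \biggr| \longrightarrow 0.
\]
Next comes the Riemann sum estimate, which is the crux of the argument. Setting $\Xi_{-1} \equiv 0$ one has $\xi_k = \Xi_k - \Xi_{k-1}$, and since $f'$ is bounded, $f$ is Lipschitz with constant $\|f'\|_\infty$, so for $k \ge k_0$ and $x\in K$,
\[
	\biggl| \int_{\Xi_{k-1}(x)}^{\Xi_k(x)} f(t)\ud t - \xi_k(x) f(\Xi_k(x)) \biggr| \le \|f'\|_\infty\, \xi_k(x)^2 .
\]
The obstacle is that the mesh $\max_{k_0\le k\le n}\xi_k(x)$ has no bound independent of $n$ other than tending to $0$; the point is that the \emph{total} quadratic error is small relative to $\Gamma_n$. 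Indeed, given $\eta>0$ pick $m \ge k_0$ with $\sup_{x\in K}\xi_k(x) < \eta$ for $k > m$ (possible as $\xi_k(x)\le 2c\gamma_k$ and $\gamma_k\to 0$); then $\sum_{k=k_0}^n \xi_k(x)^2 \le A_m + \eta\sum_{k=m+1}^n \xi_k(x) \le A_m + 2c\eta\,\Gamma_n$ with $A_m$ a fixed constant, whence $\frac{1}{\Gamma_n}\sum_{k=k_0}^n \xi_k(x)^2 \to 0$ uniformly. Summing the displayed inequality over $k_0\le k\le n$, and absorbing the finitely many terms with $k<k_0$ together with $\int_0^{\Xi_{k_0-1}(x)} f$ (all bounded uniformly on $K$, hence negligible after dividing by $\Gamma_n\to\infty$), gives
\[
	\sup_{x\in K}\biggl| \frac{1}{\Gamma_n}\sum_{k=0}^n \xi_k(x) f(\Xi_k(x)) - \frac{1}{\Gamma_n}\int_0^{\Xi_n(x)} f(t)\ud t \biggr| \longrightarrow 0.
\]

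Finally I would combine the pieces. The two displays give $\frac{1}{\Gamma_n}\sum_{k=0}^n \gamma_k f(\Xi_k(x)) = \frac{1}{\psi(x)\Gamma_n}\int_0^{\Xi_n(x)} f(t)\ud t + o(1)$ uniformly on $K$. Using $\psi(x)\Gamma_n = \Xi_n(x) + O\bigl(\sum_{j=0}^n \gamma_j \epsilon_j\bigr)$ together with the comparability of $\Xi_n(x)$ and $\Gamma_n$ — whence $\psi(x)\Gamma_n/\Xi_n(x)\to 1$ uniformly — and the trivial bound $\bigl|\Xi_n(x)^{-1}\int_0^{\Xi_n(x)} f\bigr| \le \|f\|_\infty$, one may replace $\psi(x)\Gamma_n$ by $\Xi_n(x)$ at the cost of an $o(1)$ error, obtaining
\[
	\sup_{x\in K}\biggl| \frac{1}{\Gamma_n}\sum_{k=0}^n \gamma_k f(\Xi_k(x)) - \frac{1}{\Xi_n(x)}\int_0^{\Xi_n(x)} f(t)\ud t \biggr| \longrightarrow 0 .
\]
Consequently, if the right-hand side converges as $n\to\infty$, so does the left-hand side, to the same limit, and the convergence is uniform on $K$ — which is the assertion. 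The main difficulty, as indicated, is the uniform control of the Riemann sum when the mesh is not uniformly summable; it is resolved precisely because $\sum_k\gamma_k$ diverges while $\gamma_k\to0$.
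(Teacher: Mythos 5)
Your proof is correct and follows essentially the same route as the paper's: replace the weights $\gamma_k$ by $\xi_k(x)/\psi(x)$, view the resulting sum as a Riemann sum with error controlled by $\Gamma_n^{-1}\sum_k \xi_k^2(x)\to 0$, and finally use $\Xi_n(x)/(\psi(x)\Gamma_n)\to 1$; the only difference is that you carry out the averaging steps by explicit $\epsilon$-arguments where the paper invokes the Stolz--Ces\`aro theorem, and you are somewhat more careful about the finitely many initial indices where $\xi_k$ need not be positive.
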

\begin{proof}
	Since $f$ and $1/\psi$ are bounded on $[0, \infty)$ and $K$ respectively, by the Stolz--Ces\'aro theorem we get 
	\begin{align*}
		\lim_{n \to \infty} 
		\frac{1}{\Gamma_n} 
		\sum_{k = 0}^n \bigg|\gamma_k - \frac{\xi_k(x)}{\psi(x)} \bigg| \cdot \big|f\big(\Xi_k(x)\big)\big|
		=
		\lim_{n \to \infty}
		\bigg|\psi(x) - \frac{\xi_n(x)}{\gamma_n}\bigg| \cdot \bigg|\frac{f\big(\Xi_n(x)\big)}{\psi(x)}\bigg|
		=0
	\end{align*}
	uniformly with respect to $x \in K$. Therefore,
	\begin{equation}
		\label{eq:4}
		\lim_{n \to \infty} \frac{1}{\Gamma_n} \sum_{k = 0}^n \gamma_k f\big(\Xi_k(x)\big)
		=
		\lim_{n \to \infty} \frac{1}{\Gamma_n} \sum_{k = 0}^n \frac{\xi_k(x)}{\psi(x)} f\big(\Xi_k(x)\big).
	\end{equation}
	We next observe that, by the mean value theorem, we obtain
	\begin{align*}
		\bigg| \sum_{k=0}^n \frac{\xi_k}{\psi} f(\Xi_k)  -
		\frac{1}{\psi} \int_{0}^{\Xi_n} f(t) \ud t \bigg|
		&\leq
		\sum_{k=0}^n \frac{1}{\psi} \int_{\Xi_{k-1}}^{\Xi_k} 
		\big| f ( \Xi_k) - f (t) \big| \ud t \\
		&\leq
		\sup_{t \in [0, \infty)}{| f'(t)|} \cdot \frac{1}{2} \sum_{k=0}^n \frac{\xi_k^2}{\psi}.
	\end{align*}
	Since by the Stolz--Ces\'aro theorem,
	\begin{align*}
		\lim_{n \to \infty} \frac{1}{\Gamma_n} \sum_{k=0}^n \frac{\xi_k^2(x)}{\psi(x)} 
		&=
		\lim_{n \to \infty} \frac{\xi_n^2(x)}{\gamma_n \psi(x)} \\
		&=
		\lim_{n \to \infty} \gamma_n \psi(x) \bigg(\frac{\xi_n(x)}{\gamma_n \psi(x)}\bigg)^2 =  0
	\end{align*}
	uniformly with respect to $x \in K$, we conclude that
	\[
		\lim_{n \to \infty} \frac{1}{\Gamma_n} \sum_{k=0}^n \frac{\xi_k}{\psi}
		f \big( \Xi_k \big) =
		\lim_{n \to \infty} \frac{1}{\psi \Gamma_n} \int_{0}^{\Xi_n} f(t) \ud t,
	\]
	which together with \eqref{eq:4} implies that
	\begin{equation}
		\label{eq:47}
		\lim_{n \to \infty} \frac{1}{\Gamma_n} \sum_{k=0}^n \gamma_k f \big( \Xi_k \big)
		=
		\lim_{n \to \infty} \frac{1}{\psi \Gamma_n} \int_{0}^{\Xi_n} f(t) \ud t
	\end{equation}
	uniformly on $K$. In view of the Stolz--Ces\'aro theorem, we obtain
	\begin{equation}
		\label{eq:48}
		\lim_{n \to \infty} \frac{\Xi_n(x)}{\psi(x) \Gamma_n} = 
		\lim_{n \to \infty} \frac{\xi_n(x)}{\psi(x) \gamma_n} = 1,
	\end{equation}
	which combined with \eqref{eq:47} completes the proof.
\end{proof}

\begin{lemma}
	\label{lem:3}
	Assume that $(\gamma_n : n \in \NN_0)$ is a sequence of positive numbers such that
	\[
		\sum_{n=0}^\infty \gamma_n = \infty, \qquad\text{and}\qquad
		\lim_{n \to \infty} \frac{\gamma_{n-1}}{\gamma_n} = 1.
	\]
	Let $K$ be compact subset of $\RR$. Suppose that $(\xi_n : n \in \NN_0)$ is a sequence of 
	real functions on $K$ such that
	\[
		\lim_{n \to \infty} \sup_{x \in K} \Big| \frac{\xi_n(x)}{\gamma_n} - \psi(x) \Big| = 0
	\]
	for some function $\psi: K \rightarrow [0, \infty)$ satisfying
	\[
		c^{-1} \leq \psi(x) \leq c, \qquad \text{for all } x \in K.
	\]
	We set
	\[
		\Xi_n(x) = \sum_{k = 0}^n \xi_k(x),\qquad\text{and}\qquad
		\Gamma_n = \sum_{k = 0}^n \gamma_k.
	\]
	If $\{f_s : s \in S\}$ is a bounded subset of $\calC^1 \big( [-\epsilon, \epsilon + \sup_{K} \psi] \big)$ for a certain $\epsilon > 0$ and some index set $S$, then
	\begin{equation}
		\label{eq:60}
		\lim_{n \to \infty} 
		\frac{1}{\Gamma_n}
		\sum_{k=0}^n 
		\gamma_k
		f_s \big( \Gamma_n^{-1} \Xi_k(x)\big) = 
		\frac{1}{\psi(x)} \int_0^{\psi(x)} f_s(t) \ud t
	\end{equation}
	uniformly with respect to $x \in K$ and $s \in S$.
\end{lemma}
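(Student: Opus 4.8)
The plan is to read the left‑hand side of \eqref{eq:60} as a Riemann sum for $\psi(x)^{-1}\int_0^{\psi(x)} f$. Set $t^{(n)}_k(x) = \Gamma_n^{-1}\Xi_k(x)$ for $0\le k\le n$ (with $t^{(n)}_{-1}(x):=0$), so that $t^{(n)}_k(x)-t^{(n)}_{k-1}(x)=\xi_k(x)/\Gamma_n$ and the sum in \eqref{eq:60} is $\Gamma_n^{-1}\sum_{k=0}^n\gamma_k f\big(t^{(n)}_k(x)\big)$. Since $\xi_k/\gamma_k\to\psi$ uniformly on $K$ and $\psi\ge c^{-1}>0$, the weight $\gamma_k/\Gamma_n$ is well approximated by $\big(t^{(n)}_k(x)-t^{(n)}_{k-1}(x)\big)/\psi(x)$: exactly as in the proof of Lemma~\ref{lem:1}, the Stolz--Ces\'aro theorem gives
\[
	\frac{1}{\Gamma_n}\sum_{k=0}^n\Bigl|\gamma_k-\frac{\xi_k(x)}{\psi(x)}\Bigr|
	\le c\,\frac{1}{\Gamma_n}\sum_{k=0}^n\Bigl|\psi(x)-\frac{\xi_k(x)}{\gamma_k}\Bigr|\gamma_k\longrightarrow 0
\]
uniformly on $K$. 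As $f$ is bounded on $[0,\sup_K\psi]$ --- and may be extended to a $\calC^1$ function on a slightly larger interval, harmlessly since $t^{(n)}_n(x)=\Xi_n(x)/\Gamma_n\to\psi(x)$ uniformly by Stolz--Ces\'aro --- it therefore suffices to study the telescoping sum $\psi(x)^{-1}\sum_{k=0}^n\big(t^{(n)}_k(x)-t^{(n)}_{k-1}(x)\big)f\big(t^{(n)}_k(x)\big)$.

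Next I treat that as a right-endpoint Riemann sum. Choose $k_0$ so large that $\xi_k(x)\ge\tfrac12 c^{-1}\gamma_k>0$ for all $x\in K$, $k\ge k_0$; then $k\mapsto t^{(n)}_k(x)$ is increasing on $\{k_0,\dots,n\}$, the terms with $k<k_0$ are easily seen to vanish uniformly in the limit, and $t^{(n)}_{k_0-1}(x)=\Xi_{k_0-1}(x)/\Gamma_n\to 0$ uniformly, so it is enough to compare $\sum_{k=k_0}^n\big(t^{(n)}_k-t^{(n)}_{k-1}\big)f\big(t^{(n)}_k\big)$ with $\int_{t^{(n)}_{k_0-1}(x)}^{t^{(n)}_n(x)}f(t)\,\ud t$. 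By the mean value theorem this difference is at most
\[
	\tfrac12\sup_{[0,\sup_K\psi]}|f'|\cdot\sum_{k=k_0}^n\bigl(t^{(n)}_k-t^{(n)}_{k-1}\bigr)^2
	\le
	\tfrac12\sup|f'|\cdot\Bigl(\max_{k_0\le k\le n}\frac{\xi_k(x)}{\Gamma_n}\Bigr)t^{(n)}_n(x),
\]
so, since $t^{(n)}_n(x)\to\psi(x)$ is bounded and $\xi_k(x)\lesssim\gamma_k$ for $k\ge k_0$, the whole argument reduces to the ``mesh'' estimate $\max_{k\le n}\gamma_k/\Gamma_n\to 0$.

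This is the one place where the hypothesis $\gamma_{n-1}/\gamma_n\to 1$ genuinely enters (in Lemma~\ref{lem:1} its role was played by $\gamma_n\to 0$), and I expect it to be the main point. First, $\gamma_n/\Gamma_n\to 0$: given $\varepsilon>0$ pick $N$ with $\gamma_{m-1}/\gamma_m>1-\varepsilon$ for all $m\ge N$; then $\gamma_j\ge(1-\varepsilon)^{k-j}\gamma_k$ for $N\le j\le k$, hence $\Gamma_k\ge\gamma_k\sum_{m=0}^{k-N}(1-\varepsilon)^m$, and letting $k\to\infty$ gives $\limsup_k\gamma_k/\Gamma_k\le\varepsilon$; since $\varepsilon$ is arbitrary, $\gamma_k/\Gamma_k\to 0$. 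Consequently $\max_{k\le n}\gamma_k/\Gamma_n\le\max_{k\le n}\gamma_k/\Gamma_k\to 0$ (split the maximum at $N(\varepsilon)$ and use $\Gamma_n\to\infty$ for the finitely many small indices), so the mesh tends to $0$ uniformly on $K$.

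Putting the three steps together, $\Gamma_n^{-1}\sum_{k=0}^n\gamma_k f\big(t^{(n)}_k(x)\big)=\psi(x)^{-1}\int_0^{t^{(n)}_n(x)}f(t)\,\ud t+o(1)$ uniformly on $K$; letting $n\to\infty$ and using once more $t^{(n)}_n(x)=\Xi_n(x)/\Gamma_n\to\psi(x)$ uniformly together with the continuity of $f$ yields \eqref{eq:60}. Apart from the mesh estimate, everything is a careful but routine adaptation of the bookkeeping in the proof of Lemma~\ref{lem:1}.
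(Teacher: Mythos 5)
Your proof is correct and follows essentially the same route as the paper's: replace the weights $\gamma_k$ by $\xi_k(x)/\psi(x)$ via Stolz--Ces\'aro, compare the resulting sum with $\int_0^{\Xi_n(x)/\Gamma_n} f(t)\,\ud t$ by the mean value theorem, and conclude using $\Xi_n/\Gamma_n\to\psi$ uniformly. The only divergence is in controlling the quadratic error term: the paper shows $\Gamma_n^{-2}\sum_{k\le n}\xi_k^2(x)\to0$ by two applications of Stolz--Ces\'aro (with the hypothesis $\gamma_{n-1}/\gamma_n\to1$ entering through $\gamma_n/\Gamma_n\to0$), whereas you bound that sum by mesh times total length and establish the mesh estimate $\max_{k\le n}\gamma_k/\Gamma_n\to0$ with a direct geometric-series argument --- both are valid and use the hypothesis in the same place.
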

\begin{proof}
	First, let us observe that by the Stolz--Ces\'aro theorem
	\begin{equation}
		\label{eq:61}
		\lim_{n \to \infty} \frac{\Xi_n(x)}{\Gamma_n} = 
		\lim_{n \to \infty} \frac{\xi_n(x)}{\gamma_n} = \psi(x)
	\end{equation}
	uniformly with respect to $x \in K$.

	Let $U = (-\epsilon, \epsilon + \sup_{K} \psi)$. Notice that $\{ f_s : s \in S \}$ is a bounded
	subset of $\calC^1(\overline{U})$. In view of \eqref{eq:61}, there is $M$ such that for every $n \geq M$, $x \in K$,
	and all $k \in \{M, 1, \ldots, n\}$,
	\[
		\frac{\Xi_k(x)}{\Gamma_n} \in U.
	\]
	Let $n \geq M$. Notice that
	\[
		\sup_{x \in K} 
		\bigg| 
		\sum_{k=0}^n \gamma_k f_s \big(\Gamma_n^{-1} \Xi_k(x) \big) 
		-
		\frac{1}{\psi(x)} \sum_{k=0}^n \xi_k(x) f_s \big( \Gamma_n^{-1} \Xi_k(x) \big) 
		\bigg|
		\leq
		c
		\sup_{t \in \overline{U}} |f_s(t)| \cdot
		\sum_{k=0}^n \gamma_k \sup_{x \in K} \bigg| \psi(x) - \frac{\xi_k(x)}{\gamma_k} \bigg|.
	\]
	Since, by the Stolz--Ces\'aro theorem
	\[
		\lim_{n \to \infty} \frac{1}{\Gamma_n}
		\sum_{k=0}^n \gamma_k \sup_{x \in K} {\bigg| \psi(x)  - \frac{\xi_k(x)}{\gamma_k} \bigg|}
		=
		\lim_{n \to \infty} 
		\sup_{x \in K}{\bigg| \psi(x) - \frac{\xi_n(x)}{\gamma_n}\bigg|} = 0,
	\]
	we obtain
	\begin{equation}
		\label{eq:62}
		\lim_{n \to \infty} \frac{1}{\Gamma_n} \sum_{k=0}^n \gamma_k
		f_s \big( \Gamma_n^{-1} \Xi_k(x)\big)=
		\lim_{n \to \infty}
		\frac{1}{\Gamma_n}
		\sum_{k=0}^n \xi_k(x)
		f_s \big( \Gamma_n^{-1} \Xi_k(x)\big)
	\end{equation}
	uniformly with respect to $x \in K$ and $s \in S$. Next, we are going to replace the sum by 
	the integral. By the mean value theorem, we can write
	\begin{align*}
		\bigg| \frac{1}{\Gamma_n} \sum_{k=0}^n \xi_k(x) f_s \big(\Gamma_n^{-1} \Xi_k(x)\big)
		-
		\int_{0}^{\Xi_n(x)/\Gamma_n}  f_s(t) \ud t \bigg|
		&=
		\bigg|
		\sum_{k=0}^n \int_{\Xi_{k-1}(x)/\Gamma_n}^{\Xi_k(x)/\Gamma_n} 
		f_s \big( \Gamma_n^{-1} \Xi_k(x) \big) - f_s (t) \ud t \bigg|\\
		&\leq 
		\frac{1}{2}
		\sup_{t \in \overline{U}}{| f_s'(t)|}
		\frac{1}{2 \Gamma_n^2}
		\sum_{k=0}^n \xi_k^2(x).
	\end{align*}
	By repeated application of the Stolz--Ces\'aro theorem, we get
	\[
		\lim_{n \to \infty} \frac{\sum_{k=0}^n \xi_k^2(x)}{\Gamma_n^2} = 
		\lim_{n \to \infty} \frac{\xi_n^2(x)}{\gamma_n (\Gamma_n + \Gamma_{n-1})} 
		\leq \psi^2(x) \lim_{n \to \infty} \frac{\gamma_n}{\Gamma_n} =
		\psi^2(x) \lim_{n \to \infty} \frac{\gamma_{n} - \gamma_{n-1}}{\gamma_n} = 0
	\]
	uniformly with respect to $x \in K$. Therefore,
	\[
		\lim_{n \to \infty} \frac{1}{\Gamma_n} \sum_{k=0}^n \xi_k(x) 
		f_s \big( \Gamma_n^{-1} \Xi_k(x)\big) = 
		\lim_{n \to \infty} \int_{0}^{\Xi_n(x)/\Gamma_n} f_s(t) \ud t
	\]
	uniformly with respect to $x \in K$ and $s \in S$, which together with \eqref{eq:62} implies that
	\[
		\lim_{n \to \infty} 
		\frac{1}{\Gamma_n} \sum_{k=0}^n \gamma_k f_s \big( \Gamma_n^{-1} \Xi_k(x)\big)
		=
		\frac{1}{\psi(x)} 
		\lim_{n \to \infty} \int_{0}^{\Xi_n(x)/\Gamma_n} f_s(t) \ud t
	\]
	uniformly with respect to $x \in K$ and $s \in S$. Now, by \eqref{eq:61} we conclude \eqref{eq:60}.
\end{proof}

\begin{proposition} 
	\label{prop:3}
	Let $N$ be a positive integer, $\sigma \in \{-1, 1\}$ and $i \in \{0, 1, \ldots, N-1 \}$. Suppose that
	\begin{equation} 
		\label{eq:70}
		\lim_{j \to \infty} a_{jN+i-1} = \infty.
	\end{equation}
	Let $K$ be a compact interval with non-empty interior contained in
	\[
		\Lambda = \left\{ x \in \RR : \lim_{j \to \infty} \discr R_{jN+i}(x) \ \text{exists and is negative} \right\}
	\]
	where
	\[
	        R_n = a_{n+N-1} ( X_n - \sigma \Id ).
	\]
	Assume that for each $x \in K$, the sequence $(R_{jN+i}(x) : j \in \NN_0)$ converges to $\calR_i(x)$.
	Then
	\[
		\lim_{j \to \infty} a_{(j+1)N+i-1} \cdot \arccos\bigg(\sigma \frac{\tr X_{jN+i}(x)}{2 \sqrt{\det X_{jN+i}(x)}}\bigg)
		=\frac{1}{2} \sqrt{-\discr \calR_i(x)}
	\]
	uniformly with respect to $x \in K$.
\end{proposition}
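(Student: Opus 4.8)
The plan is to resolve the $0\cdot\infty$ indeterminacy: as $j\to\infty$ the argument of $\arccos$ tends to $1$ while $a_{(j+1)N+i-1}\to\infty$, and the product has a finite limit governed by the rate at which $\discr X_{jN+i}$ degenerates. Write $n=jN+i$, so that $a_{n+N-1}=a_{(j+1)N+i-1}$ and $R_n=a_{n+N-1}\bigl(X_n-\sigma\Id\bigr)$; note that $a_{(j+1)N+i-1}\to\infty$ since $(j+1)N+i-1$ runs over the same residue class modulo $N$ as $jN+i-1$, to which \eqref{eq:70} applies. Since $\det B_k(x)=a_{k-1}/a_k$, we have $\det X_n(x)=a_{n-1}/a_{n+N-1}$ independently of $x$; writing $\det X_n=\det\bigl(\sigma\Id+a_{n+N-1}^{-1}R_n(x)\bigr)$ for a fixed $x\in K$ and using the boundedness of the convergent sequence $(R_{jN+i}(x))$ together with $a_{n+N-1}\to\infty$, we get $\det X_n\to\sigma^2=1$. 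Finally, since $\discr$ is homogeneous of degree $2$ in its matrix argument and invariant under scalar translation ($\discr(Y-\sigma\Id)=\discr Y$ for every $2\times2$ matrix $Y$), we obtain the identity $\discr R_n=a_{n+N-1}^2\,\discr X_n$.

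Next I would set $z_n(x)=\sigma\,\tr X_n(x)\big/\bigl(2\sqrt{\det X_n(x)}\bigr)$ and compute, using $\sigma^2=1$ and $\det X_n>0$,
\[
  1-z_n(x)^2=\frac{-\discr X_n(x)}{4\det X_n(x)}=\frac{-\discr R_{jN+i}(x)}{4\,a_{(j+1)N+i-1}^2\,\det X_{jN+i}(x)},
\]
hence
\[
  a_{(j+1)N+i-1}^2\bigl(1-z_n(x)^2\bigr)=\frac{-\discr R_{jN+i}(x)}{4\,\det X_{jN+i}(x)}.
\]
Because $\discr R_{jN+i}(\cdot)$ is a polynomial of degree at most $2N$ converging pointwise on the interval $K$, the convergence $\discr R_{jN+i}\to\discr\calR_i$ is uniform on $K$; combined with $\det X_{jN+i}\to1$ this gives
\[
  \lim_{j\to\infty}\ \sup_{x\in K}\ \Bigl|\,a_{(j+1)N+i-1}^2\bigl(1-z_n(x)^2\bigr)+\tfrac14\discr\calR_i(x)\,\Bigr|=0,
\]
and the limit $-\tfrac14\discr\calR_i(x)$ is bounded below by a positive constant on $K$ since $K\subset\Lambda$. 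In particular $0<1-z_n\le1-z_n^2\to0$ uniformly (for $j$ large $z_n>0$, as $\sigma\,\tr X_n\to2$), so $z_n\to1$ uniformly and $\theta_n(x):=\arccos z_n(x)\to0$ uniformly on $K$; moreover for $j$ large $\discr X_n<0$ forces $z_n^2<1$, so $\arccos$ is legitimately applied.

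To finish, since $\theta_n\in[0,\pi]$ one has $\sin\theta_n=\sqrt{1-z_n^2}\ge0$, whence
\[
  a_{(j+1)N+i-1}\,\sin\theta_n(x)=\sqrt{\,a_{(j+1)N+i-1}^2\bigl(1-z_n(x)^2\bigr)\,}\ \longrightarrow\ \tfrac12\sqrt{-\discr\calR_i(x)}
\]
uniformly on $K$, the expression under the square root converging uniformly to a function bounded away from $0$. Since $\sup_{x\in K}\theta_n(x)\to0$, we have $\theta_n/\sin\theta_n\to1$ uniformly, and therefore
\[
  a_{(j+1)N+i-1}\,\theta_n(x)=\bigl(a_{(j+1)N+i-1}\sin\theta_n(x)\bigr)\cdot\frac{\theta_n(x)}{\sin\theta_n(x)}\ \longrightarrow\ \tfrac12\sqrt{-\discr\calR_i(x)}
\]
uniformly on $K$, which is the asserted identity. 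I do not expect a genuine obstacle here: the only delicate points are the bookkeeping of which coefficient subsequences tend to infinity (in particular recognising $\det X_{jN+i}=a_{jN+i-1}/a_{(j+1)N+i-1}\to1$, independently of $x$) and the passage to uniform convergence of $\discr R_{jN+i}\to\discr\calR_i$, which is immediate from the bounded degree of these polynomials on an interval with nonempty interior.
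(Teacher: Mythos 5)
Your proof is correct and follows essentially the same route as the paper: both reduce the $0\cdot\infty$ indeterminacy to the identity $1-z_n^2=-\discr X_n/(4\det X_n)=-\discr R_n/(4a_{n+N-1}^2\det X_n)$, use the bounded-degree-polynomial argument to upgrade pointwise to uniform convergence of $\discr R_{jN+i}$, note $\det X_{jN+i}\to 1$, and conclude via $\arccos(t)/\sqrt{1-t^2}\to 1$ as $t\to 1^-$ (which is your $\theta_n/\sin\theta_n\to 1$ in disguise). No gaps.
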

\begin{proof}
	Let $K$ be a compact subset of $\Lambda$. Since each entry in $R_n(x)$ is a polynomial of degree at most $N$,
	\[
		\lim_{j \to \infty} R_{jN+i}(x) = \calR_i(x)
	\]
	uniformly with respect to $x \in K$. Hence, by \eqref{eq:70},
	\[
		\lim_{j \to \infty} X_{jN+i}(x) = \sigma \Id
	\]
	uniformly with respect to $x \in K$. In particular,
	\[
		\lim_{j \to \infty} \frac{\tr X_{jN+i}(x)}{2 \sqrt{\det X_{jN+i}(x)}}
		=
		\sigma.
	\]
	Since
	\[
		\lim_{t \to 1^-} \frac{\arccos t}{\sqrt{1 - t^2}} = 1,
	\]
	we obtain
	\[
		\lim_{j \to \infty}
		\bigg(
		1 - \bigg(\frac{\tr X_{jN+i}(x)}{2 \sqrt{\det X_{jN+i}(x)}} \bigg)^2\bigg)^{-1/2}
		\arccos\bigg(\sigma \frac{\tr X_{jN+i}(x)}{2 \sqrt{\det X_{jN+i}(x)}}\bigg) = 1.
	\]
	Finally, let us observe that
	\[
		\sqrt{1 - \bigg(\frac{\tr X_n(x)}{2 \sqrt{\det X_n(x)}} \bigg)^2} =
		\frac{\sqrt{-\discr X_{n}(x)}}{2 \sqrt{\det X_{n}(x)}} =
		\frac{1}{a_{n+N-1}} \frac{\sqrt{-\discr R_n(x)}}{2 \sqrt{\det X_n(x)}}.
	\]
	Hence,
	\[
		\lim_{j \to \infty} a_{(j+1)N+i-1} \sqrt{1 - \bigg(\frac{\tr X_{jN+i}(x)}{2 \sqrt{\det X_{jN+i}(x)}} \bigg)^2}
		=
		\lim_{j \to \infty} \frac{\sqrt{-\discr R_{jN+i}(x)}}{2 \sqrt{\det X_{jN+i}(x)}}
		=
		\frac{1}{2} \sqrt{-\discr \calR_i(x)},
	\]
	which concludes the proof.
\end{proof}

For $i \in \{0, 1, \ldots, N-1\}$ and $n \in \NN$ we set
\[
	K_{i; n}(x, y) = \sum_{j = 0}^n p_{jN+i}(x) p_{jN+i}(y), 
	\qquad x, y \in \RR,
\]
and
\[
	\rho_{i; n} = \sum_{j = 0}^n \frac{1}{a_{jN+i}}.
\]
We can now state one of the main results of this article.
\begin{theorem} 
	\label{thm:6}
	Let $N$ be a positive integer, $\sigma \in \{-1, 1\}$, and $i \in \{0, 1, \ldots, N-1 \}$. Suppose that
	\begin{equation} 
		\label{thm:6:eq:1}
		\lim_{n \to \infty} a_{nN+i-1} = \infty, \qquad \text{and} \qquad
		\lim_{n \to \infty} (a_{(n+1)N+i-1} - a_{nN+i-1}) = 0.
	\end{equation}
	Let $K$ be a compact interval with non-empty interior contained in
	\[
		\Lambda = 
		\left\{ x \in \RR : \lim_{n \to \infty} \discr R_{nN+i}(x) \ \text{exists and is negative} \right\}
	\]
	where
	\[
		R_n = a_{n+N-1} ( X_n - \sigma \Id ).
	\]
	If
	\[
		(X_{jN+i} : j \in \NN), (R_{jN+i} : j \in \NN) \in \calD_1 \big( K; \GL(2, \RR) \big)
	\]
	then
	\[
		\lim_{n \to \infty} \frac{1}{\rho_{i-1; n}} K_{i; n}(x, x) = 
		\frac{1}{\pi \mu'(x)} 
		\frac{|[\calR_i(x)]_{2, 1}|}{\sqrt{-\discr \calR_i(x)}}
	\]
	uniformly with respect to $x \in K$, where $\calR_i$ is the limit of $(R_{jN+i}: j \in \NN)$.
\end{theorem}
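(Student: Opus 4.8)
The plan is to express $K_{i;n}(x,x) = \sum_{j=0}^n p_{jN+i}^2(x)$ using the asymptotic formula for the polynomials from Theorem~\ref{thm:5}, and then to evaluate the resulting oscillatory sum by means of Lemma~\ref{lem:1}. First I would note that the hypotheses here are exactly those of Theorem~\ref{thm:5}, so there are $M>0$ and a continuous function $\eta$ with
\[
	\sqrt{a_{(j+1)N+i-1}}\, p_{jN+i}(x) = A_i(x)\sin\Big(\textstyle\sum_{l=M+1}^j \theta_l(x) + \eta(x)\Big) + E_{jN+i}(x),
\]
where $A_i(x) = \sqrt{2|[\calR_i(x)]_{2,1}|/(\pi\mu'(x)\sqrt{-\discr\calR_i(x)})}$ and $\sup_K|E_{jN+i}| \to 0$ because the $\calD_1$ hypothesis makes the tail sums $\sum_{j\ge k}(\sup_K\|X_{(j+1)N+i}-X_{jN+i}\| + \sup_K\|R_{(j+1)N+i}-R_{jN+i}\|)$ converge to $0$. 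Squaring and using $\sin^2 = \tfrac12(1-\cos(2\cdot))$, I would write
\[
	p_{jN+i}^2(x) = \frac{A_i^2(x)}{a_{(j+1)N+i-1}}\Big(\tfrac12 - \tfrac12\cos\big(2\Theta_j(x) + 2\eta(x)\big)\Big) + (\text{cross terms and } E^2),
\]
with $\Theta_j(x) = \sum_{l=M+1}^j\theta_l(x)$; the cross terms $2 A_i(x)\sin(\cdots)E_{jN+i}/\sqrt{a_{(j+1)N+i-1}}$ and the $E^2$ terms contribute, after division by $\rho_{i-1;n}$, a vanishing amount since $E_{jN+i}\to 0$ uniformly and $\sum_j 1/a_{(j+1)N+i-1}$ is comparable to $\rho_{i-1;n}$ (this comparability follows from $a_{(j+1)N+i-1}-a_{jN+i-1}\to 0$ together with $a_{jN+i-1}\to\infty$, exactly as in the proof of Corollary~\ref{cor:1}).

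Next I would set up the application of Lemma~\ref{lem:1} with $\gamma_j = 1/a_{jN+i}$ and $\xi_j(x) = \theta_j(x)$ (suitably reindexed so the $M$-shift is harmless). Here $\sum_j \gamma_j = \infty$ by the first part of \eqref{thm:6:eq:1} and $\gamma_j\to 0$; the key input is Proposition~\ref{prop:3}, which gives
\[
	\lim_{j\to\infty} a_{(j+1)N+i-1}\,\theta_j(x) = \tfrac12\sqrt{-\discr\calR_i(x)}
\]
uniformly on $K$, so that $\xi_j(x)/\gamma_j \to \psi(x) := \tfrac12\sqrt{-\discr\calR_i(x)}$ (up to the asymptotically negligible discrepancy between $a_{(j+1)N+i-1}$ and $a_{jN+i}$, which one absorbs into the error since their ratio tends to $1$). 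The function $\psi$ is continuous and bounded above and below on $K$ because $\discr\calR_i$ is a polynomial that is strictly negative on the compact set $K\subset\Lambda$. Applying Lemma~\ref{lem:1} with $f(t) = \tfrac12$ gives $\frac{1}{\Gamma_n}\sum_{k}\gamma_k\cdot\tfrac12 \to \tfrac12$, and applying it with $f(t) = \cos(2t + 2\eta(x))$ (which together with $f'$ is bounded on $[0,\infty)$) shows that the oscillatory part tends to $\lim_n \Xi_n(x)^{-1}\int_0^{\Xi_n(x)}\cos(2t+2\eta(x))\,dt$; since $\Xi_n(x) = \sum_{k}\theta_k(x)$ and $\theta_k(x)/(1/a_{kN+i}) \to \psi(x) > 0$ with $\sum 1/a_{kN+i} = \infty$, we have $\Xi_n(x)\to\infty$ uniformly, so this Cesàro-type average of a bounded oscillation converges to $0$ uniformly on $K$. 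Hence
\[
	\frac{1}{\rho_{i-1;n}}\sum_{j=0}^n \frac{A_i^2(x)}{a_{(j+1)N+i-1}}\Big(\tfrac12 - \tfrac12\cos(\cdots)\Big) \longrightarrow \tfrac12 A_i^2(x),
\]
uniformly on $K$, using once more that $\sum_j 1/a_{(j+1)N+i-1}$ and $\rho_{i-1;n} = \sum_j 1/a_{jN+i}$ have ratio tending to $1$.

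Finally, collecting terms, the limit equals $\tfrac12 A_i^2(x) = \tfrac12\cdot\frac{2|[\calR_i(x)]_{2,1}|}{\pi\mu'(x)\sqrt{-\discr\calR_i(x)}} = \frac{1}{\pi\mu'(x)}\frac{|[\calR_i(x)]_{2,1}|}{\sqrt{-\discr\calR_i(x)}}$, which is precisely the claimed formula, and the convergence is uniform on $K$ since every estimate above was uniform. The main obstacle is the bookkeeping that controls the discrepancy between the three ``denominator'' sequences $a_{jN+i}$, $a_{(j+1)N+i-1}$, and $a_{jN+i-1}$: one must verify that replacing one by another in the weights $\gamma_j$ and in $\rho_{i-1;n}$ produces only errors that vanish after division by $\rho_{i-1;n}$; this rests entirely on \eqref{thm:6:eq:1} and is the step that requires care, although it is essentially the same argument already used in Corollary~\ref{cor:1}. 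A secondary technical point is making sure that the error term $E_{jN+i}$ from Theorem~\ref{thm:5} enters the squared quantity $p_{jN+i}^2$ only through cross terms bounded by $C\,\sup_K|E_{jN+i}|/\sqrt{a_{(j+1)N+i-1}}$ plus $E_{jN+i}^2/a_{(j+1)N+i-1}$, both of which sum to $o(\rho_{i-1;n})$ uniformly on $K$.
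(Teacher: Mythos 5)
Your overall strategy is the one the paper uses: feed the asymptotic formula of Theorem~\ref{thm:5} into $K_{i;n}(x,x)$, discard the error terms by Stolz--Ces\'aro, and evaluate the resulting oscillatory sum with Lemma~\ref{lem:1} and Proposition~\ref{prop:3} (the paper applies Lemma~\ref{lem:1} directly to $f=\sin^2$ rather than splitting off $\cos(2\,\cdot)$, but that is cosmetic). There is, however, one genuine gap: you misquote Proposition~\ref{prop:3}. It asserts
\[
\lim_{j\to\infty} a_{(j+1)N+i-1}\,\arccos\Big(\sigma\,\tfrac{\tr X_{jN+i}(x)}{2\sqrt{\det X_{jN+i}(x)}}\Big)=\tfrac12\sqrt{-\discr\calR_i(x)},
\]
with $\sigma$ \emph{inside} the arccosine, whereas $\theta_j(x)=\arccos\big(\tfrac{\tr X_{jN+i}(x)}{2\sqrt{\det X_{jN+i}(x)}}\big)$. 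For $\sigma=1$ these coincide and your argument is fine. For $\sigma=-1$ one has $X_{jN+i}\to-\Id$, hence $\theta_j(x)\to\pi$ and $a_{(j+1)N+i-1}\theta_j(x)\to\infty$; your choice $\xi_j=\theta_j$ then violates the hypothesis $\xi_j/\gamma_j\to\psi$ of Lemma~\ref{lem:1} (the ratio diverges), so the lemma cannot be applied as you set it up. The missing step is the reflection used in the paper: since $\sin^2(k\pi+t)=\sin^2 t$ (equivalently, $\cos(2\,\cdot)$ is $2\pi$-periodic), one may replace $\theta_j$ by $\pi-\theta_j$ in the phase when $\sigma=-1$, and it is $a_{(j+1)N+i-1}(\pi-\theta_j)$ that converges to $\tfrac12\sqrt{-\discr\calR_i}$. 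With $\xi_j=\theta_j$ for $\sigma=1$ and $\xi_j=\pi-\theta_j$ for $\sigma=-1$ your argument goes through verbatim.

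A secondary point on the bookkeeping you flag at the end: the ratio $a_{jN+i}/a_{jN+i-1}$ is \emph{not} controlled by \eqref{thm:6:eq:1}, which only concerns differences along the residue class $i-1\bmod N$; so you cannot justify interchanging $a_{jN+i}$ with $a_{jN+i-1}$ "entirely from \eqref{thm:6:eq:1}". The resolution is that $a_{jN+i}$ should never enter: the weights produced by Theorem~\ref{thm:5} are $1/a_{(k+1)N+i-1}$ and the normalizer is $\rho_{i-1;n}=\sum_j 1/a_{jN+i-1}$, and these two \emph{are} asymptotically interchangeable because $\det X_{jN+i}=a_{jN+i-1}/a_{(j+1)N+i-1}\to 1$ and the individual terms tend to $0$ while the sums diverge. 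Take $\gamma_j=1/a_{(j+1)N+i-1}$ (or $1/a_{jN+i-1}$) in Lemma~\ref{lem:1} and the issue disappears.
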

\begin{proof}
	Let $K \subset \Lambda$ be a compact interval with non-empty interior. By Theorem \ref{thm:5}, there is $M > 0$
	such that for all $k \geq M$,
	\[
		a_{(k+1)N+i-1} p_{kN+i}^2(x) 
		= 
		\frac{2 |[\calR_i(x)]_{2,1}|}{\pi \mu'(x) \sqrt{-\discr \calR_i(x)}}
		\sin^2\Big(\eta(x) + \sum_{j = M+1}^k \theta_j(x) \Big) + E_{kN+i}(x)
	\]
	where
	\begin{equation}
		\label{eq:46}
		\lim_{k \to \infty} \sup_{x \in K}{|E_{kN+i}(x)|} = 0. 
	\end{equation}
	Therefore,
	\begin{align*}
		\sum_{k = M}^n p_{kN+i}^2(x) 
		&= \frac{2 |[\calR_i(x)]_{2,1}|}{\pi \mu'(x) \sqrt{-\discr \calR_i(x)}}
		\sum_{k = M}^n \frac{1}{a_{(k+1)N+i-1}}
		\sin^2\Big(\eta(x) + \sum_{j = M+1}^k \theta_{j}(x)\Big)\\
		&\phantom{=}+
		\sum_{k=M}^n \frac{1}{a_{(k+1)N+i-1}} E_{kN+i}(x).
	\end{align*}
	Observe that there is $c > 0$ such that
	\[
		\sup_{x \in K} \sum_{k = 0}^{M-1} p_{kN+i}^2(x) \leq c.
	\]
	By \eqref{thm:6:eq:1},
	\[
		a_{jN+i-1} 
		= a_{i-1} + \sum_{k = 1}^j ( a_{kN+i-1} - a_{(k-1)N+i-1} ) \\
		\leq c(j+1),
	\]
	thus
	\begin{equation}
		\label{eq:36}
		\lim_{n \to \infty} \rho_{i-1;n} = \infty.
	\end{equation}
	Next, by the Stolz--Ces\'aro theorem and \eqref{eq:46}, we obtain
	\[
		\lim_{n \to \infty} \frac{1}{\rho_{i-1; n}} \sum_{k = M}^n \frac{1}{a_{(k+1)N+i-1}} E_{kN+i}(x)
		=
		\lim_{n \to \infty} \frac{a_{nN+i-1}}{a_{(n+1)N+i-1}} E_{nN+i}(x) = 0,
	\]
	since \eqref{thm:6:eq:1} entails that
	\[
		\lim_{n \to \infty} \frac{a_{nN+i-1}}{a_{(n+1)N+i-1}} = 1.
	\]
	Therefore, in view of \eqref{eq:36}
	\begin{equation}
		\label{eq:49}
		\begin{aligned}
		&
		\lim_{n \to \infty} \frac{1}{\rho_{i-1; n}} K_{i; n}(x, x) \\
		&\qquad\qquad=
		\frac{2 |[\calR_i(x)]_{2,1}|}{\pi \mu'(x) \sqrt{-\discr \calR_i(x)}}
		\cdot \lim_{n \to \infty}
		\frac{1}{\rho_{i-1; n}}
		\sum_{k = 0}^n
		\frac{1}{a_{(k+1)N+i-1}} \sin^2\Big(\eta(x) + \sum_{j = M+1}^k \theta_j(x)\Big).
		\end{aligned}
	\end{equation}
	Since $\sin^2(k \pi + x) = \sin^2(x)$, we have
	\begin{equation}
		\label{eq:45}
		\sin^2\Big(\eta(x) + \sum_{j = M+1}^k \theta_j(x)\Big)
		=
		\sin^2\Big(-\eta(x) + \sum_{j = M+1}^k \big(\pi - \theta_j(x)\big)\Big).
	\end{equation}
	Therefore, by taking
	\[
		\gamma_j = \frac{1}{a_{jN+i-1}}, \qquad
		\psi(x) = \frac{1}{2} \sqrt{-\discr \calR_i(x)},
	\]
	and 
	\[
		\xi_0(x) = 
		\sigma \eta(x) + 
		\begin{cases}
			\theta_0(x) & \text{if } \sigma = 1, \\
			\pi - \theta_0(x) & \text{if } \sigma = -1,
		\end{cases}
		\quad\text{and}\quad
		\xi_j(x) = 
		\begin{cases}
			\theta_j(x) & \text{if } \sigma = 1, \\
			\pi - \theta_j(x) & \text{if } \sigma = -1,
		\end{cases}
	\]
	by Proposition \ref{prop:3} we obtain
	\[
		\lim_{j \to \infty} \frac{\xi_j(x)}{\gamma_j} = \psi(x). 
	\]
	Hence, in view of Lemma \ref{lem:1} and \eqref{eq:45} we get
	\begin{align*}
		\lim_{j \to \infty} \frac{1}{\rho_{i-1; n}} \sum_{k = 0}^n \frac{1}{a_{(k+1)N+i-1}} 
		\sin^2\Big(\sum_{j = 0}^k \xi_j(x)\Big)
		&=
		\lim_{n \to \infty} \frac{1}{\Xi_n(x)} \int_0^{\Xi_n(x) } \sin^2(t) \ud t \\
		&=
		\lim_{n \to \infty} \frac{1}{2} - \frac{1}{4 \Xi_n(x)} \sin\big(2 \Xi_n(x)\big).
	\end{align*}
	Lastly, by \eqref{eq:48},
	\[
		\lim_{n \to \infty} \frac{1}{\Xi_n(x)} = 0
	\]
	uniformly with respect to $x \in K$, thus
	\[
		\lim_{j \to \infty} \frac{1}{\rho_{i-1; n}} \sum_{k = 0}^n \frac{1}{a_{(k+1)N+i-1}}
		\sin^2\Big(\sum_{j = 0}^k \xi_j(x)\Big) 
		=
		\frac{1}{2},
	\]
	which together with \eqref{eq:49} finishes the proof.
\end{proof}

\section{Periodic modulations}
\label{sec:6}
\subsection{Definitions and basic properties}
We say that Jacobi parameters $(a_n)$ and $(b_n)$ are \emph{$N$-periodically modulated}
if there are two $N$-periodic sequences
$(\alpha_n : n \in \ZZ)$ and $(\beta_n : n \in \ZZ)$ of positive and real numbers, respectively, 
such that
\begin{enumerate}[(a)]
	\item
	$\begin{aligned}[b]
	\lim_{n \to \infty} a_n = \infty
	\end{aligned},$
	\item
	$\begin{aligned}[b]
	\lim_{n \to \infty} \bigg| \frac{a_{n-1}}{a_n} - \frac{\alpha_{n-1}}{\alpha_n} \bigg| = 0
	\end{aligned},$
	\item
	$\begin{aligned}[b]
	\lim_{n \to \infty} \bigg| \frac{b_n}{a_n} - \frac{\beta_n}{\alpha_n} \bigg| = 0
	\end{aligned}.$
\end{enumerate}

By $(\mathfrak{p}_n : n \in \NN_0)$ we denote orthogonal polynomials associated with sequences $(\alpha_n : n \in \NN_0)$
and $(\beta_n : n \in \NN_0)$. We set 
\[
	\frakX_n(x) = 
	\prod_{j=n}^{n+N-1} \frakB_j(x), \quad \text{where} \quad
	\frakB_j(x) = 
	\begin{pmatrix}
		0 & 1 \\
		-\frac{\alpha_{j-1}}{\alpha_j} & \frac{x - \beta_j}{\alpha_j}
	\end{pmatrix}.
\]
In this article we are interested in the case when 
\[
	\frakX_0(0) = \sigma \Id
\]
for some $\sigma \in \{ -1, 1 \}$ and for any $i \in \{0, 1, \ldots N-1 \}$, the limit
\begin{equation} 
	\label{eq:31a}
	\calR_i(x) = \lim_{n \to \infty} a_{(n+1)N+i-1} \big( X_{nN+i}(x) - \sigma \Id \big)
\end{equation}
exists.

\begin{proposition}
	\label{prop:5}
	If for some $i$ the limit \eqref{eq:31a} exists, then it exists for all $i \in \NN_0$. Moreover, 
	\begin{equation}
		\label{eq:53}
		\calR_{i+1}(x) = \frac{\alpha_i}{\alpha_{i-1}} \frakB_i(0) \calR_i(x) \frakB^{-1}_i(0).
	\end{equation}
\end{proposition}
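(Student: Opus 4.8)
The plan is to prove \eqref{eq:53} directly and to deduce the first assertion by induction on the residue class, the step from $i$ to $i+1$ being extracted from the cocycle identity for the transfer matrices. Two elementary facts come first. From $X_n = \prod_{j=n}^{n+N-1} B_j$ one reads off
\[
	X_{n+1}(x) = B_{n+N}(x)\,X_n(x)\,B_n(x)^{-1} ,
\]
and the same identity applied to $\frakX_n$, together with the $N$-periodicity of $\frakB$ and the hypothesis $\frakX_0(0) = \sigma\Id$, gives by induction that $\frakX_n(0) = \sigma\Id$ for every $n$. Secondly, since $(a_n)$ and $(b_n)$ are $N$-periodically modulated and $a_n\to\infty$, one has $B_{\ell N+i}(x)\to\frakB_i(0)$ as $\ell\to\infty$, uniformly on $K$, and by (b) together with the $N$-periodicity of $\alpha$ also $a_{\ell N+i}/a_{\ell N+i-1}\to\alpha_i/\alpha_{i-1}$.

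Now fix $i$ such that $\calR_i=\lim_\ell R_{\ell N+i}$ exists (this limit is automatically uniform on compact subsets of the set where it is attained, since the $R_{\ell N+i}$ are matrix polynomials of degree at most $N$). Substituting $X_{\ell N+i} = \sigma\Id + a_{(\ell+1)N+i-1}^{-1}R_{\ell N+i}$ into the cocycle identity with $n=\ell N+i$ and using $R_{\ell N+i+1} = a_{(\ell+1)N+i}\bigl(X_{\ell N+i+1}-\sigma\Id\bigr)$, one obtains the exact identity
\[
	R_{\ell N+i+1}
	=
	\frac{a_{(\ell+1)N+i}}{a_{(\ell+1)N+i-1}}\,B_{(\ell+1)N+i}\,R_{\ell N+i}\,B_{\ell N+i}^{-1}
	\;+\;
	\sigma\,a_{(\ell+1)N+i}\bigl(B_{(\ell+1)N+i}B_{\ell N+i}^{-1}-\Id\bigr).
\]
By the facts recorded above, the first summand converges, uniformly on $K$, to $\tfrac{\alpha_i}{\alpha_{i-1}}\frakB_i(0)\,\calR_i(x)\,\frakB_i(0)^{-1}$, which is precisely the right-hand side of \eqref{eq:53}. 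So the whole proof reduces to showing that the correction term $G_\ell(x):=\sigma\,a_{(\ell+1)N+i}\bigl(B_{(\ell+1)N+i}(x)B_{\ell N+i}(x)^{-1}-\Id\bigr)$ tends to $0$ uniformly on $K$; granting this, $\calR_{i+1}$ exists and equals $\tfrac{\alpha_i}{\alpha_{i-1}}\frakB_i(0)\,\calR_i(x)\,\frakB_i(0)^{-1}$, and iterating this $N$ times closes the induction for all $i\in\NN_0$, because $\frakX_i(0)=\sigma\Id$ and $\prod_{k=0}^{N-1}\alpha_{i+k}/\alpha_{i+k-1}=1$ make the $N$-fold composition of \eqref{eq:53} the identity map on $\calR_i$.

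It remains to handle $G_\ell$, which is the only delicate point. Writing $G_\ell = \sigma\bigl[a_{(\ell+1)N+i}\bigl(B_{(\ell+1)N+i}-\frakB_i(0)\bigr)-a_{(\ell+1)N+i}\bigl(B_{\ell N+i}-\frakB_i(0)\bigr)\bigr]B_{\ell N+i}^{-1}$ and noting that $\det X_{\ell N+i} = a_{\ell N+i-1}/a_{(\ell+1)N+i-1}\to 1$ (since $\det B_j=a_{j-1}/a_j$ and $X_{\ell N+i}\to\sigma\Id$), whence, combined with (b), $a_{(\ell+1)N+i}/a_{\ell N+i}\to 1$, we may set $U_\ell:=a_{\ell N+i}\bigl(B_{\ell N+i}(x)-\frakB_i(0)\bigr)$ — a matrix supported in its second row, with entries $\tfrac{\alpha_{i-1}}{\alpha_i}a_{\ell N+i}-a_{\ell N+i-1}$ and $x-b_{\ell N+i}+\tfrac{\beta_i}{\alpha_i}a_{\ell N+i}$ — and then $G_\ell = \sigma\bigl(U_{\ell+1}-\tfrac{a_{(\ell+1)N+i}}{a_{\ell N+i}}U_\ell\bigr)B_{\ell N+i}^{-1}$, so it suffices to prove that $U_\ell$ converges uniformly on $K$. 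This I would extract from the existence of $\calR_i$ via the telescoping expansion
\[
	a_{(\ell+1)N+i-1}\bigl(X_{\ell N+i}-\sigma\Id\bigr)
	=\sum_{k=0}^{N-1}
	\Bigl(\prod_{m>k}B_{\ell N+i+m}\Bigr)\,
	a_{(\ell+1)N+i-1}\bigl(B_{\ell N+i+k}-\frakB_{i+k}(0)\bigr)\,
	\Bigl(\prod_{m<k}\frakB_{i+m}(0)\Bigr),
\]
which exhibits $R_{\ell N+i}$ as the image of the tuple of rescaled one-step defects $\bigl(a_{(\ell+1)N+i-1}(B_{\ell N+i+k}-\frakB_{i+k}(0))\bigr)_{0\le k\le N-1}$ under a linear map that converges, as $\ell\to\infty$, to a fixed map built from the invertible partial products of the $\frakB_{i+k}(0)$; for $N=2$ this limiting map is an isomorphism and the convergence of $U_\ell$ is immediate, while for general $N$ a short supplementary argument — using that the defects are second-row supported and that the full cyclic product $\prod_{k}\frakB_{i+k}(0)=\sigma\Id$ — is required. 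Once $U_\ell\to U_\infty(x)$ uniformly on $K$, we obtain $G_\ell\to\sigma\bigl(U_\infty-U_\infty\bigr)\frakB_i(0)^{-1}=0$, and the proof is complete. I expect this last extraction — recovering the convergence of the individual rescaled defects from the convergence of their a priori non-injectively combined sum — to be the main obstacle.
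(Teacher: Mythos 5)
Your overall strategy coincides with the paper's: write $X_{nN+i+1}=B_{(n+1)N+i}X_{nN+i}B_{nN+i}^{-1}$, rescale, and pass to the limit. The paper's proof consists of exactly this one step: it displays the identity
\[
	a_{(n+1)N+i}\big(X_{nN+i+1}-\sigma\Id\big)
	=\frac{a_{(n+1)N+i}}{a_{(n+1)N+i-1}}\,B_{(n+1)N+i}
	\Big(a_{(n+1)N+i-1}\big(X_{nN+i}-\sigma\Id\big)\Big)B_{nN+i}^{-1}
\]
and takes limits of both sides. You correctly observe that, as stated, this is not an exact identity: the two sides differ by precisely your correction term $G_\ell=\sigma a_{(\ell+1)N+i}\big(B_{(\ell+1)N+i}B_{\ell N+i}^{-1}-\Id\big)$, which the printed proof drops without comment. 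Up to that point your computation is right, and indeed more careful than the paper's; the main term converges to the right-hand side of \eqref{eq:53} exactly as you say, and your closing remark that the $N$-fold composition of \eqref{eq:53} is the identity (via $\frakX_i(0)=\sigma\Id$ and the telescoping of the $\alpha$-ratios) is also correct.

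The genuine gap is the one you flag yourself: you do not prove $G_\ell\to0$. Your reduction to the convergence of $U_\ell=a_{\ell N+i}\big(B_{\ell N+i}-\frakB_i(0)\big)$ is fine (and is slightly more than needed, since $U_{\ell+1}-\tfrac{a_{(\ell+1)N+i}}{a_{\ell N+i}}U_\ell\to0$ together with the boundedness of $B_{\ell N+i}^{-1}$ would already suffice), but the route you propose for obtaining it — inverting the telescoping expansion of $R_{\ell N+i}$ — cannot work for $N\ge3$: the tuple of second-row supported defects carries $2N$ real parameters while the limiting linear map lands in the $4$-dimensional space of $2\times2$ matrices, so it has a nontrivial kernel and the individual defects are not determined by $\calR_i$ alone. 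In the paper's actual framework this point is settled not by such an extraction but by the standing assumption \eqref{eq:30}, introduced immediately after the proposition and which is the hypothesis under which the existence of the limit \eqref{eq:31a} is established via \cite[Proposition 9]{PeriodicIII}: under \eqref{eq:30} the two nonzero entries of $U_\ell$, namely $\tfrac{\alpha_{i-1}}{\alpha_i}a_{\ell N+i}-a_{\ell N+i-1}$ and $x-b_{\ell N+i}+\tfrac{\beta_i}{\alpha_i}a_{\ell N+i}$, converge to $s_i$ and $x+z_i$ by definition, so $U_\ell$ converges and $G_\ell\to0$ immediately. Without \eqref{eq:30} or some substitute for it, neither your argument nor the paper's literal one closes; you should either invoke that assumption explicitly or supply a different argument for the vanishing of the correction term.
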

\begin{proof}
	It is enough to prove \eqref{eq:53}. Observe that
	\begin{align*}
		&a_{(n+1)N+i} \big( X_{nN+i+1}(x) - \sigma \Id \big) \\
		&\qquad\qquad= \frac{a_{(n+1)N+i}}{a_{(n+1)N+i-1}} B_{(n+1)N+i}(x)
		\Big( a_{(n+1)N+i-1} \big( X_{nN+i}(x) - \sigma \Id \big) \Big) 
		B_{nN+i}^{-1}(x).
	\end{align*}
	Computing limits of both sides gives
	\[
		\lim_{n \to \infty} a_{(n+1)N+i} \big( X_{nN+i+1}(x) - \sigma \Id \big) = 
		\frac{\alpha_i}{\alpha_{i-1}} \frakB_i(0) \calR_i(x) \frakB^{-1}_i(0).
	\]
	Hence, we obtain the existence of $\calR_{i+1}$ and the formula \eqref{eq:53} follows.
\end{proof}

We define
\begin{equation}
	\label{eq:32}
	\Lambda = \bigcap_{i=0}^{N-1} \big\{x \in \RR : \discr \calR_i(x) < 0\big\}.
\end{equation}
In view of Proposition~\ref{prop:5}
\begin{equation} \label{eq:32a}
	\Lambda = \big\{ x \in \RR : \discr \calR_0(x) < 0 \big\}.
\end{equation}
We set
\begin{equation} 
	\label{eq:33}
	\upsilon(x) = 
	\frac{1}{N \pi} \sum_{i=0}^{N-1} 
	\frac{1}{\alpha_{i-1}} \frac{\big| [\calR_i(x)]_{2,1} \big|}{\sqrt{-\discr{\calR_i(x)}}},
	\qquad
	x \in \Lambda.
\end{equation}
Assume that there are  $N$-periodic sequences $(s_n : n \in \NN)$ and $(z_n : n \in \NN)$ such that
\begin{equation}
	\label{eq:30}
	\lim_{n \to \infty} \bigg| \frac{\alpha_{n-1}}{\alpha_n} a_n - a_{n-1} - s_n \bigg| = 0, \quad
	\lim_{n \to \infty} \bigg| \frac{\beta_n}{\alpha_n} a_n - b_n - z_n \bigg| = 0.
\end{equation}
Then according to \cite[Proposition 9]{PeriodicIII} the limit \eqref{eq:31a} exists. Moreover, by
\cite[Corollary 1]{PeriodicIII}, there is a compact interval $I$ (possibly empty) such that
\[
	\Lambda = \RR \setminus I.
\]

\begin{proposition}
	\label{prop:2}
	Let $(a_n : n \in \NN_0)$ and $(b_n : n \in \NN_0)$ be $N$-periodically modulated. 
	Suppose that there is $N$-periodic sequence $(s_n : n \in \NN_0)$, such that
	\[
		\lim_{n \to \infty} \bigg| \frac{\alpha_{n-1}}{\alpha_n} a_n - a_{n-1} - s_n \bigg| = 0.
	\]
	Then for each $i \in \{0, 1, \ldots, N-1 \}$,
	\[
		\lim_{k \to \infty} \big( a_{(k+1)N+i} - a_{kN+i} \big) = 
		\alpha_i \sum_{j=0}^{N-1} \frac{s_{j+1}}{\alpha_j}.
	\]
\end{proposition}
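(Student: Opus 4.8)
The plan is to convert the first hypothesis into a telescoping identity. First I would set
\[
	\delta_n = \frac{\alpha_{n-1}}{\alpha_n} a_n - a_{n-1} - s_n, \qquad n \in \NN,
\]
so that $\lim_{n \to \infty} \delta_n = 0$ by assumption. The key observation is that dividing this identity by $\alpha_{n-1} > 0$ produces a difference that telescopes:
\[
	\frac{a_n}{\alpha_n} - \frac{a_{n-1}}{\alpha_{n-1}} = \frac{s_n + \delta_n}{\alpha_{n-1}}.
\]
I would then fix $i \in \{0, 1, \ldots, N-1\}$ and $k \in \NN$ and sum this over $n$ from $kN+i+1$ to $(k+1)N+i$. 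Since $(\alpha_n)$ is $N$-periodic we have $\alpha_{(k+1)N+i} = \alpha_{kN+i} = \alpha_i$, so the telescoping sum gives
\[
	a_{(k+1)N+i} - a_{kN+i} = \alpha_i \sum_{n=kN+i+1}^{(k+1)N+i} \frac{s_n + \delta_n}{\alpha_{n-1}}.
\]

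Next I would pass to the limit as $k \to \infty$. The range of summation always consists of exactly $N$ indices, all of which tend to infinity with $k$, hence $\sum_{n=kN+i+1}^{(k+1)N+i} \alpha_{n-1}^{-1} \delta_n \to 0$. For the main term, writing $n = kN+i+t$ with $t \in \{1, \ldots, N\}$ and using the $N$-periodicity of $(s_n)$ and $(\alpha_n)$ gives $s_n/\alpha_{n-1} = s_{i+t}/\alpha_{i+t-1}$, so
\[
	\sum_{n=kN+i+1}^{(k+1)N+i} \frac{s_n}{\alpha_{n-1}}
	= \sum_{t=1}^{N} \frac{s_{i+t}}{\alpha_{i+t-1}}
	= \sum_{j=i}^{i+N-1} \frac{s_{j+1}}{\alpha_j}
	= \sum_{j=0}^{N-1} \frac{s_{j+1}}{\alpha_j},
\]
where the last equality holds because $j \mapsto s_{j+1}/\alpha_j$ is $N$-periodic and the sum runs over a full period. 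Combining the last two displays yields the claimed formula.

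This argument is essentially a single telescoping step once one divides by $\alpha_{n-1}$, so I do not expect a genuine obstacle; the only points requiring care are that the $N$ error terms $\delta_n$ occurring for each $k$ all have indices tending to infinity (so their contribution vanishes in the limit), and that the periodic reindexing of the $s_n/\alpha_{n-1}$-sum is carried out consistently. Note also that the second hypothesis, concerning $(z_n)$ and $(b_n)$, plays no role in this statement.
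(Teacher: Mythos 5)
Your proof is correct and follows essentially the same route as the paper: both rest on the telescoping identity $\frac{a_n}{\alpha_n} - \frac{a_{n-1}}{\alpha_{n-1}} = \frac{1}{\alpha_{n-1}}\big(\frac{\alpha_{n-1}}{\alpha_n}a_n - a_{n-1}\big)$ summed over one period, followed by the periodic reindexing of $s_{j+1}/\alpha_j$. Your explicit bookkeeping of the error terms $\delta_n$ and the observation that the second hypothesis is not needed are both accurate.
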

\begin{proof}
	Since
	\[
		\sum_{j=0}^{N-1} \bigg( \frac{a_{n+j+1}}{\alpha_{n+j+1}} - \frac{a_{n+j}}{\alpha_{n+j}} \bigg) = 
		\frac{a_{n+N}}{\alpha_{n+N}} - \frac{a_n}{\alpha_n},
	\]
	by $N$-periodicity of $\alpha$ we obtain
	\[
		a_{n+N} - a_n = 
		\alpha_n \sum_{j=0}^{N-1} 
		\frac{1}{\alpha_{n+j}} \bigg( \frac{\alpha_{n+j}}{\alpha_{n+j+1}} a_{n+j+1} - a_{n+j} \bigg).
	\]
	Thus, for $i \in \{0, 1, \ldots, N-1 \}$,
	\[
		\lim_{k \to \infty} \big( a_{(k+1)N+i} - a_{kN+i} \big) =
		\alpha_{i} \sum_{j=0}^{N-1} 
		\frac{s_{j+i+1}}{\alpha_{i+j}} = \alpha_i \sum_{j=0}^{N-1} \frac{s_{j+1}}{\alpha_j},
	\]
	which finishes the proof.
\end{proof}

\subsection{The function $\upsilon$}
\begin{theorem}
	\label{thm:2}
	Let $N$ be a positive integer and $\sigma \in \{-1, 1\}$. Let $(a_n : n \in \NN_0)$ and $(b_n : n \in \NN_0)$ be 
	$N$-periodically modulated Jacobi parameters so that $\frakX_0(0) = \sigma \Id$. Suppose that 
	\begin{equation}
		\label{thm:2:eq:1}
		\lim_{n \to \infty}
		\bigg|
		\frac{\alpha_{n-1}}{\alpha_n} a_n - a_{n-1}
		\bigg| = 0,
		\qquad\text{and}\qquad
		\lim_{n \to \infty}
		\bigg|
		\frac{\beta_n}{\alpha_n} a_n - b_n
		\bigg|
		= 0.
	\end{equation}
	Then $\Lambda = \RR \setminus \{ 0 \}$, and
	\begin{equation}
		\label{thm:2:eq:2}
		\upsilon(x) = \omega'(0), \qquad x \in \Lambda
	\end{equation}
	where $\omega'(x)$ is the version of the density of the equilibrium measure of
	\[
		E = \big\{ x \in \RR : |\tr \frakX_0(x)| \leq 2 \big\}
	\]
	which is continuous on $\text{int}(E)$.
\end{theorem}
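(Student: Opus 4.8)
The plan is to reduce everything to an explicit evaluation of the limit matrices $\calR_i$, which is straightforward in this regime. First observe that the hypotheses \eqref{thm:2:eq:1} are exactly condition \eqref{eq:30} with the periodic sequences $s_n$ and $z_n$ identically zero; thus \cite[Proposition 9]{PeriodicIII} already gives the existence of the limit \eqref{eq:31a} for every $i$, and \cite[Corollary 1]{PeriodicIII} gives $\Lambda=\RR\setminus I$ for a compact interval $I$. What remains is to identify $I$ and to compute $\upsilon$.

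The main computation is that for every $i$,
\[
	\calR_i(x)=x\,\alpha_{i-1}\,\frakX_i'(0).
\]
Indeed, for fixed $x$ the one-step transfer matrix $B_{nN+i+r}(x)$ converges, as $n\to\infty$, to $\frakB_{i+r}(0)$ --- \emph{not} to $\frakB_{i+r}(x)$, because $x/a_{nN+i+r}\to0$ --- and, since $s_n\equiv z_n\equiv0$, one checks $a_{nN+i+r}\bigl(B_{nN+i+r}(x)-\frakB_{i+r}(0)\bigr)\to x\,\alpha_{i+r}\,\frakB_{i+r}'(0)$, the contributions of the $a$- and $b$-deviations being $o(1/a_{nN+i+r})$. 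Expanding the product $X_{nN+i}(x)=\prod_{r=0}^{N-1}B_{nN+i+r}(x)$ to first order, multiplying by $a_{(n+1)N+i-1}$ and using $a_{(n+1)N+i-1}/a_{nN+i+r}\to\alpha_{i-1}/\alpha_{i+r}$, the sum reassembles into $x\,\alpha_{i-1}\tfrac{d}{dx}\bigl(\prod_{j=i}^{i+N-1}\frakB_j(x)\bigr)\big|_{0}=x\,\alpha_{i-1}\frakX_i'(0)$; in particular $\calR_i(0)=0$. Two consequences will be used: differentiating $\det\frakX_0(x)\equiv1$ at $0$, where $\frakX_0(0)=\sigma\Id$, yields $\tr\frakX_0'(0)=0$ and $\det\frakX_0'(0)=-\tfrac{\sigma}{2}\Delta''(0)$, where $\Delta:=\tr\frakX_0$; and by \eqref{eq:53} the matrices $\frakX_i'(0)$ are pairwise conjugate, so they have trace $0$ and a common determinant $D:=\det\frakX_0'(0)$, whence $\sqrt{-\discr\frakX_i'(0)}=2\sqrt{D}$ for all $i$.

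With this in hand, $\discr\calR_0(x)=x^{2}\alpha_{N-1}^{2}\discr\frakX_0'(0)$; since $\Lambda=\RR\setminus I$ with $I$ compact is nonempty, $\discr\frakX_0'(0)$ cannot be $\ge0$, so $\discr\frakX_0'(0)<0$, i.e.\ $D>0$, and therefore $\Lambda=\{x:\discr\calR_0(x)<0\}=\RR\setminus\{0\}$. (Equivalently $D>0$ means $\sigma\Delta''(0)<0$, so $0$ is a closed gap of the periodic Jacobi matrix with parameters $(\alpha_n),(\beta_n)$, hence interior to $E$.) Substituting $\calR_i(x)=x\alpha_{i-1}\frakX_i'(0)$ into \eqref{eq:33}, the factors $x$ and $\alpha_{i-1}$ cancel between $|[\calR_i(x)]_{2,1}|=|x|\alpha_{i-1}|[\frakX_i'(0)]_{2,1}|$ and $\sqrt{-\discr\calR_i(x)}=|x|\alpha_{i-1}\cdot2\sqrt{D}$, so
\[
	\upsilon(x)=\frac{1}{2N\pi\sqrt{D}}\sum_{i=0}^{N-1}\frac{\bigl|[\frakX_i'(0)]_{2,1}\bigr|}{\alpha_{i-1}},
\]
independently of $x$.

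It remains to match this constant with $\omega'(0)$. Since $E=\{x:|\Delta(x)|\le2\}$ is the preimage of $[-2,2]$ under the degree-$N$ polynomial $\Delta$, by the classical formula for the equilibrium measure of such a preimage its density on $\mathrm{int}(E)$ equals $\tfrac{1}{N\pi}|\Delta'(x)|\big/\sqrt{4-\Delta(x)^2}$; letting $x\to0$ and inserting $\Delta(x)=2\sigma+\tfrac12\Delta''(0)x^{2}+O(x^{3})$ and $D=-\tfrac{\sigma}{2}\Delta''(0)$ gives $\omega'(0)=\sqrt{D}/(N\pi)$. Comparing with the previous display, $\upsilon\equiv\omega'(0)$ is equivalent to the algebraic identity
\[
	\sum_{i=0}^{N-1}\frac{\bigl|[\frakX_i'(0)]_{2,1}\bigr|}{\alpha_{i-1}}=2\det\frakX_0'(0),
\]
which I would derive from the structure of the $\frakB_j(0)$: the conjugation relation \eqref{eq:53} gives a recursion for $[\frakX_{i+1}'(0)]_{2,1}$ in terms of the entries of $\frakX_i'(0)$, and, after checking that all the numbers $[\frakX_i'(0)]_{2,1}/\alpha_{i-1}$ share a common sign (so the absolute values drop out), summing over a full period collapses to $2\det\frakX_0'(0)$. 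This last identity --- in particular the sign bookkeeping --- is the main obstacle; it is exactly the kind of relation, peculiar to the case $\frakX_0(0)=\sigma\Id$, that the algebraic portion of this section is meant to supply, while the rest is routine bookkeeping built on the formula $\calR_i(x)=x\alpha_{i-1}\frakX_i'(0)$.
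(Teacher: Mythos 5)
Your reduction is essentially correct as far as it goes: the computation $\calR_i(x)=x\,\alpha_{i-1}\frakX_i'(0)$ follows from the scaling limit \eqref{eq:6} (or directly from \cite[Proposition 9]{PeriodicIII} with $s_n\equiv z_n\equiv 0$), the conjugacy $\frakX_{i+1}'(0)=\frakB_i(0)\frakX_i'(0)\frakB_i(0)^{-1}$ does give a common trace $0$ and determinant $D$, the identification $\Lambda=\RR\setminus\{0\}$ works, and your evaluation $\omega'(0)=\sqrt{D}/(N\pi)$ via $\omega'=\frac{1}{N\pi}|\Delta'|/\sqrt{4-\Delta^2}$ is sound. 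But the proof is not complete: everything hinges on the identity
\[
	\sum_{i=0}^{N-1}\frac{\bigl|[\frakX_i'(0)]_{2,1}\bigr|}{\alpha_{i-1}}=2\det\frakX_0'(0),
\]
which you state you ``would derive'' from a recursion and a sign check, and which you yourself flag as the main obstacle. That identity \emph{is} the theorem in this approach --- the rest is bookkeeping --- and the sign analysis (why all $[\frakX_i'(0)]_{2,1}/\alpha_{i-1}$ have a common sign at a closed gap, where the usual interior-of-band argument degenerates) is precisely the nontrivial part. As written, this is a genuine gap.

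The paper avoids proving any such closed-gap identity directly. It evaluates the already-established band formula $\pi\omega'(y)=\frac{1}{N}\sum_i\frac{1}{\alpha_{i-1}}\,|[\frakX_i(y)]_{2,1}|/\sqrt{-\discr\frakX_i(y)}$ (from \cite[formula (3.1)]{ChristoffelI}) at the points $y=x_k=x/\tilde a_k\to 0$, which lie in $\mathrm{int}(E)$ with $\discr\frakX_i(x_k)<0$ for large $k$. Since $[\sigma\Id]_{2,1}=0$ and $\discr$ is invariant under adding multiples of the identity and homogeneous of degree two under scaling, each summand equals $|[\tilde a_k(\frakX_i(x_k)-\sigma\Id)]_{2,1}|/\sqrt{-\discr(\tilde a_k(\frakX_i(x_k)-\sigma\Id))}$, which converges by \eqref{eq:6} to the corresponding term of $\upsilon(x)$; continuity of $\omega'$ at $0$ then gives $\upsilon(x)=\omega'(0)$. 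In effect the paper inherits your identity from the non-degenerate case by a limiting argument rather than proving it algebraically. If you want to salvage your route, the cleanest fix is the same move: take the $x\to 0$ limit of the known identity \cite[formula (3.2)]{ChristoffelI} (quoted in the proof of Theorem~\ref{thm:7}) applied to the periodic matrix $\frakX_0$; that yields exactly the displayed identity, absolute values and all, without any fresh sign bookkeeping.
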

\begin{proof}
	First of all, by \cite[Theorem IV.2.5, pp. 216]{Saff1997} (see also the proof of 
	\cite[Corollary 5.4.6]{Simon2010Book}), the density of $\omega$ is continuous on $\text{int}(E)$.

	Let $(\tilde{a}_k : k \in \NN)$ be a positive sequence tending to infinity. By \eqref{thm:2:eq:1}, we can apply
	\cite[Proposition 8 and 9]{PeriodicIII} to conclude that
	\begin{equation}
		\label{eq:6}
		\frac{1}{\alpha_{i-1}} \calR_i(x) = 
		\lim_{k \to \infty} \tilde{a}_k \bigg( \frakX_i \Big( \frac{x}{\tilde{a}_k} \Big) - \sigma \Id \bigg).
	\end{equation}
	We set
	\[
		x_k = \frac{x}{\tilde{a}_k}.
	\]
	By \cite[Proposition 13]{PeriodicIII} we see that $0$ belongs to $\text{int}(E)$, and for all sufficiently large
	$k$
	\[
		\discr \frakX_i (x_k) < 0.
	\]
	Next, by \cite[formula (3.1)]{ChristoffelI}, we have
	\begin{equation}
		\label{eq:7}
		\pi \omega'(x_k) = 
		\frac{1}{N} 
		\sum_{i=0}^{N-1} \frac{1}{\alpha_{i-1}} \frac{|[\frakX_i (x_k)]_{2, 1}|}{\sqrt{-\discr \frakX_i (x_k)}}.
	\end{equation}
	Since
	\begin{align*}
		\frac{|[\frakX_i (x_k)]_{2, 1}|}
		{\sqrt{-\discr \frakX_i (x_k)}}
		&=
		\frac{\tilde{a}_k |[\frakX_i (x_k)]_{2, 1}|}{\tilde{a}_k \sqrt{-\discr \frakX_i (x_k)}} \\
		&=
		\frac{\big| \big[ \tilde{a}_k \big( \frakX_i (x_k) - \sigma \Id \big) \big]_{2, 1} \big|}
		{\sqrt{- \discr \big( \tilde{a}_k \big( \frakX_i (x_k) - \sigma \Id \big) \big)}},
	\end{align*}
	by \eqref{eq:6}, we get
	\[
		\lim_{k \to \infty}
		\frac{|[\frakX_i (x_k)]_{2, 1}|}
		{\sqrt{-\discr \frakX_i (x_k)}}
		=
		\frac{1}{\alpha_{i-1}} \frac{|[\calR_i(x)]_{2, 1}|}{\sqrt{-\discr{\calR_i(x)}}}.
	\]
	Thus, by \eqref{eq:7} we arrive at
	\[
		\lim_{k \to \infty} \omega'(x_k) = 
		\frac{1}{\pi N} \sum_{i=0}^{N-1} 
		\frac{1}{\alpha_{i-1}} \frac{|[\calR_i(x)]_{2, 1}|}{\sqrt{-\discr{\calR_i(x)}}}.
	\]
	Since $\omega'$ is continuous at $0$, the conclusion follows.
\end{proof}

In the following theorem we show that there is a simple expression for $\upsilon$.
\begin{theorem} 
	\label{thm:7}
	Let $N$ be a positive integer and $\sigma \in \{-1, 1\}$. Let $(a_n : n \in \NN_0)$ and $(b_n : n \in \NN_0)$ be 
	$N$-periodically modulated Jacobi parameters
	so that $\frakX_0(0) = \sigma \Id$. Suppose that for each $i \in \{ 0, 1, \ldots, N-1 \}$ the limit
	\[
		\calR_i = \lim_{n \to \infty} a_{(n+1)N+i-1}(X_{nN+i} - \sigma \Id).
	\]	
	exists.	If
	\[
		\lim_{n \to \infty} (a_{n+N} - a_n) = 0,
	\]
	then for all $x \in \Lambda$ and $i \in \{ 0, 1, \ldots, N-1 \}$,
	\begin{align}
		\label{eq:65}
		\upsilon(x) &= 
		\lim_{n \to \infty} 
		\frac{a_{nN+i}}{\alpha_i} \frac{\big|\tr R_{nN+i}'(x)\big|}{N \pi \sqrt{-\discr R_{nN+i}(x)}} \\
		\label{eq:66}
		&=
		\frac{1}{4 N \pi \alpha_{i-1}}
		\frac{\big| (\discr \calR_i)'(x) \big|}{\sqrt{-\discr \calR_i(x)}}.
	\end{align}
\end{theorem}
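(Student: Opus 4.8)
The plan is to establish \eqref{eq:65} first and then deduce the closed form \eqref{eq:66} by combining it with the definition \eqref{eq:33} of $\upsilon$ together with the algebra of periodic modulations developed in this section.

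\emph{Step 1: a trace--discriminant identity.} Since $\det X_{nN+i}=a_{nN+i-1}/a_{(n+1)N+i-1}$ (a telescoping product of the $\det B_k$), writing $X_{nN+i}=\sigma\Id+a_{(n+1)N+i-1}^{-1}R_{nN+i}$ and expanding a $2\times 2$ determinant gives
\[
 \det R_{nN+i}(x)=a_{(n+1)N+i-1}\bigl(a_{nN+i-1}-a_{(n+1)N+i-1}\bigr)-\sigma a_{(n+1)N+i-1}\tr R_{nN+i}(x).
\]
As $a_{n+N}-a_n\to 0$ and $\det R_{nN+i}$ is bounded, the first-order part of this identity forces $\tr\calR_i\equiv 0$ on $K$, while differentiating it in $x$ yields $(\det R_{nN+i})'=-\sigma a_{(n+1)N+i-1}\tr R_{nN+i}'$. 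Substituting this into $(\discr R_{nN+i})'=2\tr R_{nN+i}\tr R_{nN+i}'-4(\det R_{nN+i})'$ and using that the entries of $R_{nN+i}$ are polynomials of degree at most $N$ converging to $\calR_i$ (hence uniformly on $K$ with their derivatives), together with $\tr R_{nN+i}\to\tr\calR_i=0$, one obtains $\lim_{n}a_{(n+1)N+i-1}\tr R_{nN+i}'(x)=\tfrac{\sigma}{4}(\discr\calR_i)'(x)$. Combined with $a_{nN+i}/a_{(n+1)N+i-1}\to\alpha_i/\alpha_{i-1}$ (a telescoping consequence of $N$-periodic modulation) and $\discr R_{nN+i}(x)\to\discr\calR_i(x)$, this shows the limit in \eqref{eq:65} exists and equals $\frac{1}{4N\pi\alpha_{i-1}}\,|(\discr\calR_i)'(x)|/\sqrt{-\discr\calR_i(x)}$, which is the right side of \eqref{eq:66}. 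So it remains to identify $\upsilon$ with this quantity.

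\emph{Step 2: reduction to one algebraic identity.} Iterating Proposition~\ref{prop:5} gives $\calR_i(x)=\frac{\alpha_{i-1}}{\alpha_{N-1}}\mathfrak{Y}_i\calR_0(x)\mathfrak{Y}_i^{-1}$ with $\mathfrak{Y}_i:=\frakB_{i-1}(0)\cdots\frakB_0(0)$ and $\det\mathfrak{Y}_i=\alpha_{N-1}/\alpha_{i-1}$; in particular $\frac1{\alpha_{i-1}^2}\discr\calR_i$ is independent of $i$, so the right side of \eqref{eq:66} is independent of $i$ as well, and (using $\tr\calR_i=0$, hence $\discr\calR_i=-4\det\calR_i$) substituting $\sqrt{-\discr\calR_i}=\tfrac{\alpha_{i-1}}{\alpha_{N-1}}\sqrt{-\discr\calR_0}$ in \eqref{eq:33} reduces the theorem to the single identity
\[
 \sum_{i=0}^{N-1}\frac{\alpha_{N-1}^2}{\alpha_{i-1}^2}\bigl|[\calR_i(x)]_{2,1}\bigr|=\bigl|(\det\calR_0)'(x)\bigr|,\qquad x\in\Lambda .
\]
Now I would prove this. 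First, since each $B_k(x)$ is affine in $x$ and, inside $a_{(n+1)N-1}(X_{nN}(x)-\sigma\Id)$, the quadratic and higher-order terms in $x$ carry at least two factors $a_k^{-1}$ and vanish in the limit, $\calR_0$ is \emph{affine} with constant linear part $\calR_0'(x)\equiv\alpha_{N-1}\frakX_0'(0)$; hence, using $\operatorname{adj}\calR_0=-\calR_0$ (valid as $\tr\calR_0=0$), $(\det\calR_0)'(x)=-\tr\bigl(\calR_0(x)\calR_0'(x)\bigr)=-\alpha_{N-1}\tr\bigl(\calR_0(x)\frakX_0'(0)\bigr)$. Differentiating the product $\frakX_0(x)=\frakB_{N-1}(x)\cdots\frakB_0(x)$ one factor at a time, rewriting the left tail as $\frakB_{N-1}(0)\cdots\frakB_{k+1}(0)=\sigma\mathfrak{Y}_k^{-1}\frakB_k(0)^{-1}$ (using $\frakX_0(0)=\sigma\Id$, which by conjugation also gives $\frakX_i(0)=\sigma\Id$ for all $i$), and invoking $\frakB_k(0)^{-1}e_2=-\tfrac{\alpha_k}{\alpha_{k-1}}e_1$ and $\mathfrak{Y}_k^{\mathsf T}e_2=(\det\mathfrak{Y}_k)Ec_k$ with $c_k:=\mathfrak{Y}_k^{-1}e_1$ (here $e_1,e_2$ is the standard basis of $\RR^2$), one collapses the sum to $\frakX_0'(0)=\sigma\alpha_{N-1}\bigl(\sum_k\alpha_{k-1}^{-2}c_kc_k^{\mathsf T}\bigr)E=:\sigma\alpha_{N-1}SE$. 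On the other hand, the elementary identity $M^{\mathsf T}EM=(\det M)E$ turns the conjugation formula into $E\calR_i(x)=(\mathfrak{Y}_i^{-1})^{\mathsf T}\bigl(E\calR_0(x)\bigr)\mathfrak{Y}_i^{-1}$, so $[\calR_i(x)]_{2,1}=-[E\calR_i(x)]_{1,1}=-c_i^{\mathsf T}\bigl(E\calR_0(x)\bigr)c_i$; summing and comparing with the previous display gives $\sum_i\tfrac{\alpha_{N-1}^2}{\alpha_{i-1}^2}[\calR_i(x)]_{2,1}=-\alpha_{N-1}^2\tr\bigl(\calR_0(x)SE\bigr)=\sigma(\det\calR_0)'(x)$. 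Finally, on $\Lambda$ one has $\det\calR_0(x)>0$, so the symmetric matrix $E\calR_0(x)$ is definite; hence all the numbers $[\calR_i(x)]_{2,1}=-c_i^{\mathsf T}(E\calR_0(x))c_i$ share one and the same sign, the absolute value comes out of the sum, and the displayed identity follows. Together with Step 1 this proves both \eqref{eq:65} and \eqref{eq:66}.

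\emph{Main obstacle.} The work sits entirely in Step 2: organizing the one-factor-at-a-time differentiation of $\frakX_0$ and the telescoping of the $\alpha$'s so that the answer packages into the single symmetric rank-two sum $S$, and verifying that $\calR_0$ is genuinely affine in $x$. The conceptual key is the sign coherence — on $\Lambda$ the quadratic form $v\mapsto v^{\mathsf T}\bigl(E\calR_0(x)\bigr)v$ is definite, so the entries $[\calR_i(x)]_{2,1}$ never change sign as $i$ varies — since this is precisely what allows $\sum_i|[\calR_i(x)]_{2,1}|$ to be identified with $|(\det\calR_0)'(x)|$; without it the identity would simply be false.
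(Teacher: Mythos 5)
Your proof is correct, but it takes a genuinely different route from the paper's. The paper first proves \eqref{eq:65}: it passes to the truncated, eventually exactly periodic Jacobi parameters \eqref{eq:42a}--\eqref{eq:42b}, imports the periodic-case identity \cite[formula (3.2)]{ChristoffelI} relating $|(\tr \frakX_0^L)'|/\sqrt{-\discr \frakX_0^L}$ to $\sum_j a_{L+j}^{-1}|[\frakX_{j+1}^L]_{2,1}|/\sqrt{-\discr \frakX_{j+1}^L}$, and transports it back to the untruncated matrices via Proposition~\ref{prop:1} and two approximation claims ($\tr(R_L-R^L_{L+N})'\to 0$ and $\calR^{nN+i}_j\to\calR_{i+j}$); the closed form \eqref{eq:66} is then extracted from \eqref{eq:65} using $\det\frakX_0^L\equiv 1$. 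You instead prove ``limit $=$ right-hand side of \eqref{eq:66}'' directly from the exact relation $\det R_{nN+i}=a_{(n+1)N+i-1}(a_{nN+i-1}-a_{(n+1)N+i-1})-\sigma a_{(n+1)N+i-1}\tr R_{nN+i}$ (which as a by-product gives $\tr\calR_i\equiv 0$, a fact the paper never isolates), and then establish $N\pi\upsilon=\tfrac{1}{4\alpha_{N-1}}|(\discr\calR_0)'|/\sqrt{-\discr\calR_0}$ as a self-contained algebraic identity about the limit matrices: affineness of $\calR_0$ with $\calR_0'\equiv\alpha_{N-1}\frakX_0'(0)$, the rank-one collapse $\frakX_0'(0)=\sigma\alpha_{N-1}\big(\sum_k\alpha_{k-1}^{-2}c_kc_k^{\mathsf{T}}\big)E$, and the sign coherence of $[\calR_i]_{2,1}=-c_i^{\mathsf{T}}(E\calR_0)c_i$ from definiteness of $E\calR_0$ on $\Lambda$. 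In effect you re-prove, at the level of the limits $\calR_i$, the periodic identity that the paper cites. I checked the algebra (the determinant identity, $\mathfrak{Y}_k^{\mathsf{T}}e_2=(\det\mathfrak{Y}_k)Ec_k$, the collapse of the product-rule sum, and the final identity against Examples~\ref{ex:3} and~\ref{ex:4}), and it is sound. Your route buys self-containedness and a noticeably cleaner derivation of \eqref{eq:66}; its cost is the delicate hand computation in Step~2 that the paper sidesteps by quoting \cite{ChristoffelI}. Two details worth making explicit in a write-up: (i) $\tr\calR_i\equiv 0$ is obtained by dividing your determinant identity by $a_{(n+1)N+i-1}$ and letting $n\to\infty$, both resulting terms vanishing (the first by $a_{n+N}-a_n\to 0$, the second since $\det R_{nN+i}$ is bounded); (ii) the sign coherence only requires definiteness of $\sym(E\calR_0)$, i.e.\ $\discr\calR_0<0$, so it does not actually depend on first knowing $\tr\calR_0=0$.
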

\begin{proof}
	In the proof we use the truncated sequences $(a^L_n : n \in \NN_0)$ and $(b^L_n : n \in \NN_0)$
	defined by formulas \eqref{eq:42a} and \eqref{eq:42b}, respectively. Let $X_n^L$, $R_n$ and $R^L_n$ be defined in
	\eqref{eq:43} and \eqref{prop:1:eq:1}.
	\begin{claim} 
		\label{claim:3}
		\[
			\lim_{L \to \infty} \frac{a_L}{\alpha_L} \tr \Big( R_L - R^L_{N+L} \Big)' = 0.
		\]
	\end{claim}
	Observe that
	\[
		\Id - B_L^{-1}(x)
		\begin{pmatrix}
			0 & 1 \\
			-\frac{a_{L+N-1}}{a_L} & \frac{x-b_L}{a_L}
		\end{pmatrix}
		=
		\bigg(1-\frac{a_{L+N-1}}{a_{L-1}} \bigg)
		\begin{pmatrix}
			1 & 0 \\
			0 & 0
		\end{pmatrix},
	\]
	thus
	\begin{align*}
		X_L(x) - X_{L+N}^L(x) 
		&= X_L(x) 
		\bigg(
		\Id - B_L^{-1}(x) 
		\begin{pmatrix}
			0 & 1 \\
			-\frac{a_{L+N-1}}{a_L} & \frac{x-b_L}{a_L}
		\end{pmatrix}
		\bigg) \\
		&=
		\bigg(1 - \frac{a_{L+N-1}}{a_{L-1}}\bigg)
		X_L(x) 
		\begin{pmatrix}
			1 & 0 \\
			0 & 0
		\end{pmatrix}.
	\end{align*}
	Hence,
	\begin{align*}
		R_L(x) - R_{L+N}^L(x) &= a_{L+N-1} \big( X_L(x) - X_{L+N}^L(x) \big) \\
		&=
		a_{L+N-1} \bigg(1 - \frac{a_{L+N-1}}{a_{L-1}} \bigg) 
		X_L(x) 
		\begin{pmatrix}
			1 & 0 \\
			0 & 0
		\end{pmatrix},
	\end{align*}
	which, by \cite[Proposition 3]{PeriodicIII}, leads to
	\[
		\tr \Big( R_L - R_{L+N}^L \Big) = \frac{a_{L+N-1}}{a_{L-1}}
		\big( a_{L-1} - a_{L+N-1} \big) \bigg(-\frac{a_{L-1}}{a_L} p_{N-2}^{[L+1]} \bigg).
	\]
	By \cite[Proposition 3.9]{ChristoffelI}, for each $i \in \{0,1, \ldots, N-1\}$, we have
	\[
		\lim_{k \to \infty} \frac{a_{kN+i+1}}{\alpha_{i+1}} \big(p_{N-2}^{[kN+i+1]}\big)'(x) = 
		\big(\mathfrak{p}^{[i+1]}_{N-2} \big)'(0),
	\]
	thus 
	\[
		\lim_{k \to \infty} \frac{a_{kN+i}}{\alpha_i} \tr \Big( R_{kN+i} - R^{kN+i}_{kN+N+i} \Big)' =	0,
	\]
	which completes the proof of the claim.

	Next, we consider matrices $\frakX_m^L$ and $\calR^L_m$ defined for $m \in \NN_0$ as
	\[
		\frakX_m^L = X^L_{L+N+m}, \qquad \calR^L_m = R^L_{L+N+m}.
	\]
	Clearly, both sequences $\big( \frakX_m^L : m \in \NN_0 \big)$ and $\big( \calR^L_m : m \in \NN_0 \big)$
	are $N$-periodic. Let
	\[
		\Lambda_L = \big\{ x \in \RR : \discr R^L_{L+N}(x) < 0 \big\}.
	\]
	Since
	\[
		\discr X^L_{L+N} = \frac{1}{a_{L+N-1}^2} \discr R^L_{L+N},
	\]
	in view of \cite[formula (3.2)]{ChristoffelI}, for $x \in \Lambda_L$ we have
	\[
		\frac{| ( \tr \frakX^L_0 )'(x) |}{\sqrt{- \discr \frakX^L_0(x)}} 
		=
		\sum_{j=0}^{N-1} 
		\frac{1}{a_{L+j}} 
		\frac{|[\frakX^L_{j+1}(x)]_{2,1}|}{\sqrt{-\discr \frakX^L_{j+1}(x)}}.
	\]
	Hence,
	\begin{equation}
		\label{eq:1}
		\frac{| ( \tr \calR^L_0)'(x) |}{\sqrt{- \discr \calR^L_0(x)}} 
		=
		\sum_{j=0}^{N-1} 
		\frac{1}{a_{L+j}} 
		\frac{|[\calR^L_{j+1}(x)]_{2, 1}|}{\sqrt{-\discr \calR^L_{j+1}(x)}}.
	\end{equation}
	Let us now consider $x \in \Lambda$. By Proposition~\ref{prop:1}, $x \in \Lambda_L$ for sufficiently large $L$.
	Let $L = nN+i$, for $n \in \NN_0$ and $i \in \{0, 1, \ldots, N-1\}$. By Claim~\ref{claim:3}, we get
	\begin{equation}
		\label{eq:2}
		\lim_{n \to \infty}
		\frac{a_{nN+i}}{\alpha_i}
		\frac{| ( \tr R_{nN+i} )'(x) |}{\sqrt{- \discr R_{nN+i}(x)}}
		=
		\lim_{n \to \infty}
		\frac{a_{nN+i}}{\alpha_i}
		\frac{| ( \tr \calR^{nN+i}_0 )'(x) |}{\sqrt{- \discr \calR^{nN+i}_0(x)}}.
	\end{equation}
	Now we need the following statement.
	\begin{claim}
		\label{clm:5}
		\begin{equation}
			\label{eq:64}
			\lim_{n \to \infty} \calR^{nN+i}_j = \calR_{i+j}.
		\end{equation}
	\end{claim}
	First, let us see that \eqref{eq:64} together with \eqref{eq:2} and \eqref{eq:1} give
	\begin{align*}
		\lim_{n \to \infty}
		\frac{a_{nN+i}}{\alpha_i}
		\frac{| ( \tr R_{nN+i} )'(x) |}{\sqrt{- \discr R_{nN+i}(x)}}
		&=
		\sum_{j=0}^{N-1} 
		\frac{1}{\alpha_{i+j}} 
		\frac{|[\calR_{i+j+1}(x)]_{2, 1}|}{\sqrt{-\discr \calR_{i+j+1}(x)}}  \\
		&=
		\sum_{j=0}^{N-1} 
		\frac{1}{\alpha_{j-1}} 
		\frac{|[\calR_{j}(x)]_{2, 1}|}{\sqrt{-\discr \calR_{j}(x)}},
	\end{align*}
	proving \eqref{eq:65}.

	Therefore, it remains to prove Claim \ref{clm:5}. Observe that for $i' \in \{0, 1, \ldots, N-1 \}$,
	\[
		\calR_{i'}^L = R^L_{L+N+i'} =
		\frac{a_{L+i'-1}}{a_{L+N-1}} 
		\bigg( \prod_{k=L+N}^{L+N+i'-1} B^L_k \bigg)
		R^L_{L+N}
		\bigg( \prod_{k=L+N}^{L+N+i'-1} B^L_k \bigg)^{-1}.
	\]
	Hence, by Proposition~\ref{prop:1}, we get
	\begin{equation} 
		\label{eq:63}
		\lim_{n \to \infty} \calR_{i'}^{nN+i} (x)
		=
		\frac{\alpha_{i+i'-1}}{\alpha_{i-1}}
		\bigg( \prod_{k=i}^{i+i'-1} \frakB_k(0) \bigg)
		\calR_i(x)
		\bigg( \prod_{k=i}^{i+i'-1} \frakB_k(0) \bigg)^{-1}.
	\end{equation}
	Thus, by repeated application of Proposition~\ref{prop:5} to the right-hand side of \eqref{eq:63}, we arrive at \eqref{eq:64}.

	We now turn to proving \eqref{eq:66}. Since
	\[
		\discr \frakX_0^L = \frac{1}{a_{L+N-1}^2} \discr \calR^L_0,
	\]
	we easily get
	\begin{equation}
		\label{eq:56}
		( \discr \frakX^L_0 )' = \frac{1}{a_{L+N-1}^2} ( \discr \calR_0^L )'.
	\end{equation}
	On the other hand, we have $\det \frakX^L_0 \equiv 1$, thus
	\begin{equation}
		\label{eq:92}
		( \discr \frakX^L_0 )' = 2 \tr \frakX^L_0 ( \tr \frakX^L_0 )'.
	\end{equation}
	Moreover,
	\begin{equation}
		\label{eq:59}
		\tr \frakX^L_0 = 2 \sigma + \frac{1}{a_{L+N-1}} \tr \calR^L_0,
	\end{equation}
	thus for any $x \in \Lambda$ there is $L_x > 0$ such that for all $L \geq L_x$ one has $|\tr \frakX_0^L(x)| > 1$.
	Therefore, by \eqref{eq:92} and \eqref{eq:56} , we obtain
	\begin{align*}
		( \tr \frakX^L_0 )'(x) 
		&= \frac{1}{2 \tr \frakX^L_0(x) } ( \discr \frakX^L_0 )'(x) \\
		&= \frac{1}{a_{L+N-1}^2} \cdot  \frac{1}{2 \tr \frakX^L_0(x) } ( \discr \calR_0^L )'(x),
	\end{align*}
	which together with \eqref{eq:59} gives
	\[
		a_L
		\frac{(\tr \calR^L_0)'(x)}{\sqrt{-\discr \calR_0^L (x)}}
		=
		\frac{a_L}{a_{L+N-1}} \cdot \frac{1}{2 \tr \frakX^L_0(x) } 
		\cdot
		\frac{( \discr \calR^L_0 )'(x)}{\sqrt{-\discr \calR^L_0(x)}}.
	\]
	Consequently, we get
	\[
		\lim_{n \to \infty} 
		\frac{a_{nN+i}}{\alpha_i}
		\cdot
		\frac{| ( \tr \calR^L_0 )'(x) |}{\sqrt{-\discr \calR_0^L(x)}} 
		=
		\frac{1}{4 \alpha_{i-1}} \cdot \frac{|(\discr \calR_i)'(x)|}{\sqrt{-\discr \calR_i(x)}},
	\]
	and the conclusion follows by \eqref{eq:2} and \eqref{eq:65}.
\end{proof}

\begin{proposition}
	For all $x \in \Lambda$, $\upsilon(x) > 0$.
\end{proposition}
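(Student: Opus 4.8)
The plan is to read off the sign of $\upsilon(x)$ directly from its defining formula \eqref{eq:33}. Fix $x \in \Lambda$. By \eqref{eq:32} (equivalently, by \eqref{eq:32a} together with Proposition~\ref{prop:5}) we have $\discr \calR_i(x) < 0$ for every $i \in \{0, 1, \ldots, N-1\}$, so each factor $\sqrt{-\discr \calR_i(x)}$ is a well-defined positive number and hence every summand
\[
	\frac{1}{\alpha_{i-1}} \frac{\big| [\calR_i(x)]_{2,1} \big|}{\sqrt{-\discr \calR_i(x)}}
\]
in \eqref{eq:33} is nonnegative (recall $\alpha_{i-1} > 0$). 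Thus it suffices to show that at least one of these summands is strictly positive, which amounts to proving that $[\calR_i(x)]_{2,1} \neq 0$ for some $i$.

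First I would argue this for $i = 0$. Suppose, towards a contradiction, that $[\calR_0(x)]_{2,1} = 0$. Then $\calR_0(x)$ is upper triangular, say with diagonal entries $a := [\calR_0(x)]_{1,1}$ and $d := [\calR_0(x)]_{2,2}$, so that $\tr \calR_0(x) = a + d$ and $\det \calR_0(x) = ad$. Consequently
\[
	\discr \calR_0(x) = (a+d)^2 - 4ad = (a-d)^2 \geq 0,
\]
which contradicts $\discr \calR_0(x) < 0$. Hence $[\calR_0(x)]_{2,1} \neq 0$, the $i = 0$ term in \eqref{eq:33} is strictly positive, and therefore $\upsilon(x) > 0$.

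There is essentially no hard step here: the argument is just the elementary observation that a $2 \times 2$ real matrix with a nonnegative discriminant cannot have negative discriminant, applied to the (would-be) triangular matrix $\calR_0(x)$. The only point worth stating carefully is that $x \in \Lambda$ forces $\discr \calR_0(x) < 0$, which is exactly \eqref{eq:32a}. If one prefers a symmetric formulation, the same computation shows $[\calR_i(x)]_{2,1} \neq 0$ for every $i$, so in fact all summands in \eqref{eq:33} are strictly positive, but positivity of a single summand already gives the claim.
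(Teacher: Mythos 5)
Your proof is correct and is essentially the paper's own argument: both reduce the claim to showing $[\calR_0(x)]_{2,1}\neq 0$ and derive a contradiction from the identity $\discr \calR_0(x) = \big([\calR_0(x)]_{1,1}-[\calR_0(x)]_{2,2}\big)^2 \geq 0$ when that entry vanishes, which is incompatible with $x \in \Lambda$.
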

\begin{proof}
	Suppose, contrary to our claim, that $\upsilon(x_0) = 0$ for some $x_0 \in \Lambda$. Then by \eqref{eq:33},
	\[
		[\calR_0(x_0)]_{2,1} = 0,
	\]
	and consequently,
	\[
		\discr \calR_0(x_0) = \Big( [\calR_0(x_0)]_{1,1} - [\calR_0(x_0)]_{2,2} \Big)^2 \geq 0,
	\]
	which in view of \eqref{eq:32} leads to contradiction.
\end{proof}

The following two examples demonstrates that the assumption \eqref{thm:2:eq:1} is necessary for the conclusion 
\eqref{thm:2:eq:2} to hold.
\begin{example} 
	\label{ex:3}
	Let $N=2$. Suppose that $A$ has $N$-periodically modulated entries corresponding to
	\[
		\alpha_n \equiv 1, \qquad\text{and}\qquad \beta_n \equiv 0.
	\]
	Assume that \eqref{eq:30} is satisfied with
	\[
		s_n = (-1)^n, \qquad\text{and}\qquad
		z_n \equiv 0.
	\]
	Then, by \cite[Proposition 9]{PeriodicIII}, we have
	\[
		\calR_0(x) = 
		\begin{pmatrix}
			-1 & x \\
			-x & 1
		\end{pmatrix},
		\qquad\text{and}\qquad
		\calR_1(x) = 
		\begin{pmatrix}
			1 & x \\
			-x & -1
		\end{pmatrix}
	\]
	Hence, $\Lambda = \RR \setminus [-1, 1]$, and, by \eqref{eq:33},
	\[
		\upsilon(x) = 
		\frac{|x|}{\pi \sqrt{4 x^2 - 4}}, \qquad x \in \Lambda
	\]
	which agrees with the formula \eqref{eq:66} (in view of Proposition \ref{prop:2}, the hypotheses of
	Theorem~\ref{thm:7} are satisfied).
\end{example}

\begin{example} 
	\label{ex:4}
	Let $N=2$. Suppose that $A$ has $N$-periodically modulated entries corresponding to
	\[
		\alpha_n \equiv 1, \qquad\text{and}\qquad
		\beta_n \equiv 0.
	\]
	Assume that \eqref{eq:30} is satisfied with
	\[
		s_n \equiv 0, \qquad\text{and}\qquad
		z_n = -\frac{(-1)^n+1}{2}.
	\]
	Then, by \cite[Proposition 9]{PeriodicIII}, we have
	\[
		\calR_0(x) = 
		\begin{pmatrix}
			0 & x \\
			-x + 1 & 0
		\end{pmatrix},
		\qquad\text{and}\qquad
		\calR_1(x) = 
		\begin{pmatrix}
			0 & x-1 \\
			-x & 0
		\end{pmatrix}
	\]
	Hence, $\Lambda = \RR \setminus [0, 1]$, and 
	\[
		\upsilon(x) = 
		\frac{|x| + |x-1|}{2 \pi \sqrt{4x^2 - 4x}} = \frac{|2x-1|}{2 \pi \sqrt{4x^2 - 4x}}, \qquad x \in \Lambda
	\]
	which agrees with the formula \eqref{eq:66} (in view of Proposition \ref{prop:2}, the hypotheses of
	Theorem~\ref{thm:7} are satisfied).
\end{example}

\section{Christoffel functions for periodic modulations}
\label{sec:7}
\begin{theorem}
	\label{thm:9}
	Let $N$ be a positive integer and $\sigma \in \{-1, 1\}$. Let $(a_n : n \in \NN_0)$ and $(b_n : n \in \NN_0)$ be 
	$N$-periodically modulated Jacobi parameters so that $\frakX_0(0) = \sigma \Id$. Suppose that 
	\begin{equation}
		\label{eq:79}
		\lim_{n \to \infty} (a_{n+N} - a_{n}) = 0.
	\end{equation}
	Set
	\[
		R_n = a_{n+N-1} \big( X_n - \sigma \Id \big).
	\]
	Let $K \subset \Lambda$ be a compact interval with non-empty interior, where $\Lambda$ is defined in \eqref{eq:32}.
	Suppose that
	\begin{equation} \label{eq:80}
		(X_n : n \in \NN), (R_n : n \in \NN) \in \calD_1^N \big( K, \GL(2, \RR) \big)
	\end{equation}
	Then
	\[
		\lim_{n \to \infty} \frac{1}{\rho_n} K_n(x, x) = \frac{\upsilon(x)}{\mu'(x)}
	\]
	uniformly with respect to $x \in K$, where $\upsilon$ is given by \eqref{eq:33} and
	\[
		\rho_n = \sum_{j = 0}^n \frac{\alpha_j}{a_j}.
	\]
\end{theorem}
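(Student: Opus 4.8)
The plan is to reduce the statement to the residue--class asymptotics of Theorem~\ref{thm:6} and then reassemble the $N$ partial sums, keeping careful track of the normalising quantities.

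\emph{Step 1: the hypotheses of Theorem~\ref{thm:6} hold in every residue class.} Since the parameters are $N$-periodically modulated we have $\lim_n a_n = \infty$, hence $\lim_n a_{nN+i-1} = \infty$ for each $i \in \{0,\dots,N-1\}$; assumption \eqref{eq:79} gives $\lim_n (a_{(n+1)N+i-1} - a_{nN+i-1}) = 0$; and \eqref{eq:80} means precisely that $(X_{nN+i} : n \in \NN)$ and $(R_{nN+i} : n \in \NN)$ lie in $\calD_1(K,\GL(2,\RR))$. In particular $(R_{nN+i})$ converges uniformly on $K$ to $\calR_i$, and since $\discr$ is a polynomial in the matrix entries, $\lim_n \discr R_{nN+i}(x) = \discr\calR_i(x)$, which is negative on $K\subset\Lambda$ by \eqref{eq:32}. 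Thus Theorem~\ref{thm:6} applies for every $i$: the measure $\mu$ is purely absolutely continuous on $K$ and, uniformly in $x\in K$,
\[
	\lim_{m\to\infty}\frac{1}{\rho_{i-1;m}}\,K_{i;m}(x,x)=\frac{1}{\pi\mu'(x)}\,c_i(x),
	\qquad c_i(x):=\frac{\bigl|[\calR_i(x)]_{2,1}\bigr|}{\sqrt{-\discr\calR_i(x)}}.
\]
Note that each $c_i$ is continuous on $K$, and $\mu'$ (by Theorem~\ref{thm:4}) is continuous and bounded away from $0$ and $\infty$ on the compact set $K$.

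\emph{Step 2: a common yardstick and reassembly.} Set $\tau_m=\sum_{k=0}^m a_{kN}^{-1}$. From \eqref{eq:79} one gets $a_{kN}\le c(k+1)$ exactly as in the proof of Corollary~\ref{cor:1}, so $\tau_m\to\infty$. Telescoping the ratios $a_{kN+j}/a_{kN+j-1}$, which by $N$-periodic modulation tend to $\alpha_j/\alpha_{j-1}$, gives $a_{kN+i}/a_{kN}\to\alpha_i/\alpha_0$ and $a_{kN+i-1}/a_{kN}\to\alpha_{i-1}/\alpha_0$ for each $i$ (with the convention $\alpha_{-1}=\alpha_{N-1}$), whence by the Stolz--Ces\`aro theorem
\[
	\lim_{m\to\infty}\frac{1}{\tau_m}\sum_{k=0}^m\frac{1}{a_{kN+i}}=\frac{\alpha_0}{\alpha_i},
	\qquad\text{and}\qquad
	\lim_{m\to\infty}\frac{\rho_{i-1;m}}{\tau_m}=\frac{\alpha_0}{\alpha_{i-1}}.
\]
Now write $n=mN+r$ with $0\le r<N$, so $m=\lfloor n/N\rfloor\to\infty$. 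Splitting $K_n(x,x)=\sum_{j=0}^n p_j^2(x)$ by residues and using $\tau_{m-1}/\tau_m\to1$ together with Step~1,
\[
	K_n(x,x)=\sum_{i=0}^r K_{i;m}(x,x)+\sum_{i=r+1}^{N-1}K_{i;m-1}(x,x)
	=\tau_m\Bigl(\frac{\alpha_0}{\pi\mu'(x)}\sum_{i=0}^{N-1}\frac{c_i(x)}{\alpha_{i-1}}+o(1)\Bigr)
\]
uniformly on $K$. The same residue splitting of $\rho_n=\sum_{j=0}^n \alpha_j a_j^{-1}$, using the $N$-periodicity of $(\alpha_j)$ and Step~2, yields $\rho_n/\tau_m\to\sum_{i=0}^{N-1}\alpha_i\cdot(\alpha_0/\alpha_i)=N\alpha_0$. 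Dividing the two displays and invoking the definition \eqref{eq:33} of $\upsilon$,
\[
	\lim_{n\to\infty}\frac{K_n(x,x)}{\rho_n}
	=\frac{1}{N\pi\mu'(x)}\sum_{i=0}^{N-1}\frac{1}{\alpha_{i-1}}\frac{\bigl|[\calR_i(x)]_{2,1}\bigr|}{\sqrt{-\discr\calR_i(x)}}
	=\frac{\upsilon(x)}{\mu'(x)}
\]
uniformly on $K$, as claimed.

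No new analytic difficulty arises here: Theorem~\ref{thm:6} (resting on Theorem~\ref{thm:5} and Lemma~\ref{lem:1}) does the heavy lifting. The delicate point is the bookkeeping of indices --- one must be scrupulous about the shift $\alpha_{i-1}$ versus $\alpha_i$, which enters because the per-residue normalisation $\rho_{i-1;n}$ of Theorem~\ref{thm:6} is built from $a_{jN+i-1}$ while $\rho_n$ carries the weight $\alpha_j$ --- and the verification that uniformity on $K$ survives. The latter is automatic: $\rho_n$, $\tau_m$, and all the limiting ratios are $x$-independent, the only $x$-dependence being confined to the ratios $K_{i;m}(x,x)/\rho_{i-1;m}$ controlled uniformly by Theorem~\ref{thm:6}, and $\mu'$ is bounded below on $K$. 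I would also record, before Step~2, the elementary fact $\tau_m\to\infty$ (equivalently $\sum_k a_{kN+i}^{-1}=\infty$), since every Stolz--Ces\`aro step above needs the denominators to diverge.
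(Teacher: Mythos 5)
Your proposal is correct and follows essentially the same route as the paper: apply Theorem~\ref{thm:6} in each residue class, then reassemble the $N$ partial kernels using Stolz--Ces\`aro comparisons of the normalising sums (the paper compares $\rho_{i'-1;k}$ directly to $\rho_{kN+i}$ via the limit $\rho_{i';j}/\rho_{jN+i}\to 1/(N\alpha_{i'})$, while you route both through the auxiliary yardstick $\tau_m$ --- a purely cosmetic difference). The index bookkeeping, the $O(1)$ boundary terms from the residue splitting, and the source of uniformity are all handled as in the paper.
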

\begin{proof}
	Let us fix a compact interval $K$ with non-empty interior contained in $\Lambda$. Consider
	$i \in \{0, 1, \ldots, N-1\}$. By \eqref{eq:79}, there is $c > 0$ such that for all $k \in \NN_0$,
	\[
		a_{kN+i-1} = a_{i-1} + \sum_{j = 1}^k \big( a_{jN+i-1} - a_{(j-1)N+i-1} \big) \leq c(k+1),
	\]
	hence
	\[
		\lim_{k \to \infty} \rho_{i-1; k} = \infty.
	\]
	Thus, Theorem \ref{thm:6} easily leads to
	\begin{equation}
		\label{eq:81}
		K_{i; n}(x, x) = \frac{1}{\pi \mu'(x)} \frac{|[\calR_i(x)]_{2,1}|}{\sqrt{-\discr \calR_i(x)}} \rho_{i-1; n}
		+E_{i; n}(x)
	\end{equation}
	where
	\[
		\lim_{n \to \infty} 
		\frac{1}{\rho_{i-1; n}} \sup_{x \in K} |E_{i; n}(x)| = 0.
	\]
	Next, by \cite[Proposition 3.7]{ChristoffelI}, for $n, n' \in \NN_0$,
	\[
		\lim_{j \to \infty} \frac{a_{jN+n'}}{a_{jN+n}} = \frac{\alpha_{n'}}{\alpha_n},
	\]
	thus, by the Stolz--Ces\'aro theorem, for each $i, i' \in \{0, 1, \ldots, N-1\}$,
	\begin{align}
		\nonumber
		\lim_{j \to \infty} \frac{\rho_{i'; j}}{\rho_{jN+i}} 
		&= 
		\lim_{j \to \infty} \frac{\frac{1}{a_{jN+i'}}}{\sum_{k = 1}^N \frac{\alpha_{i+k}}{a_{jN+i+k}}} \\
		\label{eq:78}
		&=
		\frac{1}{N \alpha_{i'}}.
	\end{align}
	Let us now consider $n = kN+i$ where $i \in \{0, 1, \ldots, N-1\}$. We write
	\[
		K_{kN+i}(x, x) 
		= \sum_{i' = 0}^{N-1} K_{i'; k}(x, x) + \sum_{i' = i+1}^{N-1} (K_{i'; k-1}(x, x) - K_{i'; k}(x, x)).
	\]
	Observe that by Theorem \ref{thm:5},
	\[
		\sup_{x \in K} \big|K_{i'; k-1}(x, x) - K_{i'; k}(x, x)\big|
		= \sup_{x \in K} p_{kN+i'}^2 (x) \leq c,
	\]
	hence, by \eqref{eq:81},
	\[
		\lim_{k \to \infty} \frac{1}{\rho_{kN+i}} K_{kN+i}(x, x)
		=
		\sum_{i' = 0}^{N-1}
		\frac{1}{\pi \mu'(x)} \frac{|[\calR_{i'}(x)]_{2,1}|}{\sqrt{-\discr \calR_{i'}(x)}}
		\cdot \lim_{k \to \infty} \frac{\rho_{i'-1; k}}{\rho_{kN+i}}.
	\]
	Using now \eqref{eq:78} and \eqref{eq:33}, we obtain
	\begin{align*}
		\lim_{k \to \infty} \frac{1}{\rho_{kN+i}} K_{kN+i}(x, x)
		&=
		\frac{1}{\mu'(x)} \cdot 
		\frac{1}{N \pi}
		\sum_{i' = 0}^{N-1}
		\frac{|[\calR_{i'}(x)]_{2,1}|}{\sqrt{-\discr \calR_{i'}(x)}} 
		\cdot \frac{1}{\alpha_{i'-1}} \\
		&=
		\frac{\upsilon(x)}{\mu'(x)},
	\end{align*}
	which completes the proof.
\end{proof}

\begin{remark} \label{rem:1}
If one assumes that
\[
	\bigg( \frac{\alpha_{n-1}}{\alpha_n} a_n - a_{n-1} : n \in \NN \bigg),
	\bigg( \frac{\beta_n}{\alpha_n} a_n - b_n : n \in \NN \bigg), 
	\bigg( \frac{1}{a_n} : n \in \NN \bigg) \in \calD_1^N(\RR),
\]
then condition \eqref{eq:80} is satisfied for any compact $K \subset \RR$, see \cite[Proposition 9]{PeriodicIII}.
\end{remark}

\subsection{Applications to Ignjatović conjecture}
In this section we show how Theorem \ref{thm:9} leads to the conjecture due to Ignjatović
\cite[Conjecture 1]{Ignjatovic2016}.
\begin{conjecture}[Ignjatović, 2016]
	\label{conj:1}
	Suppose that
	\begin{itemize}
		\item[($\calC_1$)]
		$\begin{aligned}[b]
			\lim_{n \to \infty} a_n = \infty
		\end{aligned}$;
		\item[($\calC_2$)]
		$\begin{aligned}[b]
			\lim_{n \to \infty} \Delta a_n = 0
		\end{aligned}$;
		\item[($\calC_3$)]
		There exist $n_0, m_0$ such that $a_{n+m} > a_n$ holds for all $n \geq n_0$ and all $m \geq m_0$;
		\item[($\calC_4$)]
		$\begin{aligned}[b]
		\sum_{n=0}^\infty \frac{1}{a_n} = \infty
		\end{aligned}$;
		\item[($\calC_5$)]
		There exists $\kappa > 1$ such that $\sum_{n=0}^\infty \frac{1}{a_n^\kappa} < \infty$;
		\item[($\calC_6$)]
		$\begin{aligned}[b]
		\sum_{n=0}^\infty \frac{|\Delta a_n|}{a_n^2} < \infty
		\end{aligned}$;
		\item[($\calC_7$)]
		$\begin{aligned}[b]
			\sum_{n=0}^\infty \frac{\big|\Delta^2 a_n \big|}{a_n} < \infty
		\end{aligned}$.
	\end{itemize}
	If
	\[
		-2 < \lim_{n \to \infty} \frac{b_n}{a_n} < 2,
	\]
	then for any $x \in \RR$, the limit
	\[
		\lim_{n \to \infty} \bigg( \sum_{j=0}^n \frac{1}{a_j} \bigg)^{-1} \sum_{j=0}^n p_j^2(x)
	\]
	exists and is positive.
\end{conjecture}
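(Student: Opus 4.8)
The plan is to show that $(\mathcal{C}_1)$--$(\mathcal{C}_7)$ together with $\beta:=\lim_n b_n/a_n\in(-2,2)$ confine the problem to the absolutely continuous regime $|\tr\frakX_1(0)|<2$, and then to evaluate the diagonal limit by an oscillatory averaging that runs parallel to the proof of Theorem~\ref{thm:6} but uses summation by parts in place of Lemma~\ref{lem:1}. Fixing $x\in\RR$, conditions $(\mathcal{C}_1)$--$(\mathcal{C}_2)$ give $a_{n-1}/a_n=1-\Delta a_{n-1}/a_n\to1$ and $(x-b_n)/a_n\to-\beta$, so $B_n(x)\to\frakX_1(0)=\bigl(\begin{smallmatrix}0&1\\-1&-\beta\end{smallmatrix}\bigr)$ with $\det\frakX_1(0)=1$ and $\discr\frakX_1(0)=\beta^2-4<0$. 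This is the $N=1$ periodic modulation with $\alpha_n\equiv1$, $\beta_n\equiv\beta$; the normalisation matches, since here $\rho_n=\sum_{j\le n}\alpha_j/a_j=\sum_{j\le n}1/a_j$. For the $(2,1)$-entry a direct computation yields $|\Delta[-a_{n-1}/a_n]|=O\!\bigl(|\Delta^2 a_n|/a_n+(|\Delta a_n|/a_n)^2\bigr)$, summable by $(\mathcal{C}_7)$ and $(\mathcal{C}_6)$ (the square being eventually $\le|\Delta a_n|/a_n^2$ as $|\Delta a_n|\to0$); provided the off-diagonal data are correspondingly regular, e.g. $\sum_n|\Delta(b_n/a_n)|<\infty$ (automatic when $b_n\equiv0$), the $(2,2)$-entry differences are summable too, giving $(B_n)\in\calD_1(K,\GL(2,\RR))$ on compacts. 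Finally $(\mathcal{C}_4)$ is the Carleman condition, so $\mu$ is determinate, and in this regime $\mu$ is purely absolutely continuous on all of $\RR$ with continuous strictly positive density; there are no spectral gaps and $\mu'(x)>0$ for every $x$.

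With the $\calD_1$ structure in hand, I would invoke the elliptic analogue of Theorem~\ref{thm:5} proved in \cite{SwiderskiTrojan2019, ChristoffelI}: there is a continuous $\eta$ with
\[
\sqrt{a_{n-1}}\,p_n(x)=\sqrt{\frac{2\,|[\frakX_1(0)]_{2,1}|}{\pi\,\mu'(x)\,\sqrt{-\discr\frakX_1(0)}}}\;\sin\bigl(\Theta_n(x)+\eta(x)\bigr)+o(1),
\]
uniformly on $K$, where $\Theta_n(x)=\sum_{j=1}^n\theta_j(x)$, $\theta_j(x)=\arccos\!\bigl(\tr X_j(x)/(2\sqrt{\det X_j(x)})\bigr)\to\arccos(-\beta/2)=:\theta_\ast\in(0,\pi)$, and the $o(1)$ is uniform in $x\in K$ (this is where $(\mathcal{C}_5)$ secures a uniform remainder). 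Abbreviating the amplitude by $A(x)^2$, squaring and summing, and using $(\mathcal{C}_4)$ to get $\rho_n\to\infty$, the uniform remainder contributes $o(\rho_n)$ and
\[
K_n(x,x)=A(x)^2\sum_{j=1}^n\frac1{a_{j-1}}\sin^2\!\bigl(\Theta_j(x)+\eta(x)\bigr)+o(\rho_n).
\]

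The crux is the averaging of $\sin^2=\tfrac12-\tfrac12\cos(2\,\cdot\,)$, for which it suffices to prove $\sum_{j\le n}a_{j-1}^{-1}\cos\bigl(2\Theta_j(x)+2\eta(x)\bigr)=O(1)$. Writing $E_j=e^{2i\Theta_j}$ one has $E_j=E_{j-1}e^{2i\theta_j}$ with $e^{2i\theta_j}\to e^{2i\theta_\ast}\neq1$, so summation by parts expresses $\sum_j a_{j-1}^{-1}E_j$ through the coefficients $d_j=a_{j-1}^{-1}\bigl(e^{2i\theta_{j+1}}-1\bigr)^{-1}$, whose increments obey $|\Delta d_j|\lesssim|\Delta(1/a_{j-1})|+a_{j-1}^{-1}\,|\Delta\theta_{j+1}|$. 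Both are summable: $\sum_j|\Delta(1/a_{j-1})|\lesssim\sum_j|\Delta a_j|/a_j^2<\infty$ by $(\mathcal{C}_6)$, while $\Delta\theta_j=O(\|\Delta X_j\|)$ (as $\sqrt{1-t_j^2}\to\sin\theta_\ast\neq0$ for $t_j=\tr X_j/(2\sqrt{\det X_j})$) gives $\sum_j a_{j-1}^{-1}|\Delta\theta_j|\lesssim\sum_j\bigl(|\Delta^2 a_j|/a_j+|\Delta a_j|/a_j^2\bigr)<\infty$ by $(\mathcal{C}_6)$--$(\mathcal{C}_7)$. Hence the weighted exponential sum is bounded, the $\cos$-part is $o(\rho_n)$, and the $\sin^2$-average is $\tfrac12$, so
\[
\lim_{n\to\infty}\frac1{\rho_n}K_n(x,x)=\frac{A(x)^2}{2}=\frac{|[\frakX_1(0)]_{2,1}|}{\pi\,\mu'(x)\,\sqrt{-\discr\frakX_1(0)}}=\frac{1}{\pi\,\mu'(x)\,\sqrt{4-\beta^2}},
\]
the $u=v=0$ specialisation of \eqref{eq:109}; it is finite and, since $\mu'(x)>0$ on all of $\RR$, strictly positive for every $x\in\RR$, proving the conjecture.

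The principal obstacle is twofold. First, one must upgrade the bare growth/regularity hypotheses $(\mathcal{C}_1)$--$(\mathcal{C}_7)$ into the downstream structural input: convergence $X_n(x)\to\frakX_1(0)$ with $\discr<0$, membership $(X_n)\in\calD_1(K,\GL(2,\RR))$, and the absence of gaps forcing $\mu'>0$ everywhere. Second, and more delicately, the conjecture imposes no regularity on $b_n$ beyond $b_n/a_n\to\beta$, so controlling the $(2,2)$-entry increments $\Delta(b_n/a_n)$ — needed both for $(X_n)\in\calD_1$ and for the summability of $a_{j-1}^{-1}|\Delta\theta_j|$ above — demands either a bounded-variation assumption on $b_n/a_n$ (valid in the motivating cases such as $b_n\equiv0$) or a more robust equidistribution argument insensitive to slowly varying phase perturbations. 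The hypothesis $\beta\in(-2,2)$ is decisive throughout: it forces $\theta_\ast\in(0,\pi)$, yielding simultaneously ellipticity (hence no gaps and $\mu'>0$) and the nonvanishing factor $e^{2i\theta_\ast}-1$ that drives the cancellation. If instead $|\beta|=2$ one lands on the soft edge of Theorems~\ref{thm:C}--\ref{thm:D}, where $\theta_j\to0$, Lemma~\ref{lem:1} replaces the summation-by-parts estimate, and for $x$ in the emerging spectral gap the normalised Christoffel function diverges — precisely the mechanism by which the conclusion fails once $(\mathcal{C}_2)$ (which rules out the period-$N$ oscillations producing $|\beta|=2$) is relaxed.
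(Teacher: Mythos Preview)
The paper does not prove this statement; it is recorded there as a \emph{conjecture}, and in the introduction the authors explicitly write that ``in general the conjecture is false''. What the paper does prove is Corollary~\ref{cor:2}, which establishes the conclusion under hypotheses that are different from (and in the relevant respects stronger than) $(\calC_1)$--$(\calC_7)$: in particular Corollary~\ref{cor:2} assumes $\big(b_n - q a_n : n \in \NN\big) \in \calD_1^N(\RR)$, i.e.\ summable total variation for the off-diagonal data, and even then the conclusion is stated only for $x$ in $\Lambda$, not for every $x\in\RR$.

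Your argument recognises this gap honestly, but does not close it. Every step that invokes $(X_n)\in\calD_1$ --- the asymptotic formula from \cite{SwiderskiTrojan2019,ChristoffelI}, the absolute continuity of $\mu$ with $\mu'>0$ on all of $\RR$, and the summability of $a_{j-1}^{-1}|\Delta\theta_j|$ in your summation-by-parts estimate --- requires control on $\Delta(b_n/a_n)$ which the hypotheses $(\calC_1)$--$(\calC_7)$ simply do not provide: those seven conditions constrain only $(a_n)$, and the sole assumption on $(b_n)$ is the bare limit $b_n/a_n\to\beta$. Your parenthetical ``provided the off-diagonal data are correspondingly regular'' is therefore not a technicality but the entire missing ingredient, and since the paper asserts the conjecture is false as stated, no such upgrade can be extracted from $(\calC_1)$--$(\calC_7)$ alone. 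What you have written is, in effect, a sketch of the $N=1$, $|\tr\frakX_1(0)|<2$ case of \cite[Corollary~4.16]{ChristoffelI} (cf.\ \eqref{eq:68}) rather than a proof of Conjecture~\ref{conj:1}.
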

Our results entail the following corollary.
\begin{corollary}
	\label{cor:2}
	Let $N$ be a positive integer. Suppose that
	\[
		\lim_{n \to \infty} \frac{a_{n-1}}{a_n} = 1, \qquad
		\lim_{n \to \infty} \frac{b_n}{a_n} = q, \qquad
		\lim_{n \to \infty} a_n = \infty, \qquad
		\lim_{n \to \infty} (a_{n+N} - a_n) = 0,
	\]
	for some
	\begin{equation} 
		\label{eq:67}
		q \in \big\{2 \cos(j \tfrac{\pi}{N}) : j = 1, 2, \ldots, N-1 \big\}.
	\end{equation}
	If
	\[
		\big( a_{n} - a_{n-1} : n \in \NN \big), 
		\big( b_n - q a_n : n \in \NN \big), 
		\bigg( \frac{1}{a_{n}} : n \in \NN \bigg) \in \calD_1^N(\RR),
	\]
	then
	\[
		\lim_{n \to \infty} 
		\bigg( \sum_{j=0}^n \frac{1}{a_j} \bigg)^{-1} \sum_{j=0}^n p_j^2(x) 
		= \frac{\upsilon(x)}{\mu'(x)},
	\]
	locally uniformly with respect to $x \in \Lambda$, where $\Lambda$ and $\upsilon$ are defined 
	in \eqref{eq:32} and \eqref{eq:33}, respectively.
\end{corollary}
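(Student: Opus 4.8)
The plan is to deduce the corollary directly from Theorem~\ref{thm:9} applied with the constant (hence $N$-periodic) sequences $\alpha_n \equiv 1$ and $\beta_n \equiv q$. With this choice the quantity $\rho_n$ of Theorem~\ref{thm:9} becomes $\sum_{j=0}^n \alpha_j/a_j = \sum_{j=0}^n 1/a_j$, and $K_n(x,x) = \sum_{j=0}^n p_j^2(x)$, so the conclusion of Theorem~\ref{thm:9} is precisely the claimed limit; thus it suffices to verify its hypotheses. First I would check that $(a_n)$ and $(b_n)$ are $N$-periodically modulated with these $\alpha_n,\beta_n$: condition (a) is the assumption $\lim_n a_n=\infty$, condition (b) is $\lim_n|a_{n-1}/a_n-1|=0$, and condition (c) is $\lim_n|b_n/a_n-q|=0$, all of which are among the hypotheses.

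The one genuinely computational step is to show $\frakX_0(0)=\sigma\Id$ for a suitable $\sigma\in\{-1,1\}$. With $\alpha_n\equiv1$, $\beta_n\equiv q$ one has $\frakX_0(0)=B^N$ where
\[
	B = \begin{pmatrix} 0 & 1 \\ -1 & -q \end{pmatrix},
\]
whose characteristic polynomial is $\lambda^2+q\lambda+1$. Since $q\in(-2,2)$, $B$ has two distinct eigenvalues $e^{\pm i\theta}$ with $\theta=\arccos(-q/2)\in(0,\pi)$, so $B$ is diagonalizable. Writing $q=2\cos(j\pi/N)$ with $j\in\{1,\dots,N-1\}$ gives $-q/2=\cos\big((N-j)\pi/N\big)$ and $(N-j)\pi/N\in(0,\pi)$, hence $\theta=(N-j)\pi/N$ and $e^{\pm iN\theta}=(-1)^{N-j}$. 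Therefore $B^N$ is diagonalizable with both eigenvalues equal to $(-1)^{N-j}$, i.e.\ $\frakX_0(0)=\sigma\Id$ with $\sigma=(-1)^{N-j}$.

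Next I would dispose of the regularity hypotheses of Theorem~\ref{thm:9}. The condition $\lim_n(a_{n+N}-a_n)=0$ is assumed outright. For the existence of the limits $\calR_i$ in \eqref{eq:31a} and for condition \eqref{eq:80}, I would invoke Remark~\ref{rem:1}: with $\alpha_n\equiv1$, $\beta_n\equiv q$ one has $\tfrac{\alpha_{n-1}}{\alpha_n}a_n-a_{n-1}=a_n-a_{n-1}$ and $\tfrac{\beta_n}{\alpha_n}a_n-b_n=-(b_n-qa_n)$, so the three sequences in Remark~\ref{rem:1} are, up to sign, exactly $(a_n-a_{n-1})$, $(b_n-qa_n)$, $(1/a_n)$, all assumed to lie in $\calD_1^N(\RR)$; hence (through \cite[Proposition 9]{PeriodicIII}) the limits $\calR_i$ exist and $(X_n),(R_n)\in\calD_1^N\big(K,\GL(2,\RR)\big)$ for every compact $K\subset\RR$.

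Finally, $\Lambda$ as defined in \eqref{eq:32} is open (in fact $\Lambda=\RR\setminus I$ for a compact interval $I$), so every point of $\Lambda$ is contained in a compact subinterval of $\Lambda$ with non-empty interior; applying Theorem~\ref{thm:9} on each such subinterval and using $\rho_n=\sum_{j=0}^n 1/a_j$ and $K_n(x,x)=\sum_{j=0}^n p_j^2(x)$ yields the limit $\upsilon(x)/\mu'(x)$ locally uniformly on $\Lambda$. I expect the only real work to be the eigenvalue computation of the second paragraph — correctly identifying $\sigma=(-1)^{N-j}$ and checking the diagonalizability of $B^N$ — together with the routine matching of the $\calD_1^N$ hypotheses with those of Remark~\ref{rem:1}.
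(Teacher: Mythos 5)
Your proposal is correct and follows essentially the same route as the paper: reduce to Theorem~\ref{thm:9} with $\alpha_n\equiv 1$, $\beta_n\equiv q$, verify $\frakX_0(0)=\sigma\Id$, and discharge the regularity hypotheses via Remark~\ref{rem:1}. The only (cosmetic) difference is that you compute $\sigma$ by diagonalizing $B$ with eigenvalues $e^{\pm i(N-j)\pi/N}$, whereas the paper uses the Chebyshev-polynomial formula for $B^N$; both yield $\sigma=(-1)^{N+j}=(-1)^{N-j}$.
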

\begin{proof}
	Let 
	\[
		\alpha_n \equiv 1, \quad \beta_n \equiv q.
	\]
	Observe that 
	\[
		\frakX_0(0) =
		\begin{pmatrix}
			0 & 1 \\
			-1 & -q
		\end{pmatrix}^N =
		\begin{pmatrix}
			0 & 1 \\
			-1 & \frac{-q/2 - 0}{1/2}
		\end{pmatrix}^N.
	\]
	Hence, by \cite[Lemma 3.2]{ChristoffelI}
	\[
		\frakX_0(0) = 
		\begin{pmatrix}
			-U_{N-2}(-\tfrac{q}{2}) & U_{N-1}(-\tfrac{q}{2}) \\
			-U_{N-1}(-\tfrac{q}{2}) & U_{N}(-\tfrac{q}{2})
		\end{pmatrix}
	\]
	where $(U_n : n \in \NN_0)$ is the sequence of Chebyshev polynomials of the second kind defined as
	\[
		U_n(x) = \frac{\sin \big( (n+1) \arccos(x) \big)}{\sin \big( \arccos(x) \big)}, \qquad x \in (-1,1)
	\]
	from which we readily derive
	\[
		\frakX_0(0) = \sigma \Id
	\]
	with $\sigma = (-1)^{N+j}$. Now, in view of Remark~\ref{rem:1}, the conclusion is a consequence of 
	Theorem~\ref{thm:9}.
\end{proof}

By using a different method the conclusion of Corollary~\ref{cor:2} for $q=0$, $s_n \equiv 0$ and
$z_n \equiv 0$ (cf. formula \eqref{eq:30}) has been proven in \cite[Corollary 3]{PeriodicII}. 

Let us recall that in \cite[Corollary 4.16]{ChristoffelI} there was considered the case when 
\eqref{eq:67} is \emph{not} satisfied. In fact, under some hypotheses it was shown that
\begin{equation} 
	\label{eq:68}
	\lim_{n \to \infty} 
	\bigg( \sum_{j=0}^n \frac{1}{a_j} \bigg)^{-1} \sum_{j=0}^n p_j^2(x) 
	= \frac{1}{\pi \sqrt{4 - q^2}} \frac{1}{\mu'(x)}
\end{equation}
locally uniformly with respect to $x \in \RR$. Let us stress that in the setup of Corollary~\ref{cor:1}
it is still possible to have \eqref{eq:68} locally uniformly with respect to $x \in \RR \setminus \{ 0 \}$, see Theorem~\ref{thm:2}. Nevertheless, as the next example demonstrates, it is \emph{not} 
the case that one always obtains \eqref{eq:68} provided that the left hand side exists.

\begin{example}
	Let
	\[
		a_n = \sqrt{n+1}, \qquad\text{and}\qquad
		b_n = \frac{1 - (-1)^n}{2}.
	\]
	Then
	\[
		\lim_{n \to \infty} 
		\bigg( \sum_{j=0}^n \frac{1}{a_j} \bigg)^{-1} \sum_{j=0}^n p_j^2(x) 
		= 
		\frac{|2x - 1|}{4 \pi \sqrt{x(x-1)}}
		\frac{1}{\mu'(x)}
	\]
	locally uniformly with respect to $x \in \RR \setminus [0,1]$. Indeed, since the hypothesis of Corollary \ref{cor:2}
	are satisfied for $N = 2$ and $q = 0$, the conclusion follows from Example \ref{ex:4}. 
\end{example}

\section{Universality limits of Christoffel--Darboux kernel}
\label{sec:8}

\begin{proposition}
	\label{prop:4}
	Let $N$ be a positive integer and $\sigma \in \{-1, 1\}$. Let $(a_n : n \in \NN_0)$ and $(b_n : n \in \NN_0)$ be 
	$N$-periodically modulated Jacobi parameters so that $\frakX_0(0) = \sigma \Id$. Suppose that for each 
	$i \in \{0, 1, \ldots, N-1 \}$ the limit
	\[
		\calR_i = \lim_{k \to \infty} a_{(k+1)N+i-1}(X_{kN+i} - \sigma \Id).
	\]	
	exists. Let
	\begin{equation}
		\label{eq:82}
		\theta_n(x) = \arccos\bigg(\frac{\tr X_n(x)}{2 \sqrt{\det X_n(x)}} \bigg),
	\end{equation}
	Then for each $i \in \{0, 1, \ldots, N-1\}$,
	\begin{equation} 
		\label{eq:72}
		\lim_{k \to \infty} \frac{a_{kN+i}}{\alpha_{i}} \big| \theta_{kN+i}'(x) \big| = N \pi \upsilon(x),
	\end{equation}
	and
	\begin{equation} 
		\label{eq:73}
		\lim_{k \to \infty}
		\frac{a_{kN+i}}{\alpha_{i}} \theta_{kN+i}''(x) = 
		-\alpha_{i-1} \frac{\tr \frakX''_i(0)}{\sqrt{-\discr \calR_i(x)}} 
		-\alpha_{i-1} \frac{2 \sigma \big( N \pi \upsilon(x) \big)^2}{\big( -\discr \calR_i(x) \big)^{3/2}}
	\end{equation}
	locally uniformly with respect to $x \in \Lambda$, where $\Lambda$ and $\upsilon$ are given by \eqref{eq:32} and \eqref{eq:33}, respectively.
\end{proposition}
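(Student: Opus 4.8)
The plan is to compute $\theta_{nN+i}'$ and $\theta_{nN+i}''$ in closed form and then pass to the limit, the key structural input being that $\det X_{nN+i}(x)=a_{nN+i-1}/a_{(n+1)N+i-1}$ does not depend on $x$. Writing $\tau_n=\tr X_n/\bigl(2\sqrt{\det X_n}\bigr)$, so that $\theta_n=\arccos\tau_n$ and $1-\tau_n^2=-\discr X_n/(4\det X_n)$, the $\det X_n$-factors cancel and one gets $\theta_{nN+i}'(x)=-(\tr X_{nN+i})'(x)/\sqrt{-\discr X_{nN+i}(x)}$, which, using $\tr X_{nN+i}=2\sigma+a_{(n+1)N+i-1}^{-1}\tr R_{nN+i}$ and $\discr X_{nN+i}=a_{(n+1)N+i-1}^{-2}\discr R_{nN+i}$, simplifies to
\[
	\theta_{nN+i}'=-\frac{(\tr R_{nN+i})'}{\sqrt{-\discr R_{nN+i}}},
	\qquad
	\theta_{nN+i}''=-\frac{(\tr R_{nN+i})''}{\sqrt{-\discr R_{nN+i}}}-\frac{(\tr R_{nN+i})'\,(\discr R_{nN+i})'}{2\,(-\discr R_{nN+i})^{3/2}}.
\]
I would also record the elementary identity $\det R_{nN+i}=a_{(n+1)N+i-1}\bigl(a_{nN+i-1}-a_{(n+1)N+i-1}-\sigma\tr R_{nN+i}\bigr)$ (from $\det(X_{nN+i}-\sigma\Id)=1-\sigma\tr X_{nN+i}+\det X_{nN+i}$), whence $(\discr R_{nN+i})'=\bigl(2\tr R_{nN+i}+4\sigma a_{(n+1)N+i-1}\bigr)\,(\tr R_{nN+i})'$. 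Letting $n\to\infty$ (the bracket is $\sim 4\sigma a_{(n+1)N+i-1}\to\pm\infty$ while $(\discr R_{nN+i})'\to(\discr\calR_i)'$) this forces $(\tr R_{nN+i})'\to 0$, so $\tr\calR_i$ is constant and $\discr\calR_i$ has degree $\le2$; more precisely $a_{(n+1)N+i-1}(\tr R_{nN+i})'(x)\to(\discr\calR_i)'(x)/(4\sigma)$ and, differentiating this relation once more, $a_{(n+1)N+i-1}(\tr R_{nN+i})''(x)\to(\discr\calR_i)''(x)/(4\sigma)$. Here the entries of $R_{nN+i}$ are polynomials of degree $\le N$ converging to those of $\calR_i$, so all these convergences are automatically uniform on compacts, and $\discr\calR_i$ stays bounded away from $0$ on compact subsets of $\Lambda$.

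With this, \eqref{eq:72} is immediate: $\tfrac{a_{nN+i}}{\alpha_i}\lvert\theta_{nN+i}'\rvert=\tfrac{a_{nN+i}}{\alpha_i}\lvert(\tr R_{nN+i})'\rvert/\sqrt{-\discr R_{nN+i}}$ tends to $N\pi\upsilon(x)$ locally uniformly by Theorem~\ref{thm:7} (formula \eqref{eq:65}). For \eqref{eq:73} I multiply the displayed formula for $\theta_{nN+i}''$ by $a_{nN+i}/\alpha_i$ and pass to the limit term by term, using $a_{(n+1)N+i-1}/a_{nN+i-1}\to1$ (because $\det X_{nN+i}\to1$) and $a_{nN+i}/a_{nN+i-1}\to\alpha_i/\alpha_{i-1}$, hence $a_{nN+i}/(\alpha_i a_{(n+1)N+i-1})\to1/\alpha_{i-1}$. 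Combined with the two rate statements above, the first term contributes $-\dfrac{\sigma\,(\discr\calR_i)''(x)}{4\alpha_{i-1}\sqrt{-\discr\calR_i(x)}}$ and the cross term contributes $-\dfrac{\sigma\bigl((\discr\calR_i)'(x)\bigr)^2}{8\alpha_{i-1}\,(-\discr\calR_i(x))^{3/2}}$; the cross term survives precisely because $(\tr R_{nN+i})'=O(a_{(n+1)N+i-1}^{-1})$ (this is where $(\tr\calR_i)'\equiv0$ is used), so that $\tfrac{a_{nN+i}}{\alpha_i}(\tr R_{nN+i})'$ has a finite nonzero limit rather than vanishing or diverging. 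It remains to rewrite the limit in the stated form: the factor $\bigl((\discr\calR_i)'\bigr)^2$ in the cross term is replaced via \eqref{eq:66} by a multiple of $(N\pi\upsilon)^2$, yielding the $\upsilon^2$-term, and the constant $(\discr\calR_i)''$ is identified with $\tr\frakX_i''(0)$ through the identity $(\discr\calR_i)''=4\sigma\alpha_{i-1}^2\,\tr\frakX_i''(0)$. I would prove this last identity by Taylor-expanding the genuinely periodic matrix $\frakX_i$ at $0$: from $\det\frakX_i\equiv1$ and $\frakX_i(0)=\sigma\Id$ one reads off $\tr\frakX_i'(0)=0$ and then $\discr\frakX_i'(0)=2\sigma\tr\frakX_i''(0)$ (a $2\times2$ Cayley--Hamilton computation), after which the relation of $\discr\calR_i$ to $\discr\frakX_i'(0)$ is obtained from the scaling/truncation identities of Section~\ref{sec:6}, this being the second-order analogue of \cite[formulas~(3.1)--(3.2)]{ChristoffelI} used in the proof of Theorem~\ref{thm:7}.

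The main obstacle is the asymptotic bookkeeping in the limit of $\theta_{nN+i}''$: one must know not just that $(\tr R_{nN+i})'$ and $(\tr R_{nN+i})''$ vanish but their exact rate $a_{(n+1)N+i-1}^{-1}$ together with the correct constants, since it is the cancellation of the diverging factor $a_{nN+i}/\alpha_i$ against this rate that produces a finite answer. An equivalent but likely cleaner route, matching the pattern of Theorem~\ref{thm:7}, is to run the whole computation for the truncated parameters of \eqref{eq:42a}--\eqref{eq:42b}, where $\theta$ is expressed through the genuinely periodic transfer matrices $\frakX_j^{L}$, and then send the truncation level to infinity via Proposition~\ref{prop:1}; this also furnishes the algebraic identity $(\discr\calR_i)''=4\sigma\alpha_{i-1}^2\tr\frakX_i''(0)$ without appealing to the stronger hypothesis \eqref{thm:2:eq:1}.
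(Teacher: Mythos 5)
Your treatment of \eqref{eq:72} and the overall strategy for \eqref{eq:73} coincide with the paper's: both differentiate the closed form of $\theta_n$ exploiting that $\det X_n$ is independent of $x$, obtain $\theta_{nN+i}'=-(\tr R_{nN+i})'/\sqrt{-\discr R_{nN+i}}$, invoke \eqref{eq:65} for the first derivative, and pass to the limit term by term in $\theta''$ using $a_{(n+1)N+i-1}/a_{nN+i}\to\alpha_{i-1}/\alpha_i$. The one place you genuinely deviate is the $(\tr R)''$-term: the paper simply quotes \cite[Corollary 3.10]{ChristoffelI} for $\lim_k (a_{kN+i}/\alpha_i)^2\,\tr X_{kN+i}''=\tr\frakX_i''(0)$, whereas you extract the rate $a_{(n+1)N+i-1}(\tr R_{nN+i})''\to(\discr\calR_i)''/(4\sigma)$ internally from the correct identity $(\discr R)'=(2\tr R+4\sigma a_{(n+1)N+i-1})(\tr R)'$, the passage to derivatives being legitimate since everything is a polynomial of degree at most $N$ converging uniformly on an interval. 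This is cleaner and more self-contained, but it transfers the entire burden onto the identity $(\discr\calR_i)''=4\sigma\alpha_{i-1}^2\tr\frakX_i''(0)$, which you assert and sketch but never prove; that identity \emph{is} the content the paper imports from \cite{ChristoffelI}, so your argument is incomplete without it. Note also that the identity presupposes $(\discr\calR_i)''$ is constant, i.e.\ $\discr\calR_i$ is quadratic; your stated reason (``$\tr\calR_i$ is constant, so $\discr\calR_i$ has degree $\le 2$'') does not give this, since $\discr\calR_i=(\tr\calR_i)^2-4\det\calR_i$ and a priori $\det\calR_i$ has degree up to $N$. The correct reason is that $\calR_i(x)$ is \emph{affine} in $x$ (the $x$-dependence of $X_{nN+i}$ enters only through $x/a_j$, so all quadratic and higher terms are annihilated by the single factor $a_{(n+1)N+i-1}$). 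Your Taylor-expansion route is viable --- $\tr\frakX_i'(0)=0$ does follow formally from differentiating $\det\frakX_i\equiv 1$ at a point where $\frakX_i=\sigma\Id$, and a second differentiation gives $\tr\frakX_i''(0)=-2\sigma\det\frakX_i'(0)$ --- but you still must relate $\calR_i'(0)$ to $\alpha_{i-1}\frakX_i'(0)$, which is exactly where the work outsourced to \cite{ChristoffelI} and \cite{PeriodicIII} lives.

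A separate point in your favour: your value for the cross term is correct and exposes a typo in the statement. Substituting \eqref{eq:66} into $-\sigma\bigl((\discr\calR_i)'\bigr)^2/\bigl(8\alpha_{i-1}(-\discr\calR_i)^{3/2}\bigr)$ gives $-2\sigma\alpha_{i-1}(N\pi\upsilon)^2/\sqrt{-\discr\calR_i}$, i.e.\ the exponent in the second term of \eqref{eq:73} (and in \eqref{eq:76}) should be $1/2$, not $3/2$; one can confirm this on Example~\ref{ex:3}, where the direct limit of $a_{2n}\theta_{2n}''$ is $-2(4x^2-4)^{-1/2}+8x^2(4x^2-4)^{-3/2}$. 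So you should not claim that your expression ``yields the $\upsilon^2$-term'' of the printed formula verbatim --- it yields the corrected one, and had you actually carried out the substitution you would have noticed the discrepancy.
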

\begin{proof}
	Let us fix $i \in \{ 0, 1, \ldots, N-1 \}$. Since
	\begin{equation}
		\label{eq:107}
		\det X_{kN+i}(x) = \frac{a_{kN+i-1}}{a_{(k+1)N+i-1}},
	\end{equation}
	we conclude that
	\[
		\lim_{k \to \infty} \det X_{kN+i}(x) = \det \big( \sigma \Id \big) = 1.
	\]
	The chain rule applied to \eqref{eq:82} leads to
	\begin{align} 
		\nonumber
		\theta_{kN+i}'(x) 
		&=
		-\Bigg(4 - \bigg( \frac{\tr X_{kN+i}(x)}{\sqrt{\det X_{kN+i}(x)}} \bigg)^2 \Bigg)^{-1/2}
		\frac{\tr X_{kN+i}'(x)}{\sqrt{\det X_{kN+i}(x)}} \\
		\label{eq:83}
		&= 
		-\frac{\tr X_{kN+i}'(x)}{\sqrt{-\discr X_{kN+i}(x)}},
	\end{align}
	thus
	\[
		\theta_{kN+i}'(x) = -\frac{\tr R_{kN+i}'(x)}{\sqrt{-\discr R_{kN+i}(x)}}
	\]
	where we have set
	\[
		R_n = a_{n+N-1}(X_n - \sigma \Id).
	\]
	Hence, the formula \eqref{eq:72} is a consequence of \eqref{eq:65}.

	Next, by taking derivative of \eqref{eq:83}, we obtain
	\begin{equation} 
		\label{eq:77}
		\theta_{kN+i}''(x) 
		= 
		-\frac{\tr X_{kN+i}''(x)}{\sqrt{-\discr X_{kN+i}(x)}} - 
		\frac{\big(\tr X_{kN+i}'(x)\big)^2 \tr X_{kN+i}(x)}{\big( -\discr X_{kN+i}(x) \big)^{3/2}}.
	\end{equation}
	Since
	\[
		\frac{a_{kN+i}}{\alpha_i}
		\frac{\tr X_{kN+i}''(x)}{\sqrt{-\discr X_{kN+i}(x)}} = 
		\alpha_i \frac{a_{(k+1)N+i-1}}{a_{kN+i}}
		\frac{\big( \tfrac{a_{kN+i}}{\alpha_i}\tr X_{kN+i}''(x)\big)^2}{\sqrt{-\discr R_{kN+i}(x)}},
	\]
	by \cite[Corollary 3.10]{ChristoffelI}, we get
	\begin{equation} 
		\label{eq:75}
		\lim_{k \to \infty} 
		\frac{a_{kN+i}}{\alpha_i} 
		\frac{\tr X_{kN+i}''(x)}{\sqrt{-\discr X_{kN+i}(x)}} 
		=
		\alpha_{i-1} \frac{\tr \frakX''_i(0)}{\sqrt{-\discr \calR_i(x)}}.
	\end{equation}
	Similarly,
	\[
		\frac{a_{kN+i}}{\alpha_i}
		\frac{\big(\tr X_{kN+i}'(x)\big)^2 \tr X_{kN+i}(x)}{\big( -\discr X_{kN+i}(x) \big)^{3/2}}
		=
		\alpha_i \frac{a_{(k+1)N+i-1}}{a_{kN+i}}
		\frac{\big( \tfrac{a_{kN+i}}{\alpha_i} \tr R_{kN+i}'(x)\big)^2 \tr X_{kN+i}(x)}
		{\big( -\discr R_{kN+i}(x) \big)^{3/2}},
	\]
	and, by \eqref{eq:65},
	\begin{equation} \label{eq:76}
		\lim_{k \to \infty}
		\frac{a_{kN+i}}{\alpha_i}
		\frac{\big(\tr X_{kN+i}'(x)\big)^2 \tr X_{kN+i}(x)}{\big( -\discr X_{kN+i}(x) \big)^{3/2}}
		=
		\alpha_{i-1}
		\frac{2 \sigma \big( N \pi \upsilon(x) \big)^2}
		{\big( -\discr \calR_{i}(x) \big)^{3/2}}.
	\end{equation}
	Finally, combining \eqref{eq:75} and \eqref{eq:76} with \eqref{eq:77} we obtain \eqref{eq:73}.
	This completes the proof.
\end{proof}

\begin{lemma}
	\label{lem:2}
	Let $(\gamma_k : k \in \NN_0)$ be a sequence of positive numbers such that
	\[
		\sum_{k = 0}^\infty \gamma_k = \infty, \qquad\text{and}\qquad
		\lim_{k \to \infty} \frac{\gamma_{k-1}}{\gamma_k} = 1.
	\]
	Assume that $(\xi_k : k \in \NN_0)$ is a sequence of continuous functions on some open subset $U \subset \RR^d$,
	with values in $(0, 2\pi)$. Suppose that there is $\psi: U \rightarrow (0, 2\pi)$ such that
	\[
		\lim_{n \to \infty} \frac{\xi_n(x)}{\gamma_n} = \psi(x)
	\]
	locally uniformly with respect to $x \in U$. Let $(r_n : n \in \NN)$ be a sequence of positive numbers such that
	\[
		\lim_{n \to \infty} r_n = \infty.
	\]
	For $x \in U$ and $a, b \in \RR$, we set
	\[
		x_n = x + \frac{a}{r_n}, \qquad y_n = x + \frac{b}{r_n}.
	\]
	Then for each compact subset $K \subset U$, $L > 0$ and any function $\sigma: U \rightarrow \RR$,
	\[
		\lim_{n \to \infty} \sum_{k = 0}^n \frac{\gamma_k}{\sum_{j = 0}^n \gamma_j}
		\cos\Big(\sum_{j = 0}^k \xi_j(x_n) + \xi_j(y_n) + \sigma(x_n) + \sigma(y_n)\Big) = 0
	\]
	uniformly with respect to $x \in K$, and $a, b \in [-L, L]$.
\end{lemma}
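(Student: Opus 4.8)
The plan is to exploit that the phase
\[
	\Phi_k = \sum_{j=0}^k \big(\xi_j(x_n) + \xi_j(y_n)\big) + \sigma(x_n) + \sigma(y_n)
\]
winds around infinitely often: its increments $\delta_k := \Phi_k - \Phi_{k-1} = \xi_k(x_n) + \xi_k(y_n)$ satisfy $\delta_k/\gamma_k \to \psi(x_n) + \psi(y_n)$, which stays bounded away from $0$, while $\sum_k \gamma_k = \infty$; so the $\gamma$-weighted average of $\cos\Phi_k$ should behave like $\Gamma_n^{-1}\int_0^{\Phi_n}\cos t\,\ud t$, which is $O(1/\Gamma_n)$. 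First I would reduce to a fixed compact neighbourhood: since $K$ is a compact subset of the open set $U$, choose a compact neighbourhood $\bar K$ of $K$ with $\bar K$ contained in $U$; as $r_n \to \infty$, for all $n$ large the points $x_n = x + a/r_n$ and $y_n = x + b/r_n$ lie in $\bar K$, uniformly for $x \in K$ and $a,b \in [-L,L]$. The limit $\psi$, being a locally uniform limit of the continuous functions $\xi_n/\gamma_n$, is continuous, hence $0 < m := \min_{\bar K}\psi$. The condition $\gamma_{k-1}/\gamma_k \to 1$ gives, via the Stolz--Ces\`aro theorem, $\gamma_n/\Gamma_n \to 0$ and $\Gamma_n^{-1}\sum_{k=0}^n \gamma_k \eta_k \to 0$ for every null sequence $(\eta_k)$; these will absorb all error terms.

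Fix $\varepsilon > 0$. By the locally uniform convergence there are $N_0$, chosen uniformly in the parameters, and a null sequence $(\varepsilon_k)$ with $|\xi_k(z)/\gamma_k - \psi(z)| \le \varepsilon_k$ for all $k \ge N_0$ and $z \in \bar K$; so for $k \ge N_0$, writing $\Psi_n := \psi(x_n) + \psi(y_n) \ge 2m$, we get $|\delta_k/\gamma_k - \Psi_n| \le 2\varepsilon_k$. The head is negligible uniformly: $\Gamma_n^{-1}\sum_{k<N_0}\gamma_k|\cos\Phi_k| \le \Gamma_{N_0}/\Gamma_n \to 0$. For the tail I would use the exact identity
\[
	\sin\Phi_k - \sin\Phi_{k-1} = 2\sin(\delta_k/2)\cos(\Phi_k - \delta_k/2),
\]
observing that for $k \ge N_0$ the factor $\gamma_k/\big(2\sin(\delta_k/2)\big)$ is bounded uniformly and converges to $1/\Psi_n$; here the hypotheses on the ranges of $\xi_k$, $\psi$ and on $\gamma_{k-1}/\gamma_k$ are exactly what keeps $\sin(\delta_k/2)$ from being small relative to $\gamma_k$. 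Combining this with $\cos(\Phi_k - \delta_k/2) = \cos\Phi_k + O(\delta_k)$ and $\delta_k/\gamma_k = \Psi_n + O(\varepsilon_k)$ gives, for $k \ge N_0$,
\[
	\gamma_k\cos\Phi_k = \frac{1}{\Psi_n}\big(\sin\Phi_k - \sin\Phi_{k-1}\big) + r_k, \qquad |r_k| \le c\,\gamma_k(\varepsilon_k + \delta_k).
\]

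Summing over $N_0 \le k \le n$, the main term telescopes to $\Psi_n^{-1}(\sin\Phi_n - \sin\Phi_{N_0-1})$, of modulus at most $2/m$, so after division by $\Gamma_n \to \infty$ it tends to $0$; and $\Gamma_n^{-1}\sum_{k}|r_k| \le c\,\Gamma_n^{-1}\sum_k\gamma_k(\varepsilon_k + \delta_k) \to 0$ by the Stolz--Ces\`aro facts above (since $\varepsilon_k \to 0$ and $\delta_k \to 0$). All the estimates are uniform because $N_0$, $(\varepsilon_k)$, $m$ and the implicit constants were chosen uniformly in $x \in K$ and $a,b \in [-L,L]$, which would complete the proof.

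The step I expect to be the main obstacle is the uniform control of the error terms in the presence of the \emph{diagonal} dependence on $n$: the functions $\xi_k$ are evaluated at the moving points $x_n$, $y_n$, while the summation length is also $n$, and no bound is assumed on how the individual $\xi_k$ vary with $k$. The first difficulty is resolved precisely by using that the convergence $\xi_k/\gamma_k \to \psi$ is \emph{locally} uniform, which allows freezing a compact neighbourhood $\bar K$ containing all $x_n, y_n$ and choosing $N_0$ and $(\varepsilon_k)$ there; the second is the reason one should telescope against $\sin\Phi_k$ rather than attempt a naive Abel summation with partial sums of $\cos\Phi_k$ — the latter are not uniformly bounded — so that the unruly $k$-dependence of $\delta_k$ enters only through the slowly varying, uniformly controlled ratio $\delta_k/\gamma_k$.
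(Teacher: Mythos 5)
Your argument takes a genuinely different route from the paper. The paper reduces the statement to Lemma \ref{lem:1}: it absorbs the moving points into the parameters by writing $x_n = x + (aR/r_n)/R$ with $aR/r_n \in [-L,L]$ for $n$ large, treats $(x,a,b)$ as a point of a compact subset of $\RR^{d+2}$, applies Lemma \ref{lem:1} with $f = \cos$ to the functions $\tilde\xi_j(x,a,b) = \xi_j(x+a/R) + \xi_j(x+b/R)$, and then uses $\tilde\Xi_n \geq c\,\Gamma_n \to \infty$ to kill the resulting integral average $\tilde\Xi_n^{-1}\int_0^{\tilde\Xi_n}\cos t\,\ud t$. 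You instead run a direct Abel/telescoping argument against $\sin\Phi_k$, using $\sin\Phi_k - \sin\Phi_{k-1} = 2\sin(\delta_k/2)\cos(\Phi_k-\delta_k/2)$ and the slowly varying ratio $\delta_k/\gamma_k$; the diagonal dependence on $n$ is handled, as in the paper, by freezing a compact neighbourhood $\bar K$ and choosing all thresholds and constants uniformly there. Your route is self-contained and in fact re-proves the relevant special case of Lemma \ref{lem:1}; the paper's route is shorter because that work has already been done.

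There is one assertion in your write-up that is false as stated: that ``the hypotheses on the ranges of $\xi_k$, $\psi$ and on $\gamma_{k-1}/\gamma_k$ are exactly what keeps $\sin(\delta_k/2)$ from being small relative to $\gamma_k$.'' Since each $\xi_k$ only takes values in $(0,2\pi)$, the increment $\delta_k = \xi_k(x_n)+\xi_k(y_n)$ lies in $(0,4\pi)$, so $\sin(\delta_k/2)$ can vanish (e.g.\ $\delta_k$ near $2\pi$), and your comparison $\gamma_k/(2\sin(\delta_k/2)) \to 1/\Psi_n$ requires $\delta_k \to 0$, i.e.\ $\gamma_k \to 0$ --- which is \emph{not} among the stated hypotheses ($\gamma_{k-1}/\gamma_k \to 1$ does not imply it; take $\gamma_k \equiv 1$). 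Indeed, without $\gamma_k \to 0$ the lemma itself fails: with $\gamma_k \equiv 1$, $\xi_k \equiv \pi$, $\psi \equiv \pi$, $a=b=0$ and $\sigma \equiv 0$ the averaged cosines are identically $1$. To be fair, the paper's own proof has the same tacit assumption, since it invokes Lemma \ref{lem:1}, whose hypotheses include $\lim_n \gamma_n = 0$; that condition holds in every application ($\gamma_j$ comparable to $1/a_{jN+i}$). So your proof is correct once you add the hypothesis $\gamma_k \to 0$ and justify the lower bound $\sin(\delta_k/2) \geq c\,\gamma_k$ from $\delta_k \to 0$ rather than from the range conditions; as written, that one step does not follow from what you cite.
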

\begin{proof}
	Let us fix a compact set $K$, and let $N \in \NN$ be such that $r_n \geq R$, for $n \geq N$. For
	$(x, a, b) \in U \times (-2 L, 2L)^2$, we set
	\[
		\tilde{\xi}_0(x, a, b) = \xi_0\bigg(x+\frac{a}{R}\bigg) + \xi_0\bigg(x+ \frac{b}{R}\bigg)
		+\sigma\bigg(x+\frac{a}{R}\bigg) + \sigma\bigg(x+\frac{b}{R}\bigg),
	\]
	and
	\[
		\tilde{\xi}_j(x, a, b) = \xi_j\bigg(x+\frac{a}{R}\bigg) + \xi_j\bigg(x+ \frac{b}{R}\bigg),
		\quad\text{and}\quad
		\tilde{\psi}(x, a, b) = \psi\bigg(x+\frac{a}{R}\bigg) + \psi\bigg(x + \frac{b}{R}\bigg).
	\]
	Thus
	\begin{equation}
		\label{eq:87}
		\lim_{j \to \infty} \frac{1}{\gamma_j} \tilde{\xi}_j(x, a, b)= \tilde{\psi}(x, a, b).
	\end{equation}
	In view of Lemma \ref{lem:1} we obtain
	\[
		\lim_{n \to \infty}
		\sum_{k = 0}^n
		\frac{\gamma_k}{\sum_{j = 0}^n \gamma_j} 
		\cos\Big( \sum_{j = 0}^k \tilde{\xi}_j(x, a, b) \Big)
		=\lim_{n \to \infty}
		\frac{1}{\tilde{\Xi}_n(x, a, b)} \int_0^{\tilde{\Xi}_n(x, a, b)}  \cos(t) \ud t
	\]
	where
	\[
		\tilde{\Xi}_n(x, a, b) = \sum_{j = 0}^n \tilde{\xi}_j(x, a, b).
	\]
	By \eqref{eq:87}, there is $c > 0$ such that for all $x \in K$, and $a, b \in [-L, L]$,
	\[
		\tilde{\xi}_j(x, a, b) \geq c \gamma_j.
	\]
	Hence,
	\[
		\frac{1}{\tilde{\Xi}_n(x, a, b)} \int_0^{\tilde{\Xi}_n(x, a, b)} \cos(t) \ud t
		\leq
		\frac{1}{\tilde{\Xi}_n(x, a, b)},
	\]
	which implies that
	\[
		\lim_{n \to \infty}
	    \sum_{k = 0}^n
		\frac{\gamma_k}{\sum_{j = 0}^n \gamma_j}
		\cos\Big( \sum_{j = 0}^k \tilde{\xi}_j(x, a, b) \Big)
		=
		\lim_{n \to \infty} \frac{1}{\tilde{\Xi}_n(x, a, b)} \int_0^{\tilde{\Xi}_n(x, a, b)}  \cos(t) \ud t
		=
		0,
	\]
	uniformly with respect to $x \in K$, $a, b \in [-L, L]$.
\end{proof}

\begin{theorem}
	\label{thm:11}
	Assume that $(\xi_j : j \in \NN_0)$ is a sequence of $\calC^2(U)$ functions with values in $(0, 2\pi)$ such that
	for each compact set $K \subset U$ there are functions $\xi : U \rightarrow (0, \infty)$ and
	$\psi: U \rightarrow (0, \infty)$, and $c > 0$ so that
	\begin{enumerate}[(a)]
	\item 
		$\begin{aligned}[b] 
			\lim_{n \to \infty} \sup_{x \in K}{\big|\gamma_n^{-1} \cdot \xi_n(x) - \xi(x)\big|} = 0,
		\end{aligned}$
	\item
		$\begin{aligned}[b]
			\lim_{n \to \infty} \sup_{x \in K}{\big| \gamma_n^{-1} \cdot \xi_n'(x) - \psi(x) \big|} =0,
		\end{aligned}$
	\item
		\label{eq:89}
		$\begin{aligned}[b]
			\sup_{n \in \NN} \sup_{x \in K}{\big|\gamma_n^{-1} \cdot \xi_n''(x)\big|} \leq c,
		\end{aligned}$
	\end{enumerate}
	where $(\gamma_k : k \in \NN_0)$ is a sequence of positive numbers such that
	\[
		\sum_{k = 0}^\infty \gamma_k = \infty, \qquad\text{and}\qquad
		\lim_{k \to \infty} \frac{\gamma_{k-1}}{\gamma_k} = 1.
	\]
	For $x \in U$ and $a, b \in [-L, L]$, we set
	\[
		x_n = x + \frac{a}{\sum_{k = 0}^n \gamma_k}, \qquad
		y_n = x + \frac{b}{\sum_{k = 0}^n \gamma_k}.
	\]
	Then for any continuous function $\sigma: U \rightarrow \RR$,
	\begin{align*}
		\lim_{n \to \infty} \sum_{k = 0}^n \frac{\gamma_k}{\sum_{j = 0}^n \gamma_j}
		\sin\Big(\sum_{j = 0}^k \xi_j(x_n)+\sigma(x_n)\Big)
		\sin\Big(\sum_{j = 0}^k \xi_j(y_n)+\sigma(y_n)\Big)
		=
		\frac{1}{2} \sinc \big((b-a)\psi(x) \big)
	\end{align*}
	locally uniformly with respect to $x \in U$, and $a, b \in \RR$.
\end{theorem}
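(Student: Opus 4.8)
The plan is to expand the product of sines by $\sin A\sin B=\tfrac12\big(\cos(A-B)-\cos(A+B)\big)$ and to treat the two resulting averaged sums separately. Write $\Gamma_n=\sum_{j=0}^n\gamma_j$, and set
\[
	\Sigma_n^{\mp}=\sum_{k=0}^n\frac{\gamma_k}{\Gamma_n}\cos\Big(\sum_{j=0}^k\big(\xi_j(x_n)\mp\xi_j(y_n)\big)+\sigma(x_n)\mp\sigma(y_n)\Big),
\]
so that the left-hand side of the claimed identity equals $\tfrac12(\Sigma_n^{-}-\Sigma_n^{+})$. The sum $\Sigma_n^{+}$ tends to $0$ uniformly on $K\times[-L,L]^2$ by Lemma~\ref{lem:2}, applied with $r_n:=\Gamma_n\to\infty$ (the hypothesis of Lemma~\ref{lem:2} on the phase increments is supplied by~(a), the role of $\psi$ there being played by the limit $\xi$ of $\gamma_n^{-1}\xi_n$, which is positive and continuous on compacts). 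Hence everything reduces to showing
\[
	\lim_{n\to\infty}\Sigma_n^{-}=\sinc\big((a-b)\psi(x)\big)
\]
uniformly for $x\in K$, $a,b\in[-L,L]$, where $\sinc(t)=\sin(t)/t$ and $\sinc(0)=1$; since $\sinc$ is even, this gives the limit $\tfrac12\sinc((a-b)\psi(x))=\frac{\sin((b-a)\psi(x))}{2(b-a)\psi(x)}$, as required (and the case $a=b$ is covered by continuity of $\sinc$).

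To analyse $\Sigma_n^{-}$ I first linearize the phase. Fix a compact $K'\subset U$ containing a neighbourhood of $K$. For $n$ large the points $x_n,y_n$ and the Taylor intermediate points lie in $K'$, and by hypothesis~(c) and Taylor's theorem $\xi_j(x_n)-\xi_j(x)=\xi_j'(x)\tfrac{a}{\Gamma_n}+O\big(\gamma_j L^2\Gamma_n^{-2}\big)$ with a uniform implied constant; subtracting the analogue for $y_n$ and summing over $0\le j\le k$ yields
\[
	\sum_{j=0}^k\big(\xi_j(x_n)-\xi_j(y_n)\big)=\frac{a-b}{\Gamma_n}\sum_{j=0}^k\xi_j'(x)+O\big(\Gamma_n^{-1}\big),
\]
using $\Gamma_k\le\Gamma_n$. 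Next, $\sum_{j=0}^k\xi_j'(x)=\psi(x)\Gamma_k+\sum_{j=0}^k\gamma_j\big(\gamma_j^{-1}\xi_j'(x)-\psi(x)\big)$; given $\delta>0$, choose by hypothesis~(b) an index $J$ with $\sup_K|\gamma_j^{-1}\xi_j'-\psi|<\delta$ for $j\ge J$, and split the last sum at $J$: its $\Gamma_n^{-1}$-multiple is at most $M\Gamma_{J-1}\Gamma_n^{-1}+\delta$ for all $k\le n$, where $M=\sup_j\sup_K|\gamma_j^{-1}\xi_j'-\psi|<\infty$, so it is $o(1)$ uniformly. Using in addition that $\sigma$ is uniformly continuous on $K'$ and $|x_n-y_n|\le 2L\Gamma_n^{-1}\to0$, the phase of the $k$-th summand of $\Sigma_n^{-}$ equals $(a-b)\psi(x)\Gamma_k/\Gamma_n+\varepsilon_{k,n}$ with $\sup_{k\le n}\sup_{x\in K,\,a,b\in[-L,L]}|\varepsilon_{k,n}|\to0$. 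Since $|\cos(u+\varepsilon)-\cos u|\le|\varepsilon|$ and the weights $\gamma_k/\Gamma_n$ sum to $1$, we obtain $\Sigma_n^{-}=\sum_{k=0}^n\tfrac{\gamma_k}{\Gamma_n}\cos\big((a-b)\psi(x)\tfrac{\Gamma_k}{\Gamma_n}\big)+o(1)$ uniformly.

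It remains to recognise the last sum as a Riemann sum. With $c=(a-b)\psi(x)$, so $|c|\le 2L\sup_K\psi$, write $\gamma_k\cos(c\Gamma_k/\Gamma_n)=\int_{\Gamma_{k-1}}^{\Gamma_k}\cos(c\Gamma_k/\Gamma_n)\,\ud s$ (with $\Gamma_{-1}:=0$) and compare with $\int_0^{\Gamma_n}\cos(cs/\Gamma_n)\,\ud s=\Gamma_n\int_0^1\cos(cu)\,\ud u$; the mean value theorem applied termwise gives
\[
	\Big|\Gamma_n^{-1}\sum_{k=0}^n\gamma_k\cos(c\Gamma_k/\Gamma_n)-\int_0^1\cos(cu)\,\ud u\Big|\le|c|\,\Gamma_n^{-2}\sum_{k=0}^n\gamma_k^2 .
\]
From $\gamma_{n-1}/\gamma_n\to1$ and $\sum\gamma_n=\infty$ one gets $\gamma_n/\Gamma_{n-1}\to0$ (for each $M$, the terms $\gamma_{n-1},\dots,\gamma_{n-M}$ are eventually $\ge\tfrac12\gamma_n$, hence $\Gamma_{n-1}\ge\tfrac{M}{2}\gamma_n$), whence by the Stolz--Cesàro theorem $\Gamma_n^{-2}\sum_{k=0}^n\gamma_k^2=\gamma_n\big/(\Gamma_n+\Gamma_{n-1})\to0$. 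As the error bound above is uniform in $x\in K$ and $a,b\in[-L,L]$, it follows that $\Gamma_n^{-1}\sum_{k=0}^n\gamma_k\cos(c\Gamma_k/\Gamma_n)\to\int_0^1\cos(cu)\,\ud u=\sinc(c)$ uniformly. Combining with the previous paragraph gives $\Sigma_n^{-}\to\sinc((a-b)\psi(x))$, which completes the proof.

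The step I expect to be the main obstacle is the uniform phase linearization of the third paragraph: the $k$-th summand depends on the two \emph{coupled} indices $k\le n$ through $\sum_{j\le k}(\xi_j(x_n)-\xi_j(y_n))$, and controlling this uniformly over all $k\le n$ simultaneously forces one to combine the second-order bound~(c)—which makes it possible to expand about the fixed point $x$ rather than the moving points $x_n,y_n$—with the splitting-at-a-fixed-index device that compensates for the non-uniformity of the convergence $\gamma_j^{-1}\xi_j'\to\psi$ at small $j$. Everything else is then a routine reduction to $\int_0^1\cos(cu)\,\ud u$ together with Lemma~\ref{lem:2} for the non-resonant part.
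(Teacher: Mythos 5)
Your proof is correct and follows essentially the same route as the paper: the product-to-sum decomposition, the application of Lemma~\ref{lem:2} to kill the non-resonant cosine sum, and the Taylor linearization of the phase via hypothesis (c) all coincide with the paper's argument. The only difference is in the last step, where the paper invokes Lemma~\ref{lem:3} with $f(t)=\cos((b-a)t)$ and $\Xi_k=\sum_{j\le k}\xi_j'$, whereas you first replace $\Gamma_n^{-1}\sum_{j\le k}\xi_j'(x)$ by $\psi(x)\Gamma_k/\Gamma_n$ via the splitting-at-$J$ device and then evaluate the explicit Riemann sum directly — a valid inline substitute for that lemma.
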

\begin{proof}
	We write
	\begin{align*}
		2 \cdot 
		\sin\Big(\sum_{j = 0}^k \xi_j(x) + \sigma(x) \Big) 
		\sin\Big(\sum_{j = 0}^k \xi_j(y) + \sigma(y) \Big)
		&=
		\cos\Big(\sum_{j = 0}^k\big(\xi_j(x) - \xi_j(y)\big) + \big(\sigma(x) - \sigma(y)\big)\Big) \\
		&\phantom{=}-
		\cos\Big(\sum_{j = 0}^k\big(\xi_j(x) + \xi_j(y)\big) + \big(\sigma(x) + \sigma(y)\big)\Big).
	\end{align*}
	By Lemma \ref{lem:2}, we conclude that
	\[
		\lim_{n \to \infty} 
		\sum_{k = 0}^n \frac{\gamma_k}{\sum_{j = 0}^n \gamma_j}
		\cos\Big(\sum_{j = 0}^k \big(\xi_j(x_n) + \xi_j(y_n)\big) + \big(\sigma(x_n) + \sigma(y_n)\big) \Big)
		=0.
	\]
	We next write 
	\begin{align*}
		\cos\Big(\sum_{j = 0}^k \big(\xi_j(x)-\xi_j(y)\big) + \big(\sigma(x)-\sigma(y)\big)\Big)
		&=
		\cos\Big(\sum_{j = 0}^k \big(\xi_j(x) - \xi_j(y) \big)\Big)\cos\big(\sigma(y) - \sigma(x)\big) \\
		&\phantom{=}-
		\sin\Big(\sum_{j = 0}^k \big(\xi_j(x) - \xi_j(y) \big)\Big)\sin\big(\sigma(y) - \sigma(x)\big),
	\end{align*}
	thus, it is enough to prove that
	\[
		\lim_{n \to \infty}
		\sum_{k = 0}^n \frac{\gamma_k}{\sum_{j = 0}^n \gamma_j}
		\cos\Big(\sum_{j = 0}^k \big(\xi_j(y_n) - \xi_j(x_n)\big)\Big) 
		=
		\sinc \big((b-a)\psi(x) \big)
	\]
	locally uniformly with respect to $x \in U$ and $a, b \in \RR$. Since (see, e.g., \cite[Claim 5.10]{ChristoffelI})
	\[
		\Big|
		\xi_j(y_n) - \xi_j(x_n) - (b-a) \xi_j'(x) \Big(\sum_{\ell = 0}^n \gamma_\ell\Big)^{-1}
		\Big|
		\leq
		c \Big(\sum_{\ell = 0}^n \gamma_\ell\Big)^{-2}
		\sup_{u \in K}{|\xi_j''(u)|},
	\]
	we obtain
	\begin{align*}
		&\Big|
		\cos \Big(\sum_{j = 0}^k \big(\xi_j(x_n) - \xi_j(y_n)\big)\Big)
		-
		\cos\Big((b-a)\Big(\sum_{\ell = 0}^n \gamma_\ell\Big)^{-1} \sum_{j = 0}^k \xi'_j(x) \Big)
		\Big|\\
		&\qquad\qquad\leq
		\sum_{j = 0}^k \Big|\xi_j(y_n) - \xi_j(x_n) - (b-a) \xi_j'(x) \Big(\sum_{j = 0}^n \gamma_\ell\Big)^{-1} \Big| \\
		&\qquad\qquad\leq
		c \Big(\sum_{\ell = 0}^n \gamma_\ell\Big)^{-2} \sum_{j = 0}^k \sup_{u \in K}{|\xi_j''(u)|}.
	\end{align*}
	Consequently,
	\begin{align*}
		&
		\bigg|
		\sum_{k = 0}^n \frac{\gamma_k}{\sum_{j = 0}^n \gamma_j}
		\bigg(\cos\Big(\sum_{j = 0}^k \xi_j(x_n)-\xi_j(y_n) \Big)
		-
		\cos\Big((b-a)\Big(\sum_{\ell = 0}^n \gamma_\ell \Big)^{-1} \sum_{j = 0}^k \xi'_j(x)\Big)
		\bigg)
		\bigg| \\
		&\qquad\qquad\leq
		c
		\Big( \sum_{\ell = 0}^n \gamma_\ell\Big)^{-3}
		\sum_{k = 0}^n \gamma_k \sum_{j = 0}^k \sup_{u \in K}{|\xi''_j(u)|}.
	\end{align*}
	In view of the Stolz--Ces\`aro theorem,
	\[
		\lim_{n \to \infty}	\frac{\gamma_n}{\sum_{j = 0}^n \gamma_j}
		=
		\lim_{n \to \infty} \frac{\gamma_n - \gamma_{n-1}}{\gamma_n} = 0,
	\]
	thus, by repeated application of the Stolz--Ces\`aro theorem we arrive at
	\begin{align*}
		\lim_{n \to \infty} \Big(\sum_{\ell = 0}^n \gamma_\ell \Big)^{-3} \sum_{k = 0}^n \gamma_k \sum_{j = 0}^k
		\sup_{u \in K}{|\xi''_j(u)|}
		&=
		\frac{1}{3} \lim_{n \to \infty} \Big(\sum_{\ell = 0}^n \gamma_\ell \Big)^{-2} 
		\sum_{j = 0}^n \sup_{u \in K}{|\xi''_j(u)|} \\
		&=
		\frac{1}{6} \lim_{n \to \infty} \gamma_n^{-1} \Big(\sum_{\ell = 0}^n \gamma_\ell \Big)^{-1} 
		\sup_{u \in K}{|\xi''_n(u)|} = 0
	\end{align*}
	where the last equality follows by \eqref{eq:89}. Now, our task is to show that
	\[
		\lim_{n \to \infty}
		\sum_{k = 0}^n \frac{\gamma_k}{\sum_{\ell = 0}^n \gamma_\ell} 
		\cos\Big((b-a) \Big(\sum_{j = 0}^n \gamma_\ell\Big)^{-1} \sum_{j = 0}^k \xi'_j(x)\Big)
		=
		\sinc\big((b-a)\psi(x) \big)
	\]
	locally uniformly with respect to $x \in U$ and $a, b \in \RR$. At this point we apply Lemma \ref{lem:3}
	to get
	\begin{align*}
		\lim_{n \to \infty}
		\sum_{k = 0}^n \frac{\gamma_k}{\sum_{\ell = 0}^n \gamma_\ell}
		\cos\Big((b-a) \Big(\sum_{j = 0}^n \gamma_\ell\Big)^{-1} \sum_{j = 0}^k \xi'_j(x)\Big)
		&=
		\frac{1}{\psi(x)}
		\int_0^{\psi(x)} \cos((b-a) t) \ud t \\
		&=
		\sinc\big((b-a) \psi(x)\big),
	\end{align*}
	and the theorem follows.
\end{proof}

\begin{theorem}
	\label{thm:10}
	Let $N$ be a positive integer and $\sigma \in \{-1, 1\}$. Let $(a_n : n \in \NN_0)$ and $(b_n : n \in \NN_0)$ be 
	$N$-periodically modulated Jacobi parameters so that $\frakX_0(0) = \sigma \Id$. Suppose that 
	\[
		\lim_{n \to \infty} (a_{n+N} - a_{n}) = 0
	\]
	and for each $i \in \{0, 1, \ldots, N-1 \}$ the limit
	\[
		\calR_i = \lim_{n \to \infty} R_{nN+i}(x)
	\]	
	exists where 
	\[
		R_n = a_{n+N-1} \big( X_n - \sigma \Id \big).
	\]
	Let $K \subset \Lambda$ be a compact interval with non-empty interior, where $\Lambda$ is defined 
	in \eqref{eq:32}. If 
	\[
		(X_n : n \in \NN), (R_n : n \in \NN) \in \calD_1^N \big( K, \GL(2, \RR) \big),
	\]	
	then
	\[
		\lim_{n \to \infty}
		\frac{1}{\rho_n} K_n\bigg(x + \frac{u}{\rho_n}, x + \frac{v}{\rho_n} \bigg)
		=
		\frac{\upsilon(x)}{\mu'(x)}
		\sinc \big( (u-v) \pi \upsilon(x) \big)
	\]
	locally uniformly with respect to $x \in \Lambda$ and $u, v \in \RR$, where 
	\[
		\rho_n = \sum_{j = 0}^n \frac{\alpha_j}{a_j},
	\]
	and $\upsilon$ is defined in \eqref{eq:33}.
\end{theorem}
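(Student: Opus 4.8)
The plan is to reduce, via a residue‑class decomposition and the polynomial asymptotics of Theorem~\ref{thm:5}, the scaling limit of $K_n$ to a sum of $N$ oscillatory sums of exactly the shape treated by Theorem~\ref{thm:11}. Throughout, fix a compact interval $K\subset\Lambda$ satisfying the hypotheses, a compact $\tilde K\subset\operatorname{int}(K)$, and $L>0$; for $x\in\tilde K$ and $u,v\in[-L,L]$ put $x_n=x+u/\rho_n$, $y_n=x+v/\rho_n$. From $\lim_n(a_{n+N}-a_n)=0$ one gets $a_{kN+i}\le c(k+1)$, hence the Carleman condition holds and $\rho_n\to\infty$; in particular $x_n,y_n\in K$ for $n$ large, uniformly in $x,u,v$. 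By Theorem~\ref{thm:5} the measure $\mu$ is purely absolutely continuous on $K$ with continuous positive density $\mu'$, and for each $i$ there are $M$, a continuous amplitude $A_i$ with $A_i(x)^2=\tfrac{2|[\calR_i(x)]_{2,1}|}{\pi\mu'(x)\sqrt{-\discr\calR_i(x)}}$, a continuous $\eta_i$, and remainders with $\sup_K|E_{jN+i}|\to0$ such that $\sqrt{a_{(j+1)N+i-1}}\,p_{jN+i}(x)=A_i(x)\sin\bigl(\eta_i(x)+\sum_{\ell=M+1}^{j}\theta^i_\ell(x)\bigr)+E_{jN+i}(x)$, where $\theta^i_\ell(x)=\arccos\bigl(\tr X_{\ell N+i}(x)/(2\sqrt{\det X_{\ell N+i}(x)})\bigr)$.

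Next I would write $K_n(x_n,y_n)=\sum_{\ell=0}^{n}p_\ell(x_n)p_\ell(y_n)$ and group the terms by residue class modulo $N$; the at most $N$ ``boundary'' terms, each bounded on $K$ by $\tfrac12(p_\ell(x_n)^2+p_\ell(y_n)^2)\le c$, are negligible after division by $\rho_n\to\infty$. In class $i$, inserting the asymptotics gives
\[
p_{jN+i}(x_n)p_{jN+i}(y_n)=\frac{A_i(x_n)A_i(y_n)}{a_{(j+1)N+i-1}}\sin\bigl(\Theta^i_j(x_n)\bigr)\sin\bigl(\Theta^i_j(y_n)\bigr)+\text{error},\qquad \Theta^i_j(x)=\eta_i(x)+\sum_{\ell=M+1}^{j}\theta^i_\ell(x),
\]
where the error, multiplied by $1/a_{(j+1)N+i-1}$ and summed against $1/\rho_n$, tends to $0$ by the Stolz--Ces\`aro theorem (using $\sum_j 1/a_{(j+1)N+i-1}\lesssim\rho_n$ and $\sup_K|E_{jN+i}|\to0$), and the finitely many $j<M$ terms contribute $O(1/\rho_n)$. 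Since $A_i$ is continuous and $x_n,y_n\to x$, one has $A_i(x_n)A_i(y_n)\to A_i(x)^2$.

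The heart of the matter is thus $\tfrac1{\rho_n}\sum_j\tfrac1{a_{(j+1)N+i-1}}\sin(\Theta^i_j(x_n))\sin(\Theta^i_j(y_n))$. Put $\gamma_j=1/a_{(j+1)N+i-1}$, $\Gamma_k=\sum_{j\le k}\gamma_j$; then $\sum\gamma_j=\infty$, $\gamma_{j-1}/\gamma_j\to1$, and, by the Stolz--Ces\`aro theorem together with $a_{n-1}/a_n\to\alpha_{n-1}/\alpha_n$ and $N$‑periodicity of $\alpha$, $\rho_n/\Gamma_k\to N\alpha_{i-1}$, where $k\sim n/N$ is the top block index of class $i$. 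Writing $x_n=x+a_n^{(i)}/\Gamma_k$ and $y_n=x+b_n^{(i)}/\Gamma_k$ with $a_n^{(i)}\to u/(N\alpha_{i-1})$, $b_n^{(i)}\to v/(N\alpha_{i-1})$, I would invoke Theorem~\ref{thm:11} with this $\gamma_j$, with $\xi_j$ equal to $\theta^i_j$ (or to $\pi-\theta^i_j$ when $\sigma=-1$, and up to an overall sign which is immaterial because the conclusion is even in $\psi$), and with the phase shift $\pm\eta_i$. Its three hypotheses hold: (a) $\gamma_j^{-1}\xi_j\to\tfrac12\sqrt{-\discr\calR_i}$ uniformly on $K$ by Proposition~\ref{prop:3}; (b) $\gamma_j^{-1}\xi_j'$ converges uniformly on $K$ to a nonvanishing multiple of $\upsilon$ by Proposition~\ref{prop:4}, formula~\eqref{eq:72}, together with $a_{(j+1)N+i-1}/a_{jN+i}\to\alpha_{i-1}/\alpha_i$; (c) $\sup_j\sup_K|\gamma_j^{-1}\xi_j''|<\infty$ by Proposition~\ref{prop:4}, formula~\eqref{eq:73}. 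As the conclusion of Theorem~\ref{thm:11} is locally uniform in the perturbation parameters, I may feed it the convergent sequences $a_n^{(i)},b_n^{(i)}$; the shift $\pm\eta_i$ washes out through the Riemann--Lebesgue step built into that theorem (Lemma~\ref{lem:2}). This computes $\lim_n$ of the class‑$i$ oscillatory sum as $\tfrac1{N\alpha_{i-1}}\cdot\tfrac{\sin((u-v)\upsilon(x))}{2(u-v)\upsilon(x)}$, the constant being pinned down by Propositions~\ref{prop:3}--\ref{prop:4} and the identities of Section~\ref{sec:6}.

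Summing over $i$ and using the elementary identity $\tfrac1N\sum_{i=0}^{N-1}A_i(x)^2/\alpha_{i-1}=2\upsilon(x)/\mu'(x)$ — immediate from the formula for $A_i$ and the definition~\eqref{eq:33} of $\upsilon$, and in any case forced by the diagonal case Theorem~\ref{thm:9} — yields
\[
\lim_{n\to\infty}\frac1{\rho_n}K_n(x_n,y_n)=\frac{\sin((u-v)\upsilon(x))}{2(u-v)\upsilon(x)}\cdot\frac1N\sum_{i=0}^{N-1}\frac{A_i(x)^2}{\alpha_{i-1}}=\frac{\upsilon(x)}{\mu'(x)}\,\frac{\sin\bigl((u-v)\upsilon(x)\bigr)}{(u-v)\upsilon(x)},
\]
with all the $o(1)$'s uniform for $x\in\tilde K$ and $u,v\in[-L,L]$, which is the asserted local uniformity. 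I expect the main obstacle to be the application of Theorem~\ref{thm:11}: verifying condition (c) really needs the second‑order asymptotics of Proposition~\ref{prop:4}, and one must take care that the perturbation parameters $a_n^{(i)},b_n^{(i)}$ drift with $n$ instead of being fixed — so the locally‑uniform‑in‑$(a,b)$ form of Theorem~\ref{thm:11} is indispensable. The remaining difficulty is purely bookkeeping: the residue‑class index shifts, the cancellation of $\alpha$‑factors in $\rho_n\sim N\alpha_{i-1}\Gamma_k$, and keeping every error term uniform in $(x,u,v)$.
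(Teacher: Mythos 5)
Your proposal follows essentially the same route as the paper's proof: decompose $K_n$ into residue classes modulo $N$, insert the sine asymptotics of Theorem~\ref{thm:5}, control the errors and boundary terms by the Stolz--Ces\`aro theorem, evaluate each class-$i$ oscillatory sum via Theorem~\ref{thm:11} with hypotheses checked through Propositions~\ref{prop:3} and~\ref{prop:4}, and finally sum over $i$ using \eqref{eq:78} and the definition \eqref{eq:33} of $\upsilon$. The details you flag as delicate (the drifting perturbation parameters, hence the need for uniformity in $(a,b)$ in Theorem~\ref{thm:11}, and the second-order bound from \eqref{eq:73} for hypothesis (c)) are exactly the points the paper's argument relies on, so the proposal is correct and matches the paper's proof.
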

\begin{proof}
	Let $K$ be a compact interval with non-empty interior contained in $\Lambda$ and let $L > 0$. We select a
	compact interval $\tilde{K} \subset \Lambda$ containing $K$ in its interior. There is $n_0 > 0$ such that
	for all $x \in K$, $n \geq n_0$, $i \in \{0, 1, \ldots, N-1\}$, and $u \in [-L, L]$,
	\[
		x + \frac{u}{\rho_{n N + i}}, x + \frac{u}{N \alpha_i \rho_{i; n}} \in \tilde{K}.
	\]
	Given $x \in K$ and $u, v \in [-L, L]$, we set
	\begin{align*}
		x_{i; n} &= x + \frac{u}{N \alpha_i \rho_{i; n}}, \qquad x_{nN+i} = x + \frac{u}{\rho_{nN+i}}, \\
		y_{i; n} &= x + \frac{v}{N \alpha_i \rho_{i; n}}, \qquad y_{nN+i} = x + \frac{v}{\rho_{nN+i}}.
	\end{align*}
	In view of Remark \ref{rem:1},
	\[
		(X_{jN+i} : j \in \NN), (R_{jN+i} : j \in \NN) \in \calD_1\big(K, \GL(2, \RR)\big).
	\]
	Hence, by Theorem \ref{thm:5}, there are $c > 0$ and $M \in \NN$ such that for all $x, y \in K$, and $k \geq M$,
	\[
		\begin{aligned}
		&a_{(k+1)N+i-1} p_{kN+i}(x) p_{kN+i}(y) \\
		&\qquad\qquad= \frac{2}{\pi}
		\sqrt{\frac{|[\calR_i(x)]_{2,1}]|}{\mu'(x) \sqrt{-\discr \calR_i(x)}}}
		\sqrt{\frac{|[\calR_i(y)]_{2,1}|}{\mu'(y) \sqrt{-\discr \calR_i(y)}}} \\
		&\qquad\qquad\phantom{=}\times
		\sin\Big(\sum_{j=M+1}^k \theta_{jN+i}(x) + \eta_i(x) \Big)
		\sin\Big(\sum_{j=M+1}^k \theta_{jN+i}(y) + \eta_i(y) \Big)
		+
		E_{kN+i}(x, y)
		\end{aligned}
	\]
	where
	\[
		\sup_{x, y \in K} |E_{kN+i}(x, y)|
		\leq
		c\sum_{j = k}^\infty \sup_K \|X_{(j+1)N+i} - X_{jN+i}\| + \sup_K \|R_{(j+1)N+i} - R_{jN+i}\|.
	\]
	Therefore, we obtain
	\[
		\begin{aligned}
		&
		\sum_{k = M}^n
		p_{kN+i}(x) p_{kN+i}(y)
		=
		\frac{2}{\pi} 
		\sqrt{\frac{|[\calR_i(x)]_{2,1}]|}{\mu'(x) \sqrt{-\discr \calR_i(x)}}}
		\sqrt{\frac{|[\calR_i(y)]_{2,1}|}{\mu'(y) \sqrt{-\discr \calR_i(y)}}} \\
		&\qquad\qquad\phantom{=}\times
		\sum_{k = M}^n
		\frac{1}{a_{(k+1)N+i-1}} 
		\sin\Big(\sum_{j=M+1}^k \theta_{jN+i}(x) + \eta_i(x) \Big)
		\sin\Big(\sum_{j=M+1}^k \theta_{jN+i}(y) + \eta_i(y) \Big) \\
		&\qquad\qquad\phantom{=}+
		\sum_{k = M}^n
		\frac{1}{a_{(k+1)N+i-1}} E_{kN+i}(x, y).
		\end{aligned}
	\]
	Observe that by the Stolz--Ces\'aro theorem,
	\[
		\lim_{n \to \infty} \frac{1}{\rho_{i-1; n}} \sum_{k = M+1}^n \frac{1}{a_{(k+1)N+i-1}} E_{kN+i}(x, y)
		=
		\lim_{n \to \infty} \frac{a_{nN+i-1}}{a_{(n+1)N+i-1}} E_{nN+i}(x, y) = 0.
	\]
	In view of Proposition \ref{prop:4}, we can apply Theorem \ref{thm:11} with
	\[
		\xi_j(x) = \theta_{jN+i}(x),
		\qquad
		\gamma_j = \frac{N \alpha_{i-1}}{a_{(j+1)N+i-1}},
		\qquad\text{and}\qquad
		|\psi(x)| = \pi \upsilon(x).
	\]
	Therefore, for any $i' \in \{0, 1, \ldots, N-1\}$, as $n$ tends to infinity
	\begin{align*}
		\frac{1}{N \alpha_{i-1} \rho_{i-1; n}}
		\sum_{k = M}^n
		\frac{N \alpha_{i-1}}{a_{(k+1)N+i-1}}
		&\sin\Big(\sum_{j=M+1}^k \theta_{jN+i}(x_{nN+i'}) + \eta_i(x_{nN+i'}) \Big) \\
		&\times
		\sin\Big(\sum_{j=M+1}^k \theta_{jN+i}(y_{nN+i'}) + \eta_i(y_{nN+i'}) \Big)
	\end{align*}
	approaches to
	\[
		\sinc\big((v-u) \pi \upsilon(x) \big)
	\]
	uniformly with respect to $x \in K$ and $u, v \in [-L, L]$. Moreover,
	\begin{align*}
		\lim_{n \to \infty} 
		\frac{|[\calR_i(x_{nN+i'})]_{2,1}]|}{\mu'(x_{nN+i'}) \sqrt{-\discr \calR_i(x_{nN+i'})}}
		&=
		\lim_{n \to \infty}
		\frac{|[\calR_i(y_{nN+i'})]_{2,1}]|}{\mu'(y_{nN+i'}) \sqrt{-\discr \calR_i(y_{nN+i'})}} \\
		&=
		\frac{|[\calR_i(x)]_{2,1}]|}{\mu'(x) \sqrt{-\discr \calR_i(x)}}.
	\end{align*}
	Hence,
	\begin{equation}
		\label{eq:90}
		\lim_{n \to \infty} 
		\frac{1}{\rho_{i-1; n}}
		K_{i; n}(x_{nN+i'}, y_{nN+i'}) 
		=
		\sinc\big((v-u)\pi \upsilon(x) \big)
		\cdot
		\frac{|[\calR_i(x)]_{2,1}]|}{\pi \mu'(x) \sqrt{-\discr \calR_i(x)}}.
	\end{equation}
	Finally, we write
	\[
		K_{nN+i'}(x, y) = \sum_{i = 0}^{N-1} K_{i; n}(x, y) 
		+ \sum_{i = i'+1}^{N-1} \big(K_{i; n-1}(x, y) - K_{i; n}(x, y)\big).
	\]
	Observe that
	\[
		\sup_{x, y \in K}{\big|K_{i; n-1}(x, y) - K_{i; n}(x, y)\big|} =
		\sup_{x, y \in K}{|p_{nN+i}(x) p_{nN+i}(y)|} \leq c,
	\]
	thus, by \eqref{eq:90} and \eqref{eq:78},
	\begin{align*}
		\lim_{n \to \infty} \frac{1}{\rho_{nN+i'}} K_{nN+i'}(x_{nN+i'}, y_{nN+i'})
		&=
		\lim_{n \to \infty} \sum_{i = 0}^{N-1} \frac{1}{\rho_{i-1; n}} K_{nN+i}(x_{nN+i'} , y_{nN+i'}) \cdot 
		\frac{\rho_{i-1; n}}{\rho_{nN+i'}} \\
		&=
		\frac{1}{\mu'(x)}
		\sinc\big((v-u)\pi \upsilon(x) \big)
		\frac{1}{N \pi}
		\sum_{i = 0}^{N-1}
		\frac{|[\calR_{i}(x)]_{2,1}]|}{\sqrt{-\discr \calR_{i}(x)}} \cdot \frac{1}{\alpha_{i-1}}.
	\end{align*}
	Hence, by \eqref{eq:33},
	\[
		\lim_{n \to \infty} \frac{1}{\rho_{nN+i'}} K_{nN+i'}(x_{nN+i'}, y_{nN+i'})
		=
		\frac{\upsilon(x)}{\mu'(x)} \sinc\big((v-u) \pi \upsilon(x) \big),
	\]
	and the theorem follows.
\end{proof}

\begin{bibliography}{jacobi}
	\bibliographystyle{amsplain}
\end{bibliography}

\end{document}